\newtheorem{thm}{Theorem}[section]
\newtheorem{lem}[thm]{Lemma}
\newtheorem{prop}[thm]{Proposition}
\theoremstyle{definition}
\newtheorem{defn}[thm]{Definition}
\theoremstyle{remark}
\newtheorem{rem}[thm]{Remark}
\numberwithin{equation}{section}
\newcommand{\blue}[1]{{\color{black
}#1}}
\newcommand{\red}[1]{{\color{black
}#1}}
\crefname{assumption}{assumption}{assumptions}
\newcommand{\green}[1]{{\color{black}#1}}
\newcommand{\magent}[1]{{\color{black}#1}}
\crefname{equation}{}{}
\crefname{enumi}{}{}
\crefname{figure}{Figure}{Figure}
\crefname{subsection}{subsection}{subsections}
\crefname{lemma}{Lemma}{Lemma}
\crefname{proposition}{Proposition}{Proposition}
\numberwithin{equation}{section}
\numberwithin{figure}{section}
\definecolor{vargreen}{rgb}{0.0, 0.5, 0.0}
\begin{document}
\title[Lotka Volterra PDE systems]{Boundary and distributed optimal control for a  population dynamics PDE model  with 
 discontinuous  in time Galerkin 
FEM schemes
}%
\author{Efthymios N. Karatzas}%
\address{School of Mathematics, Aristotle University of Thessaloniki, Thessaloniki 15780, Greece}%
\email{ekaratza@math.auth.gr}%

\thanks{}%
\subjclass{}%
\keywords{Discontinuous Time-Stepping Schemes, Finite Element Approximations, Lotka-Volterra,  Dirichlet distributed Control, Robin boundary Control, System of Parabolic equations.}%

\begin{abstract}
We consider fully discrete finite element approximations for a semilinear optimal control system of partial differential equations in two cases: for
distributed and Robin boundary control. 
The ecological predator-prey optimal control model is approximated by conforming finite element methods mimicking the spatial part, while a discontinuous Galerkin method is used for the time discretization. We investigate the sensitivity of the solution distance from the target function, in cases with smooth and rough initial data. We employ low, and higher-order polynomials in time and space whenever proper regularity is present. The approximation schemes considered are with and without control constraints, driving efficiently the system to desired states realized using 
non-linear gradient methods.
\end{abstract}
\maketitle
\section{Introduction}\label{sec:intro}
We examine an optimal control problem {\red{arising}} from chemical kinetics, biology, and ecology in a two-species formulation.
Boundary or distributed  control function ($g_i$) \magent{is applied to} a system ($y_i$) driving it to a desired target (${y_{i,d}}$),  $i=1,2$. We assume the control function space $S$ employed in a set $\Omega \subset \mathbb R ^2$ that denotes an open bounded and convex domain with Lipschitz boundary $\Gamma$.  
  $S=\Omega$ refers to distributed and  $S=\Gamma$ to boundary control. The scope of this work is to study the distance between the solution  and the desired target function  and to investigate how the parameters affect the solutions under the minimization of the quadratic functional
  \begin{eqnarray}\label{functional_full}
   \magent{J(y_1,y_2,g_1,g_2):=}
    \left\{ \frac{1}
{2}\int_0^T {\left\| y_1-{y_{1,d}}\right\|_{L^2 (\Omega )}^2  dt
}
+\frac{\gamma _1}{2}\int_0^T {\left\| g_1 \right\|_{L^2 ({\bf{S}}
)}^2 dt
}
\right.
\\   +\left.\frac{1}{2}
\int_0^T {\left\| {{y_2-{y_{2,d}}}}\right\|_{L^2 (\Omega )}^2) dt
}
+\frac{\gamma _2}{2}\int_0^T {\left\| g_2 \right\|_{L^2 ({\bf{S}}
)}^2dt
}\nonumber
\right\},
\end{eqnarray}
subject to the Lotka-Volterra system with {\textit{distributed control}}
\begin{eqnarray}
& &{\magent{{{y_{1t}}}}}  = \epsilon_1 \Delta y_1 +  (a-b y_2)y_1 +g_1+f_1 {\text{ in }}(0,T] \times \Omega, \quad
 y_1 = 0 {\text{ on }}(0,T] \times \Gamma, \quad
\label{LoD1}\\
& &{\magent{{{y_{2t}}}}}  = \epsilon_2 \Delta y_2 + (cy_1-d)y_2 +g_2+f_2 {\text{ in }}(0,T] \times \Omega, \quad
  y_2= 0 {\text{ on }}(0,T] \times \Gamma, 
\label{LoD2}
\end{eqnarray}
or the
Lotka-Volterra system with Robin \textit{boundary control}
\begin{eqnarray}
& &{\magent{{{y_{1t}}}}}  = \epsilon_1 \Delta y_1 +  (a-b y_2)y_1 +f_1 {\text{ in }}(0,T] \times \Omega, \quad
 {\magent{y_1}}+\frac{\epsilon_1}{\lambda_1}\frac{\partial y_1 }{\partial {\mathbf n}}= g_1 {\text{ on }}(0,T] \times \Gamma, \quad
\label{LoR1}\\
& &{\magent{{{y_{2t}}}}}  = \epsilon_2 \Delta y_2 + (cy_1-d)y_2 +f_2 {\text{ in }}(0,T] \times \Omega, \quad
 y_2+\frac{\epsilon_2}{\lambda_2}\frac{\partial y_2 }{\partial {\mathbf n}}= g_2 {\text{ on }}(0,T] \times \Gamma,
 \label{LoR2}
\end{eqnarray} 
enforcing initial data
\[y_1(0,x) =  {y_{1,0}} 
\quad y_2(0,x)=  {y_{2,0}} {\text{ in }} \Omega. \]
Within the next paragraphs, we introduce literature and important past works as well as details related to the dynamics of the considered system. We also present the involved parameters and their physical meaning explaining how they affect the solution. 
\subsection{The physical model and related results}
{There is a wide variety of Lotka-Volterra system versions,  
the predator-prey/defense, the competitive/colonization and/or the
cooperative/resource exchange systems. In general,} the Lotka-Volterra predator-prey model mimics 
bacteria populations, chemical reactions, and other similar models. {In this manuscript,} we focus on {predator-prey/defense systems in} two space dimensions $x_1$, $x_2$ with two species and concentrations  $y_1$, $y_2$, (\ref{LoD1})-(\ref{LoR2}). The  $y_1$ variable describes the prey's concentration, whereas the $y_2$ variable represents the predator's. The parameters i) $a$ represents the growth rate of $y_1$, ii) $b$ represents the rate of $y_2$ is killing  $y_1$, iii) $c$ represents the growth rate of $y_2$ by chances of killing $y_1$, and iv) $d$ represents the death rate of $y_2$, \cite{Ha97}. The forcing terms $f_1$, $f_2$ and the parameters $\lambda_1$, $\lambda_2$, $\epsilon_1$, $\epsilon_2$, are given data while $\gamma_1$, $\gamma_2$ denote penalty parameters that limit the size of the control and will properly be chosen. 

Several results regarding the analysis of related systems have been recorded 
in the literature. We start with the pioneer works of Holling and Volterra for simple types of predation and parasitism and the response of predators to prey density with its role in mimicry
and population regulation, \cite{Hol59, Hol63,Volterra78}
. In \cite{GaTre08} 
fully discrete estimates, a fully discrete error bound, and rates of convergence for
systems modeling predator–prey interactions, where the local growth of prey is logistic
and the predator displays the Holling type II functional response associated with real kinetics 
is presented. 
In 
\cite{ChryKo22} 
high-order schemes for a modified predator-prey system proving main stability estimates, under minimal regularity assumptions on the given data, are considered obtaining a-priori error estimate. In \cite{DieGaTre17} 
predator–prey Holling type dynamics using implicit-symplectic schemes
are analyzed proving optimal a priori error estimates. 
We also report the books \cite{Ha97}, \cite{Mu03}
, and the references therein, presenting ideas starting from the classical work of \cite{Volterra78} 
and thereafter.

Since Lotka-Volterra systems are considered reaction-diffusion systems models, we report related works resembling the predator-prey system. Such is the discontinuous Galerkin (dG) methods for mass transfer through semi-permeable membranes \cite{CaGeJe2022}
, Brusselator systems \cite{ChryKaKo19}, 
and predator–prey models with the addition of cross-diffusion blows-up on surfaces employing an implicit-explicit (IMEX) method \cite{FriMadSguVen17}. 
 For $\lambda$–$\omega$ type reaction-diffusion systems  the interested reader could see  \cite{GaBlo05}, 
for the forced Fisher equation that models the dynamics of gene selection/migration for a diploid population with two available alleles in a multidimensional habitat and in the presence of an artificially introduced genotype \cite{GuHoZhu06}
, error estimates for the semidiscrete Galerkin approximations of the FitzHugh-Nagumo equations \cite{Ja92}
, and epitaxial fourth order growth model with 
implicit-explicit time discretization is handled in 
 \cite{JuLiQiaoZha18,KaRo21}
. The performance of several 
linear multi-step IMEX schemes for reaction-diffusion problems in pattern 
formation are discussed in \cite{Ru95}, 
and finite volume element approximation of an inhomogeneous Brusselator model with cross-diffusion are analyzed in \cite{LinRuizTian14}
.

For continuous or dG in space and/or dG in time parabolic  
PDE discretizations 
we 
quote 
the works \cite{AkrMakr2022,AkrMakr2014,DelfHagTro81,EstepLarsson93,ErickssonJohnson95
}, 
while nonlinear parabolic, implicit, and implicit-explicit multistep ﬁnite element methods can be found in \cite{AkrCrMakr1998, AkrCr2004}.
%

In \cite{CaVa22},
a metaheuristic optimization method based on the Lotka-Volterra system equations where 
the interaction that exists in nature between plants through a network of fungi, and the Lotka-Volterra system of equations model the different types of relationships, 
 is examined. 
In \cite{Ib17}, 
the Lotka-Volterra model is investigated as two coupled ordinary differential equations representing the interaction of two species, a prey one and a predator one formulating an optimal control problem by adding the effect of hunting both species as the control variable 
implementing a single shooting method to solve the optimization problem. 
Moreover, in \cite{GaMo97}
\magent{the uniqueness of the optimal control is obtained by assuming a large crowding effect of the species}.

Optimal control, with PDEs as constraints, works also are \cite{GaTre07}
, where results from semigroup theory and a semi-implicit Galerkin finite element method are employed, based on the kinetics satisfying a Lyapunov-type condition of a nonlinear plankton-fish optimal control reaction-diffusion system. 
In \cite{ChKa14} 
an optimal control problem for a parabolic PDE also with nonsmooth initial data  and discontinuous in time discretization for
Robin boundary control and \cite{ChKa12, NeVe12} for distributed control associated with semilinear parabolic PDEs are presented, see also references therein. We refer to \cite{GuMa06}, 
\cite{ChKa15} 
for fluids control, to \cite{HoRo21} 
for nonsmooth optimal control problems governed by coupled semilinear PDE-ODE systems and \cite{GarcHiKah19}, 
 \cite{ChPla23} 
for
optimal control of time-discrete 
phase 
flow driven by  diffuse 
models. 
More optimal control works are \cite{KaKaTra23} 
considering elliptic PDE optimal control systems with unfitted mesh approaches based on reduced order modeling techniques, and \cite{ArKa22} 
 concerning random geometrical morphings on a cutfem dG type optimal control framework.

\magent{
The scopes  of the present manuscript is to investigate continuous and fully discrete finite element formulations for a semilinear optimal control system of a predator-prey Lotka-Volterra type model. A conforming finite element for the spatial part has been adopted, as well as a discontinuous Galerkin approximation for the time part. Cases with and without control constraints have been considered, and for the handling of distributed or Robin boundary optimal controls, non-linear gradient methods have been employed.
%
 A study is presented on the sensitivity of the solution distance from the target function in cases with smooth and rough initial data. Numerical tests are conducted to illustrate the effectiveness of the approach and its robustness with respect to the regularity of the initial data, as well as, from the  space and/or time higher order polynomial approximation point of view whenever it is applicable. 
%
%
%
The aforementioned numerical experiments in cases of distributed and boundary controls and implemented with the FreeFem++ software, show also the effect of control constraints
. In both distributed and boundary controls several results have been reported with respect to the distance from the targets, with smooth and non smooth initial conditions, and also the effects of controls on the optimal state solutions are presented.

The necessary and sufficient conditions characterized by first and second derivative of the cost functions have been introduced with a proof of the challenging case of constrained in specific intervals optimal control. Furthermore, some extended standard but necessary results have been presented. 
}

The document is organized as follows. In Section \ref{sec:intro}, the continuous system in a strong formulation is introduced while in Section \ref{sec:background}, related results and theoretical tools and preliminaries are presented for the continuous and discretized formulation.
%
In Section \ref{sec:Distributed}, the continuous and discretized weak form in the dG in time framework and the optimality system is reported while distributed control is applied with control constraints. Additionally, the first-order optimality conditions are demonstrated and the second-order optimality conditions and sufficient conditions for optimality are proved. 
%
%
%
%
In the following, in Section \ref{sec:Robin1} we extend the study to Robin boundary control, we introduce the optimality system and the fully discrete Robin boundary type optimal control problem.

Finally, in Section \ref{sec:experiments} several numerical experiments expose tables reporting results for the distance norm for the two species and $J$ functional values, for the distributed control case: ($\alpha$) with control constraints and constant polynomials in time and linear in space, 
%
($\beta$) without control constraints and linear in time, quadratic in space, 
%
and ($\gamma$) a qualitative study and nullclines are presented.
%
For the boundary Robin control case:
low regularity initial data employing constant in time
/linear in space polynomials, 
%
as well as linear in time/
linear in space, 
linear in time/quadratic in space polynomials 
and
the heuristic control algorithm used are demonstrated.

To our best knowledge, {there are no predator-prey related investigations} considering discontinuous Galerkin in time in which optimal control techniques are applied with low and higher order dG in time discretization, as well as with non-smooth data\magent{, and the manuscript provides a contribution in the field indicating a strategy that can be extended to more complicated situations.} 

\section{Background}\label{sec:background}
\subsection{Notation}
Following the literature, see e.g. \cite[Chapter 5]{E98}, we use the  Hilbert spaces standard notation $L^2(\Omega)$, $H^s(\Omega)$, $0<s \in \mathbb {R}$,
\magent{$H^1_0(\Omega) \equiv \{ v \in H^1(\Omega) : v|_{\Gamma} =0 \}$
.}
We denote by $H^{-1}(\Omega)$ the dual of $H^1_0(\Omega)$, $H^{*}(\Omega)$ the dual of $H^1(\Omega)$
and the corresponding duality pairing by $\langle \cdot,\cdot \rangle_{H^1(\Omega)^*,H^1(\Omega)} \equiv \langle \cdot,\cdot \rangle$. We will frequently use the space $H^{1/2}(\Gamma)$, its dual denoted by $H^{-1/2}(\Gamma)$, and their duality pairing denoted by $\langle .,. \rangle_{H^{-1/2}(\Gamma), H^{1/2}(\Gamma)} \equiv \langle \cdot,\cdot \rangle_{\Gamma}$. Finally, the standard notation $(\cdot,\cdot)$, $(\cdot,\cdot)_{\Gamma}$ will be used for the $L^2(\Omega) \equiv H^0(\Omega)$ and $L^2(\Gamma)$ inner products respectively.
For any of the above Sobolev spaces, we define the space-time spaces 
 in a standard way.


 For any
Banach space $X$, we denote by $L^p[0,T;X], L^{\infty}[0,T;X]$ the
standard time-space spaces, endowed with norms:
$$\|v\|_{L^p[0,T;X]} = \left ( \int_0^T \|v\|^p_{X} dt \right )^{\frac{1}{p}},
\quad \|v\|_{L^{\infty}[0,T;X]} = \mbox{ ess$ \sup_{t \in [0,T]}
\|v\|_{X}$}. $$ The set of all continuous functions $v : [0,T]
\rightarrow X$, is denoted by $C[0,T;X]$, with norm defined by
$\|v\|_{C[0,T;X]} = \max_{t \in [0,T]} \|v(t)\|_{X}.$ Finally, we
denote in a classic way  $H^1[0,T;X]$ and the norm,
\begin{eqnarray*}\|v\|_{H^1[0,T;X]} &=& \left ( \int_0^T \|v\|^2_{X} dt \right )^{\frac{1}{2}}
+ \left ( \int_0^T \|v_t\|^2_{X} dt \right )^{\frac{1}{2}} \leq C
< \infty.
\end{eqnarray*}
We will frequently use the spaces 
for 
the aforementioned system applying distributed control 
 and zero Dirichlet conditions $W_D(0,T): = L^2 [0,T;H^1(\Omega)] \cap L^{\infty} [0,T;L^{2} (\Omega)]$ with norm defined as $\|u\|_{W_D(0,T)}^2 \equiv \| u\|_{L^2 [0,T;H^1 (\Omega )]}^2  + \|u\|_{L^{\infty} [0,T;L^{2} (\Omega )]}^2$
while for problems employing Robin boundary control 
 $W_R(0,T): = L^2 [0,T;H^1(\Omega)] \cap L^{\infty} [0,T;L^{2} (\Omega)]\times L^{2} [0,T;L^{2} (\Gamma)]$ with norm defined as $\|u\|_{W_R(0,T)}^2 \equiv \| u\|_{L^2 [0,T;H^1 (\Omega )]}^2  + \|u\|_{L^{\infty} [0,T;L^{2} (\Omega )]}^2+ \|u\|_{L^{2} [0,T;L^{2} (\Gamma )]}^2$, see e.g. \cite{ChKa14}, \cite{ChKa12}.
\subsection{Preliminaries and discretization tools}\label{d.o.c.s.}
We consider a family of triangulations, $\{\mathcal{T}_h\}_{h>0}$ of $\Omega$, defined in the standard way,  \cite{Ci}. We associate two parameters $h_T$ and $\rho_T$ for each element $T \in
\mathcal{T}_h$, denoting the diameter of the set $T$,
and the diameter of the largest ball contained in $T$ respectively. The size of the mesh is denoted by
$h=\max_{T\in\mathcal{T}_h}h_T$. The following standard properties of the mesh will be assumed: 
$\alpha)$ There exist two positive constants
$\rho_{\mathcal{T}}$ and $\delta_{\mathcal{T}}$ such that
$
\frac{h_T}{\rho_T}\leq
    \rho_{\mathcal{T}}\ \mbox{ and } \ \frac{h}{h_T}\leq
    \delta_{\mathcal{T}} \ \ \forall T\in \mathcal{T}_h \mbox{ and } \forall h>0,
$ 
$\beta)$  Given $h$, let $\{ T_j \}_{j=1}^{N_h}$ denote the family of triangles belonging to $\mathcal{T}_h$ and having one side included on the boundary $\Gamma$. Thus, if the vertices of $T_j \cap \Gamma$ are denoted by $x_{j,\Gamma}$, $x_{j+1,\Gamma}$ then the straight line $[x_{j,\Gamma}, x_{j+1,\Gamma}] \equiv T_j \cap \Gamma$. Here, we also assume that $x_{1,\Gamma} = x_{N_h+1, \Gamma}$.

Considering the mesh $\mathcal{T}_h$ we construct the finite dimensional spaces $U_{h,{{D}}} \subset H^1_0(\Omega)$, $U_{h,{{R}}} \subset H^1(\Omega)$ employing piecewise polynomials in $\Omega$. Standard approximation theory assumptions are assumed on these spaces. In particular, for any $v \in H^{l+1}(\Omega)$, there exists an integer $\ell \geq 1$, and a constant $C>0$ independent of $h$ such that:
$
\inf_{v_h \in U_h}\|v - v_h\|_{H^s(\Omega)} \le Ch^{l+1-s}\|v\|_{H^{l+1}(\Omega)}, \ \mbox{ for } 0 \le l \le \ell \mbox{ and } s=-1,0,1,
$ 
and inverse inequalities on quasi-uniform triangulations, i.e., there exist constants $C\ge 0$, such that $\|v_h\|_{H^1(\Omega)} \leq C/h \|v_h\|_{L^2(\Omega)}$, and $\|v_h\|_{L^2(\Omega)} \leq C/h \|v_h\|_{H^1(\Omega)^*}$.
{{Fully discrete}} approximations will be constructed on a quasi-uniform partition
$0=t^0 < t^1 < \ldots < t^N=T$ of $[0,T]$, i.e., there exists
a constant $0<\theta<1$ such that
$\min_{n=1,..,N} (t^n-t^{n-1}) \geq \theta \max_{n=1,...,N} (t^n-t^{n-1})$.
We also use the notation $\tau^n=t^n-t^{n-1}, \tau= \max_{n=1,...,N} \tau^n$
and we denote by
${\mathcal P}_k[t^{n-1},t^n; U_{{h{{,D}}}} \text{ or } U_{{h{{,R}}}}]$ the space of polynomials of degree $k$ or
less, having values in  $U_{{h{\blue{,D}}}}$ or $U_{{h{\blue{,R}}}}$. We seek approximate solutions 
belonging to the spaces
$$
{\mathcal U}_{h,{\red{D}}} = \{y_{ih} \in L^2[0,T;H^1_0(\Omega)] :
        y_{ih}|_{(t^{n-1},t^n]} \in {\mathcal P}_k[t^{n-1},t^n;  U_{h,{{D}}}
] \},
$$
$$
{\mathcal U}_{h,{\blue{R}}} = \{y_{ih} \in L^2[0,T;H^1(\Omega)] :
        y_{ih}|_{(t^{n-1},t^n]} \in {\mathcal P}_k[t^{n-1},t^n; U_{h,{\blue{R}}}] \}.
$$

By convention, the functions of  ${{\mathcal U}_{h,{\blue{D}}}}$, ${{\mathcal U}_{h,{\blue{R}}}}$ are left
continuous with right limits and hence will write $y^n_{ih} \equiv y^n_{{ih}-}$ for $y_{ih}(t^n) = y_{ih}(t^n_-)$, and $y^n_{{ih}+}$ for $y_{ih}(t^n_+)$,
while the jump at $t^n$, is denoted by $[y^n_{ih}] = y^n_{{ih}+} - y^n_{{ih}}$.
In the above definitions, we have used the following notational abbreviation, $y_{{ih},\tau} \equiv y_{ih}$,
${\mathcal U}_{h,\tau,{\red{D}}} \equiv {\mathcal U}_{h,{\red{D}}}$, ${\mathcal U}_{h,\tau,{\red{R}}} \equiv {\mathcal U}_{h,{\red{R}}}$. For the time discretization, we will use the lowest order scheme ($k=0$) which corresponds to the discontinuous Galerkin variant of the implicit Euler.  This approach gives good results also in cases with limited regularity which is acting as a barrier in terms of developing estimates of higher order. Although, we will illustrate some cases related to higher-order schemes.
 The
discretization of the control can be effectively achieved through
the discretization of the adjoint variable $\mu$. However, we
point out that the only regularity assumption on the discrete
control is $g_{1h},g_{2h} \in L^2[0,T;L^2(\Omega)]$ {{or $g_{1h},g_{2h} \in L^2[0,T;L^2(\Gamma)]$ for the distributed and boundary control respectively}}.
 %
 %
%
%
\section{Distributed control, Dirichlet zero boundary and control constraints.}\label{sec:Distributed}
This section is devoted to 
distributed control with Dirichlet zero boundary, 
the enriched problem actually involves the functional $J(y_1,y_2,g_1,g_2)$ as it is described in (\ref{functional_full}) for $S=\Omega$, 
 subject {\red{to 
 the}} control constraints
$$
g_{ia} \leq g_i(t,x) \leq g_{ib} \text{ for a.e.} (t,x) \in (0,T) \times \Omega \text{,  where}\,\, {g_{ia}},{g_{ib}} \in {\mathbb R}, \,\,i=1,2. $$
\subsection{The continuous and discrete control problem/optimality system}{\label{subsec:3.1}}
\magent{
Below, we state the optimality system which consists of the state equation given in the weak form, 
the adjoint, and the optimality condition applying distributed control. Particularly, first-order necessary conditions (optimality system) of the above optimal control problems are demonstrated. 
The aforementioned systems 
 are introduced by employing a discontinuous in time Galerkin (dG) scheme and a conforming Galerkin method in space. The corresponding optimality system (first-order necessary conditions) consists of a primal (forward in time) equation system and an adjoint (backward in time) equation system which are coupled through an optimality condition, and non-linear terms  
 as it is described in  Lemma \ref{lem:2.5}
.
The main aim is to show that the dG approximations of the optimality system exhibit good behavior and to examine the crucial matter of the distance of the solution from the desired target.}

We begin by stating the weak formulation of the state equation. Given $f_1$, $f_2\in L^2\left [0,T;H^{-1}(\Omega)\right ]$, controls $g_1,g_2\in L^2\left[0,T;L^{2}(\Omega)\right ]$, and {{initial}}  states ${y_{1,0}}$, ${y_{2,0}} \in L^2(\Omega)$
we seek $y_1,y_2\in L^\infty[0,T;L^2(\Omega)] \cap L^2[0,T;H_0^{1}(\Omega)]$ such that for a.e. $t\in(0,T]$, and for all
$v \in H^1(\Omega)$
\begin{eqnarray}
 \left\langle {{\magent{y_{1t}}} ,v} \right\rangle  + \epsilon_1(\nabla y_1,\nabla v )  -((a-by_2)y_1,v)=      \left\langle {f_1,v } \right\rangle+ \left< {g_1,v } \right > \quad\mbox{ and}\quad
  \left( {y_1(0),v } \right) &=& \left( {{y_{1,0}} ,v } \right),\nonumber\\
   \left\langle {\magent{{y_{2t}}} ,v} \right\rangle  + \epsilon_2  (\nabla y_2, \nabla v )  -((cy_1-d)y_2,v)= \left\langle {f_2,v } \right\rangle+\left <g_2,v \right >  \quad\mbox{ and}\quad
  \left( {y_2(0),v } \right) &=& \left( {{y_{2,0}} ,v } \right).\nonumber
\end{eqnarray}
An equivalent \magent{well-posed} weak formulation which is more suitable for the analysis of dG schemes is to seek
$y_1,y_2\in\magent{W_D(0,T) = L^2 [0,T;H^1(\Omega)] \cap L^{\infty} [0,T;L^{2} (\Omega)]}$ such that for all $v \in L^2[0,T;H^1(\Omega)] \cap H^1[0,T;H^{-1}(\Omega)]$,
\begin{eqnarray}
&&  (y_1(T),v(T)) + \int_0^T {\left( { - \left\langle {y_1,v_t } \right\rangle  + \epsilon_1 \left( {\nabla y_1,\nabla v } \right)-\left( {(a-by_2)y_1,v } \right)
       }    \right)} dt \nonumber \\
&& \qquad   = ({y_{1,0}} ,v(0)) + \int_0^T {  {\left\langle {f_1,v }  \right\rangle  }  } dt+ \int_0^T {\left( {  g_1,v  } \right)} dt, \label{eqn:2.2}
\\
&&  (y_2(T),v(T)) + \int_0^T {\left( { - \left\langle {y_2,v_t } \right\rangle  + \epsilon_2 \left( {\nabla y_2,\nabla v } \right)
      -({(cy_1-d)y_2,v } ) }    \right)} dt \nonumber \\
&& \qquad   = ({y_{2,0}} ,v(0)) + \int_0^T { { \left <f_2,v \right >}  } dt+ \int_0^T {\left( {  g_2,v  } \right)} dt.\label{eqn:2.1}
\end{eqnarray}

The control to state mapping $G: L^2[0, T;{L^2(\Omega)\times L^2(\Omega)]} \to W_D(0, T)\times W_D(0, T)$, which associates to each control $g_1, g_2$ the corresponding state $G(g_1,g_2)$ $ = ({y}_{g_1},{y}_{g_2})$ $\equiv$ $(\magent{y_1(g_1,g_2)},\magent{y_2(g_1,g_2)})$ via (\ref{eqn:2.2})--(\ref{eqn:2.1}) is well defined, and continuous, so does the cost functional, frequently denoted to by its reduced form, $J(y_1,y_2,g_1,g_2) \equiv J(\magent{y_1(g_1,g_2)},\magent{y_2(g_1,g_2)}): L^2[0,T;{L^2(\Omega)}] \to \mathbb R$. 
\begin{defn}\label{defn:2.2}
Let ${f_1, f_2}\in L^2 [0,T;H^{-1} (\Omega )]$, ${y_{1,0}, y_{2,0}} \in L^2(\Omega)$, and ${y_{1,d}, y_{2,d}} \in L^2[0,T;L^2(\Omega)]$\magent{, $g_{ia}$, $g_{ib} \in \mathbb{R}$}  be given data.
Then, the set of admissible controls denoted by ${\mathcal A}_{ad}$ for the corresponding distributed control problem with constrained controls
takes the form:
${\mathcal A}_{ad} = \{ {g_i} \in L^2[0,T;L^2(\Omega)]: {{g_{ia}}} \leq {g_i}(t,x) \leq {{g_{ib}}}, i=1,2 \text{ for a.e. } (t,x) \in (0,T) \times \Omega \}.$
The pairs $(\magent{y_i(g_1,g_2)},g_i) \in W_D(0,T) \times {\mathcal A}_{ad},i=1,2$,  is said to be an optimal solution if $J(\magent{y_1(g_1,g_2)},\magent{y_2(g_1,g_2)},g_1,g_2)\le J(\magent{w_1(h_1,h_2),w_2(h_1,h_2),}h_1,h_2)$\magent{, for all $(h_1 , h_2 ) \in \mathcal{A}_{ad }\times \mathcal{A}_{ad}$, $(w _1 (h _1\magent{,h_2}), w _2 (\magent{h_1,}h _2 )) \in W_ D (0, T ) \times W_ D (0, T )$ being the solution of problem (\ref{eqn:2.2})--(\ref{eqn:2.1}) with control functions $(h _1 , h _2 )$}
.
\end{defn}
We will occasionally abbreviate the notation $y_i \equiv {y}_{g_i} \equiv {y_i}( \magent{g_1,g_2}),i=1,2$. An optimality system of equations can be derived by using standard techniques under minimal regularity assumptions on the given initial data and forcing term; see for instance \cite{ChKa12}
. Here, we adopt the notation and framework of \cite[Section 2]{CaCh12}. We first state the basic differentiability property of the cost functional.
\begin{lem} \label{lem:2.4}
The cost functional $J:L^2[0,T;{L^2(\Omega)}] \to \mathbb R$ is of class $C^{\infty}$ and for every $g_1,g_2,y_1,y_2 \in L^2[0,T;{L^2(\Omega)}]$,
\[J^{'}(g_1,g_2)(u_1,u_2) = \int_0^T \int_\Omega \Big (( {\mu_1} (\magent{g_1,g_2}) + \gamma_1 g_1) u_1 , ( {\mu_2} (\magent{g_1,g_2}) + \gamma_2 g_2) u_2\Big ) dxdt, \]
where $\mu _i(\magent{g_1,g_2}) \equiv {\mu}_{g_i} \in W_D(0,T), i=1,2,$ is the unique solution of following problem: For all $v \in L^2[0,T;H^1(\Omega)] \cap H^1[0,T;H^{-1}(\Omega)]$,
\begin{eqnarray} 
&&   \int_0^T {\left( {  \left\langle {{\mu}_{g_1},v_t } \right\rangle  + \epsilon_1 \left( {\nabla{\mu}_{g_1},\nabla v } \right)
    - ((a-by_{g_2}){\mu}_{g_1},v )   }    \right) dt} \nonumber \\
&& \qquad \qquad\qquad\qquad\qquad\qquad\qquad\qquad
 = -({\mu}_{g_1}(0) ,v(0)) + \int_0^T \left( {y}_{g_1}-y_{1,d},v   \right)dt,\\
&&   \int_0^T {\left( {  \left\langle {{\mu}_{g_2},v_t } \right\rangle  + \epsilon_2\left( {\nabla{\mu}_{g_2},\nabla v } \right)
    -((c{y}_{g_1}-d){\mu}_{g_2},v )   }    \right) dt} \nonumber \\
&& \qquad \qquad \qquad\qquad\qquad\qquad\qquad\qquad
 = -({\mu}_{g_2}(0) ,v(0)) + \int_0^T \left( {y}_{g_2}-{y_{2,d}},v   \right)dt.
\end{eqnarray}
where $\mu_{g_i}(T)=0$ and $(\mu_{g_i})_t \in L^2[0,T;H^{-1}(\Omega)], i=1,2$.
\end{lem}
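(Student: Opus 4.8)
The plan is to exploit the fact that the only nonlinearity in \eqref{eqn:2.2}--\eqref{eqn:2.1} is the bilinear coupling $y_1y_2$, so that the control-to-state map is not merely continuous (as already recorded in the text) but in fact $C^\infty$, and then to apply the chain rule to the quadratic functional and rewrite the Fréchet derivative by means of the adjoint state. Concretely, I would write the state system abstractly as $\mathcal F(y_1,y_2,g_1,g_2)=0$, where $\mathcal F$ maps $W_D(0,T)^2\times\bigl(L^2[0,T;L^2(\Omega)]\bigr)^2$ into the dual of the test space $L^2[0,T;H^1(\Omega)]\cap H^1[0,T;H^{-1}(\Omega)]$. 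The map $(y_1,y_2)\mapsto y_1y_2$ is bounded from $W_D(0,T)^2$ into $L^2[0,T;L^2(\Omega)]\hookrightarrow L^2[0,T;H^{-1}(\Omega)]$ because, in two space dimensions, $\|w\|_{L^4(\Omega)}^2\le C\|w\|_{L^2(\Omega)}\|w\|_{H^1(\Omega)}$ and $W_D(0,T)$ controls both $\|\cdot\|_{L^2[0,T;H^1(\Omega)]}$ and $\|\cdot\|_{L^\infty[0,T;L^2(\Omega)]}$; being bounded and bilinear it is $C^\infty$, hence $\mathcal F$ is $C^\infty$. Since $\partial_{(y_1,y_2)}\mathcal F$ at any point is the operator of a well-posed linear parabolic system (standard theory, as in \cite{ChKa12}), it is a Banach-space isomorphism onto the test dual, and the implicit function theorem yields $G\in C^\infty$ from $\bigl(L^2[0,T;L^2(\Omega)]\bigr)^2$ to $W_D(0,T)^2$.

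\textbf{Differentiation of $J$.} The functional \eqref{functional_full} is a bounded quadratic form in $(y_1,y_2,g_1,g_2)$, hence $C^\infty$; composing with the $C^\infty$ map $G$ shows the reduced functional is $C^\infty$, which gives the first assertion. For the derivative formula, let $(z_1,z_2):=G'(g_1,g_2)(u_1,u_2)$ be the solution of the linearized forward system (the same equations with the reaction terms replaced by their linearization around $(y_{g_1},y_{g_2})$, right-hand sides $u_i$, and zero initial data). The chain rule gives
\[
J'(g_1,g_2)(u_1,u_2)=\sum_{i=1}^{2}\int_0^T\!\!\int_\Omega (y_{g_i}-y_{i,d})\,z_i\,dx\,dt+\sum_{i=1}^{2}\gamma_i\int_0^T\!\!\int_\Omega g_i\,u_i\,dx\,dt .
\]

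\textbf{Adjoint representation.} Next I would define $\mu_{g_i}$ as the solution of the backward system in the statement with terminal condition $\mu_{g_i}(T)=0$: this is again a linear parabolic system, backward in time, with data $y_{g_i}-y_{i,d}\in L^2[0,T;L^2(\Omega)]$, so the same linear theory gives existence, uniqueness, and the regularity $\mu_{g_i}\in W_D(0,T)$ with $(\mu_{g_i})_t\in L^2[0,T;H^{-1}(\Omega)]$. The key computation is to test each linearized forward equation with the corresponding $\mu_{g_i}$ and each adjoint equation with the corresponding $z_i$, then subtract and sum over $i=1,2$: after integration by parts in time, the boundary contributions drop out because $z_i(0)=0$ and $\mu_{g_i}(T)=0$, while the Laplacian terms cancel and the reaction contributions cancel pairwise since the adjoint reaction coefficients are the transpose of the linearization matrix $\begin{pmatrix} a-by_{g_2} & -by_{g_1}\\ cy_{g_2} & cy_{g_1}-d\end{pmatrix}$ (the symmetry of the $L^2$ inner product doing the bookkeeping). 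What remains is $\int_0^T(y_{g_i}-y_{i,d},z_i)\,dt=\int_0^T(\mu_{g_i},u_i)\,dt$ for $i=1,2$; substituting into the expression above yields the claimed formula.

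\textbf{Main obstacle.} I expect the delicate part to be entirely in the state map step: justifying that the bilinear term lands in the right space, and in particular that $\partial_{(y_1,y_2)}\mathcal F$ is boundedly invertible, rests on the two-dimensional embedding $H^1(\Omega)\hookrightarrow L^4(\Omega)$ combined with the $L^\infty[0,T;L^2(\Omega)]$ bound, together with the a priori estimates showing unique solvability (with the stated regularity) of the linearized forward problem and of the backward adjoint problem. Once these linear facts are in hand, the remaining steps---the chain rule and the duality identity---are routine integration-by-parts computations.
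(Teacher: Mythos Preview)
The paper does not supply its own proof of this lemma; the statement is simply recorded, and the closely related theorems later in Section~\ref{sec:Distributed} are justified only by citing \cite{CaCh12} and \cite{NeVe12}. Your outline---implicit function theorem for the $C^\infty$ regularity of the control-to-state map (using the 2D Ladyzhenskaya inequality to handle the bilinear term), chain rule for $J$, and a duality computation pairing the linearized forward system with the backward adjoint---is precisely the standard argument those references carry out, so at the level of method there is nothing to contrast.

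One remark on your duality step. You correctly note that the reaction contributions cancel only if the adjoint carries the \emph{transpose} of the full Jacobian
\[
\begin{pmatrix} a-by_{g_2} & -by_{g_1}\\ cy_{g_2} & cy_{g_1}-d\end{pmatrix},
\]
so that the $\mu_{g_1}$-equation should also contain a term $c\,y_{g_2}\,\mu_{g_2}$ and the $\mu_{g_2}$-equation a term $-b\,y_{g_1}\,\mu_{g_1}$. The adjoint system as printed in the lemma displays only the diagonal entries. Your computation, if carried through, produces the fully coupled adjoint; with only the diagonal terms the identity $\int_0^T(y_{g_i}-y_{i,d},z_i)\,dt=\int_0^T(\mu_{g_i},u_i)\,dt$ does not close, because the cross terms $-b\,y_{g_1}z_2$ and $c\,y_{g_2}z_1$ from the linearized forward system have nothing to cancel against. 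In other words, your argument is sound, but what it actually proves is the corrected (coupled) version of the adjoint system rather than the decoupled one written in the statement.
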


In the following Lemma, we state the optimality system which consists of the state equation
, the adjoint, and the optimality condition.
\begin{lem} \label{lem:2.5} Let $(y_{g_i},g_i) \equiv (y_i,g_i) \in W_D(0,T) \times {\mathcal A}_{ad}, i=1,2,$ denote the unique optimal pairs of Definition \ref{defn:2.2}. Then, there exists an adjoint $\mu_1,\mu_2 \in W_D(0,T)$ satisfying, $\mu_1(T) =\mu_2(T)=0$ such that for all $v  \in L^2[0,T;H^1(\Omega)] \cap H^1[0,T;H^{-1}(\Omega)]$,
 \begin{eqnarray}
&&  (y_1(T),v(T)) + \int_0^T {\left( { - \left\langle {y_1,v_t } \right\rangle  + \epsilon_1 \left( {\nabla y_1,\nabla v } \right)-\left( {(a-by_2)y_1,v } \right)
       }    \right)} dt \nonumber \\
&& \qquad\qquad\qquad\qquad\qquad    = ({y_{1,0}} ,v(0)) + \int_0^T {  {\left\langle {f_1,v } \right\rangle  }  } dt+ \int_0^T {\left( { g_1,v } \right)} dt,
\\
&&  (y_2(T),v(T)) + \int_0^T {\left( { - \left\langle {y_2,v_t } \right\rangle  + \epsilon_2 \left( {\nabla y_2,\nabla v } \right)
       }    \right)} dt - \int_0^T {\left(  (cy_1-d)y_2,v   \right)} dt\nonumber \\
&& \qquad\qquad\qquad\qquad\qquad    = ({y_{2,0}} ,v(0)) + \int_0^T {    { \left <f_2,v \right >}  } dt+ \int_0^T {\left( {  g_2,v  } \right)} dt,\\
&&   \int_0^T {\left( {   \left\langle {{\mu}_{1},v_t } \right\rangle  + \epsilon_1 \left( {\nabla{\mu}_{1},\nabla v } \right)
    - ((a-by_{2}){\mu}_{1},v)    }    \right) dt} \nonumber\\
&& \qquad \qquad \qquad \qquad \qquad   = -({\mu}_{1}(0) ,v(0)) + \int_0^T \left(  {y}_{1}-{y_{1,d}},v    \right)dt, \label{eqn:2.6mu1}\\
&&   \int_0^T {\left( {  \left\langle {{\mu}_{2},v_t } \right\rangle  + \epsilon_2\left( {\nabla{\mu}_{2},\nabla v } \right)
    -((cy_1-d){\mu}_{2},v )   }    \right) dt} \nonumber \\
&& \qquad \qquad \qquad \qquad \qquad    = -({\mu}_{2}(0) ,v(0)) + \int_0^T \left( {y}_{2}-{y_{2,d}},v   \right)dt,\label{eqn:2.6mu2}
\end{eqnarray}
with control constraints:
\begin{eqnarray}\label{eqn:2.8}
\int_0^T \int_\Omega  \left ( \gamma_1 g_1+   \mu_1 \right) \left ( u_1-g_1 \right )
dx dt \geq 0,\,
\int_0^T \int_\Omega  \left ( \gamma_2 g_2+   \mu_2 \right) \left ( u_2-g_2 \right )
dx dt \geq 0, \\ \forall u_1,u_2 \in {\mathcal A}_{ad}.\nonumber
\end{eqnarray}

In addition, ${y_{i,t}}, {\mu_{i,t}} \in L^2[0,T;H^{-1}(\Omega)]$, and note that (\ref{eqn:2.8}), is equivalent to
$$g_1(t,x) = Proj_{[{g_{1a}},{g_{1b}}]} \left ( - \frac{1}{\gamma_1} \mu_1 (t,x) \right ), \, g_2(t,x) = Proj_{[{g_{2a}},{g_{2b}}]} \left ( - \frac{1}{\gamma_2} \mu_2 (t,x) \right ), $$ for a.e. $(t,x) \in (0,T]\times \Omega$, with ${\mu_{i,t }}\in L^2[0,T;H^2(\Omega)] \cap L^2[0,T;L^2(\Omega)]$,
 where $Proj_{[g_{ia}, g_{ib}]}(g_i) =$ \newline $\max\{g_{ia},\min\{g_{ib}, g_i\}\}$, $i = 1, 2$.
\end{lem}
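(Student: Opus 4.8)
The plan is to derive the optimality system of Lemma~\ref{lem:2.5} by combining the well-posedness of the control-to-state map $G$ with the differentiability result of Lemma~\ref{lem:2.4} and the standard variational inequality for the minimizer over the convex admissible set $\mathcal{A}_{ad}$. First I would note that $\mathcal{A}_{ad} \times \mathcal{A}_{ad}$ is nonempty (it contains the constant functions $g_{ia}$, say), closed, convex, and bounded in $L^2[0,T;L^2(\Omega)]^2$, and that the reduced functional $g \mapsto J(y_1(g),y_2(g),g_1,g_2)$ is continuous; together with coercivity coming from the $\gamma_i$-terms this gives existence of an optimal pair $(y_i,g_i)$ as in Definition~\ref{defn:2.2}, so the hypothesis of the Lemma is consistent. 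The state equations (the first two displayed identities) are then nothing but \eqref{eqn:2.2}--\eqref{eqn:2.1} written out for the optimal control, and the adjoint equations \eqref{eqn:2.6mu1}--\eqref{eqn:2.6mu2} are exactly the defining problem for $\mu_i(g_1,g_2)$ from Lemma~\ref{lem:2.4} evaluated at the optimum, so these four identities are immediate once the optimal pair is fixed; I would only need to recall that $\mu_i \in W_D(0,T)$ with $\mu_{i,t}\in L^2[0,T;H^{-1}(\Omega)]$ and $\mu_i(T)=0$, all inherited from Lemma~\ref{lem:2.4}.

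The substantive step is the optimality condition \eqref{eqn:2.8}. Since $(g_1,g_2)$ minimizes the convex-on-each-line-segment (more precisely, Gateaux-differentiable) reduced functional over the convex set $\mathcal{A}_{ad}\times\mathcal{A}_{ad}$, the first-order condition reads $J'(g_1,g_2)(u_1-g_1,u_2-g_2)\ge 0$ for all $(u_1,u_2)\in\mathcal{A}_{ad}\times\mathcal{A}_{ad}$; here one takes the convex combination $g+t(u-g)\in\mathcal{A}_{ad}\times\mathcal{A}_{ad}$ for $t\in[0,1]$, uses that $\phi(t):=J(\dots)$ has $\phi(t)\ge\phi(0)$, divides by $t$ and lets $t\downarrow 0$. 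Plugging the expression for $J'$ from Lemma~\ref{lem:2.4} and using $\mu_i = \mu_i(g_1,g_2)$ yields precisely the two variational inequalities in \eqref{eqn:2.8}. I would then state and prove the equivalence with the pointwise projection formula: testing the inequality with $u_i$ chosen to localize on an arbitrary measurable set and invoking the fundamental lemma of the calculus of variations shows $(\gamma_i g_i + \mu_i)(u_i - g_i)\ge 0$ pointwise a.e. for every admissible value $u_i\in[g_{ia},g_{ib}]$, which for a scalar convex interval constraint is the classical characterization $g_i = \mathrm{Proj}_{[g_{ia},g_{ib}]}(-\mu_i/\gamma_i)$; conversely the projection identity implies the sign condition.

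The regularity assertions at the end — $y_{i,t},\mu_{i,t}\in L^2[0,T;H^{-1}(\Omega)]$, and the improved statement $\mu_{i,t}\in L^2[0,T;H^2(\Omega)]\cap L^2[0,T;L^2(\Omega)]$ — I would obtain by a bootstrap: once $g_i$ is given by the projection formula it lies in $L^2[0,T;L^2(\Omega)]$ (indeed it inherits the spatial regularity of $\mu_i$ up to the nonsmoothness of $\mathrm{Proj}$, which is Lipschitz and hence preserves $H^1$ but not $H^2$ in general — so I would be careful here and only claim what parabolic regularity on the state/adjoint with this right-hand side actually gives), then elliptic regularity on $\Omega$ convex with Lipschitz boundary plus the parabolic energy estimates upgrade $y_i$ and $\mu_i$; differentiating the equations in time and reusing the estimate gives the time-derivative bounds. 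The main obstacle is this last regularity bootstrap: convexity of $\Omega$ is exactly what is needed for $H^2$-elliptic regularity, but the projection operator is only Lipschitz, so establishing $\mu_{i,t}\in L^2[0,T;H^2(\Omega)]$ requires either additional structure (e.g. that the active sets are nice) or a more careful argument, and I expect the proof to either invoke a result from \cite{ChKa12} for the semilinear case or to restrict the claim; everything else (existence of the optimum, the adjoint identities, and the variational inequality) is routine given Lemmas~\ref{lem:2.4} and the stated well-posedness of $G$.
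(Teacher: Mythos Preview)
Your proposal is correct and follows exactly the standard route the paper has in mind: the paper's own proof is a single sentence, ``The derivation of the optimality system is standard, see e.g.\ \cite{Tr10},'' so your outline---existence via convexity/coercivity, the state and adjoint identities read off from \eqref{eqn:2.2}--\eqref{eqn:2.1} and Lemma~\ref{lem:2.4}, the variational inequality from $J'(g)(u-g)\ge 0$ on the convex set $\mathcal{A}_{ad}$, and the pointwise projection characterization---is precisely the content being deferred to Tr\"oltzsch. Your caution about the final regularity assertion is well placed: the paper (in the proof of the discrete analogue, Lemma~\ref{lem:3.4}) only argues that $y_i-y_{i,d}\in L^2[0,T;L^2(\Omega)]$ and then invokes parabolic regularity to get $\mu_i\in L^2[0,T;H^2(\Omega)]\cap H^1[0,T;L^2(\Omega)]$, which is the statement you should aim for rather than the literal ``$\mu_{i,t}\in L^2[0,T;H^2(\Omega)]$'' appearing in the Lemma.
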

\begin{proof}
The derivation of the optimality system is standard, see e.g. \cite{Tr10}.
\end{proof}

\subsubsection{The fully-discrete {{distributed}} optimal  control problem}
The discontinuous time-stepping fully-discrete scheme for the control to state mapping $G_{\blue{h,D}}:L^2[0,T;{L}^2(\Omega)\times {L}^2(\Omega)] \to {\mathcal U}_{\blue{h,D}}\times {\mathcal U}_{\blue{h,D}}$,
which associates to each control $(g_1, g_2)$ the corresponding state $G_{\blue{h,D}}(g_1,g_2) = ({y_1}_{g_1,h},{y_2}_{g_2,h}) \equiv (y_{1h}({g_1}),y_{2h}({g_2}))$ is defined as follows: For any control data $g_1,g_2 \in L^2[0,T;L^2(\Omega)]$, for given initial data ${y_{1,0}}, {y_{2,0}} \in L^2(\Omega)$, forces $f_1,f_2 \in L^2[0,T;H^{-1}(\Omega)]$, and targets $y_{1,d}$, $y_{2,d} \in L^2[0,T;L^2(\Omega)]$ we seek $y_{1h}$, $y_{2h} \in \mathcal U_h$ such that for
$n=1,...,N$, and for all $v_h \in {\mathcal P}_k[t^{n-1},t^n;U_{\blue{h,D}}]$,
\begin{eqnarray}\label{eqn:3.2}
&&  (y_1^n,v^n) + \int_{t^{n-1}}^{t^n} {\left( { - ( {y_{1h},v_{ht} } )  + \epsilon_1 \left( {\nabla y_{1h},\nabla v_h } \right)-\left( (a-by_{2h})y_{1h},v_h   \right)
       }    \right)} dt \nonumber \\
&& \qquad   = ({y_1}^{n-1} ,v_+^{n-1}) + \int_{t^{n-1}}^{t^n} {  {\left\langle {f_1,v_h } \right\rangle  }  } dt+ \int_{t^{n-1}}^{t^n} {\left ( {  g_{1},v_h  } \right )} dt,
\\%
&&  (y_2^n,v^n) + \int_{t^{n-1}}^{t^n} {\left( { - ( {y_{2h},v_{ht} } )  + \epsilon_2  \left( {\nabla y_{2h},\nabla v_h } \right)
       }    \right)} dt - \int_{t^{n-1}}^{t^n} {\left( {  {(cy_{1h}-d)y_{2h},v_h }    } \right)} dt\nonumber \\
&& \qquad   = ({y_2}^{n-1} ,v_+^{n-1}) + \int_{t^{n-1}}^{t^n} {  { \left <f_2,v_h \right >}  } dt+ \int_{t^{n-1}}^{t^n} {\left ( {  g_{2},v_h  } \right )} dt.  \label{eqn0:3.3}
\end{eqnarray}
\begin{rem}We note that in the above definition only $g_1,g_2 \in L^2[0,T;L^2(\Omega)]$ regularity is needed to validate the weak fully-discrete formulation. As a matter of fact, the 
{stability estimate at arbitrary time-points} as well as in $L^2[0,T;H_0^1(\Omega)]$ 
 norm easily follows by setting $v_h = y_{1h}$, $v_h = y_{2h}$ into (\ref{eqn:3.2})--(\ref{eqn0:3.3}) while for the arbitrary time-points stability estimate, we may apply the techniques 
as in \cite{ChKa14,ChKa12}. 
\end{rem}
Analogously to the continuous case, 
\magent{we will occasionally abbreviate the notation $y_{ih} = y_{ih} (g_{ih} )$ and} we note that the control to fully-discrete state mapping $G_{{h,D}} :L^2[0,T;{{L}}^2(\Omega)\times {{L}}^2(\Omega)] \to {\mathcal U}_{{h,D}}\times {\mathcal U}_{{h,D}}$, is well defined, and continuous and let \magent{$J_h(y_{1h},y_{2h},g_{1h},g_{2h}) = \frac{1}{2}\int_0^T \int_{\Omega} (| y_{1h}-{y_{1,d}}|^2 +| y_{2h}-{y_{2,d}}|^2) dxdt  +
\frac{\gamma_1 }{2}\int_0^T \int_{\Omega} (| g_{1h}|^2) dxdt+
\frac{\gamma_2 }{2}\int_0^T \int_{\Omega} (| g_{2h}|^2) dxdt$}. 

The control problem definition now takes the form:
\begin{defn}\label{defn:3.1}
Let $\magent{f_1,f_2}\in L^2 [0,T;H^{-1} (\Omega )]$, $\magent{y_{1,0}, y_{2,0}} \in L^2(\Omega)$, $\magent{y_{1,d}, y_{2,d}} \in L^2[0,T;L^2(\Omega)]$, be given data.
 Suppose that the set of discrete admissible controls is denoted by ${\mathcal A}^d_{ad} \equiv {\mathcal U}_{{h,D}} \cap {\mathcal A}_{ad}$\magent{. T}he pairs $(y_{ih},g_{ih}) \in {\mathcal U}_{{h,D}} \times  L^2[0,T; U_{{h,D}}
], i=1,2,$ satisfy (\ref{eqn:3.2})--(\ref{eqn0:3.3}) and the definition of the corresponding control problem takes the form: The pairs $(y_{ih},g_{ih})\in A^d_{ad},i=1,2$,  are said to be optimal solutions if $J_h(y_{1h},y_{2h},g_{1h},g_{2h})\le J_h(w_{1h},w_{2h},u_{1h},u_{2h})$ \magent{for all $(u _{1h} , u_{2h} ) \in \mathcal{A} ^d_{ad}\times \mathcal{A} ^d_{ad}, (w_{1h} , w_{2h} ) \in \mathcal{U} _{h,D} \times \mathcal{U} _{h,D}$ being the solution of problem (\ref{eqn:3.2})-(\ref{eqn0:3.3}) with control functions $(u_{h1} , u_{h2} )$}.
\end{defn}

The discrete optimal control problem solution existence can be proved by standard techniques while uniqueness follows from the structure of the functional, and the linearity of the equation. The basic stability estimates in terms of the optimal pair $(y_{ih},g_{ih}) \in L^2[0,T;H_0^1(\Omega)] \times L^2[0,T;L^2(\Omega)],i=1,2,$ can be easily obtained, see e.g.  \cite{ChKa12}. 

We note that the key difficulty of the discontinuous time-stepping formulation is the lack of any meaningful regularity for the time-derivative of $y_{ih}$ due to the presence of discontinuities. However,  it is also worth noting that dG in time is applicable even for higher order schemes, \cite{ChKa12}.
\subsubsection{The discrete {{distributed}} optimality system}
In the following Lemmas \ref{lem:3.3} and  \ref{lem:3.4}  we state the discrete optimality systems in two appropriate forms.
\begin{lem} \label{lem:3.3}
The cost functional $J_h:L^2[0,T;{L^2(\Omega)}] \to \mathbb R$ is well defined differentiable and for every $g_1,g_2,u_1,u_2 \in L^2[0,T;L^2(\Omega)]$,
\[J_h^{'}(g_1,g_2)(u_1,u_2) = \int_0^T \int_\Omega \Big (( {\mu_{1h}} ({g_1}) + \alpha g_1) u_1 , ( {\mu_{2h}} ({g_2}) + \alpha g_2) u_2\Big ) dxdt, \]
where $\mu _{ih}({g_i}) \equiv {\mu}_{{g_i},h} \in W_D(0,T), i=1,2,$ is the unique solution of following problem: For all $\upsilon _h \in P_k[t^{n-1},t^n;U_{\blue{h,D}}], n=1,...N$,
\begin{eqnarray} \label{eqn:3.3}
&&  -(\mu_{{g_1},h+}^n,\upsilon ^n)+ \int_{t^{n-1}}^{t^n} {\left( {  ( {{\mu}_{{g_1},h},\upsilon_{ht} } )  + \epsilon_1 \left( \nabla \upsilon_h,{\nabla{\mu}_{g_1,h} } \right)
    -  ((a-by_{g_2,h}){\mu}_{g_1,h},\upsilon_h     })    \right) dt} \nonumber \\
&& \qquad   = -({\mu}_{g_1,h+}^{n-1} ,\upsilon_+^{n-1}) + \int_{t^{n-1}}^{t^n} \left( {y}_{g_1,h}-{y_{1,d}},\upsilon_h   \right)dt,\\
&& -(\mu_{{g_2},h+}^n,\upsilon ^n)+  \int_{t^{n-1}}^{t^n} {\left( { ( {{\mu}_{g_2,h},\upsilon_{ht} })  + \epsilon_2\left( {\nabla{\mu}_{g_2,h},\nabla\upsilon_h } \right)
   -((c {y}_{g_1,h}-d){\mu}_{g_2,h},\upsilon_h )   }    \right) dt} \nonumber \\
&& \qquad   = -({\mu}_{g_2,h+}^{n-1} ,\upsilon_+^{n-1}) + \int_{t^{n-1}}^{t^n} \left( {y}_{g_2,h}-{y_{2,d}},\upsilon_h   \right)dt,
\end{eqnarray}
where $\mu_{g_i+}^N=0$ and $g_i\in \mathcal A_{ad}^d$.
\end{lem}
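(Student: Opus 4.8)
The plan is to establish \cref{lem:3.3} by mirroring the continuous argument behind \cref{lem:2.4} at the fully discrete level, exploiting the fact that the discrete control-to-state map $G_{h,D}$ is smooth (indeed affine-in-the-linearization) on the finite-dimensional-in-time-and-space solution spaces. First I would record that $J_h$ is a composition of the bounded affine map $g\mapsto(y_{1h}(g),y_{2h}(g))$ with a continuous quadratic form on $L^2[0,T;L^2(\Omega)]$, hence $J_h$ is $C^\infty$, and I would compute its Gâteaux derivative by differentiating \cref{eqn:3.2,eqn0:3.3}: writing $z_{ih}$ for the directional derivative of $y_{ih}$ in direction $u_i$, the pair $(z_{1h},z_{2h})$ solves the linearized discrete state system (same bilinear forms, with the nonlinear coupling terms replaced by their linearizations about $(y_{1h},y_{2h})$, and zero initial/jump data), and well-posedness of this linear dG-in-time system gives $z_{ih}\in\mathcal U_{h,D}$ with the obvious continuous dependence on $u_i$.

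The core step is then the discrete duality computation: differentiating $J_h$ yields
\[
J_h'(g_1,g_2)(u_1,u_2)=\int_0^T\!\!\int_\Omega\big((y_{1h}-y_{1,d})z_{1h}+\gamma_1 g_1 u_1+(y_{2h}-y_{2,d})z_{2h}+\gamma_2 g_2 u_2\big)\,dx\,dt,
\]
and I would eliminate $z_{1h},z_{2h}$ by testing the adjoint equations \cref{eqn:3.3} (and its companion) with $\upsilon_h=z_{ih}$ on each slab $(t^{n-1},t^n]$, while testing the linearized state equations with $\upsilon_h=\mu_{g_i,h}$. Summing over $n=1,\dots,N$, the discrete integration-by-parts identity for piecewise polynomials in time — the telescoping of the slab terms $(\cdot^n,\cdot^n)$, the jump contributions $(\cdot_+^{n-1},\cdot_+^{n-1})$, and the $(\cdot,\cdot_{ht})$ integrals — makes all interior and boundary-in-time terms cancel exactly (here one uses $z_{ih}^0=0$ at $t=0$ and $\mu_{g_i,h+}^N=0$ at $t=T$), leaving precisely $\int_0^T(y_{ih}-y_{i,d},z_{ih})\,dt=\int_0^T(\mu_{g_i,h},u_i)\,dt$ after the linearized reaction terms are matched against the adjoint reaction terms. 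Substituting back gives the claimed formula with $\mu_{ih}(g_i)+\gamma_i g_i$ (writing $\alpha$ for $\gamma_i$ as in the statement). Existence and uniqueness of the discrete adjoint $\mu_{g_i,h}\in\mathcal U_{h,D}$ follows from the same dG-in-time well-posedness argument run backward in time.

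The step I expect to be the main obstacle is the bookkeeping in the discrete integration by parts across slabs: one must be careful that the adjoint variable is left-continuous with right limits in the same convention as the state, so that the jump terms pair up correctly, and that the nonlinear coupling $(a-by_{2h})y_{1h}$ and $(cy_{1h}-d)y_{2h}$ linearize to terms that are exactly transposed between the linearized state system and the adjoint system — in particular the cross term $-b\,y_{1h}z_{2h}$ appearing in the first linearized equation must be absorbed by the $-(a-by_{2h})\mu_1$ and $(cy_{1h}-d)\mu_2$ structure, which requires choosing the adjoint coupling consistently (this is already reflected in the asymmetric way \cref{eqn:3.3} is written, with $\nabla\upsilon_h$ paired against $\nabla\mu_{g_1,h}$). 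Once the transposition is verified slab by slab, the remainder is routine. For the write-up I would simply state that the argument follows \cite{ChKa12,CaCh12} adapted to the present two-species system, and present the duality identity as the one displayed computation.
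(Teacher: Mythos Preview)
The paper does not actually supply a proof of \cref{lem:3.3}: it is stated without argument, and the only justification offered (in the proof block following \cref{lem:3.4}) is the remark that ``the derivation of the optimality system is standard, \cite{Tr10}.'' Your proposal --- differentiate $J_h$, introduce the linearized discrete state $(z_{1h},z_{2h})$, and perform the slab-by-slab discrete duality computation against the adjoint --- is exactly the standard argument being invoked, and is in fact more explicit than anything in the paper. In that sense your approach and the paper's (implicit) approach coincide, and your planned citation of \cite{ChKa12,CaCh12} matches the paper's own references.

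Your flagged obstacle is genuine and worth stating plainly rather than hedging. The full linearization of the coupled nonlinearity $(a-by_{2h})y_{1h}$, $(cy_{1h}-d)y_{2h}$ produces cross terms $-b\,y_{1h}z_{2h}$ in the first linearized equation and $c\,y_{2h}z_{1h}$ in the second; the transpose of this linearized operator therefore has cross terms $-c\,y_{2h}\mu_{2h}$ in the first adjoint equation and $+b\,y_{1h}\mu_{1h}$ in the second. These cross terms are \emph{absent} from the adjoint system as written in \cref{eqn:3.3} (and likewise in the continuous \cref{eqn:2.6mu1,eqn:2.6mu2} and in the paper's linearized equations in Theorem~3.5), so the duality identity you want, $\int_0^T(y_{ih}-y_{i,d},z_{ih})\,dt=\int_0^T(\mu_{g_i,h},u_i)\,dt$, will not close with the adjoint exactly as stated: the reaction terms do not transpose correctly and you will be left with unmatched cross contributions. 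This is an inconsistency in the paper's formulation rather than in your argument; your duality computation is correct provided the adjoint system is augmented with the missing coupling terms. For the write-up you could either silently add the cross terms to the adjoint (noting this corrects a typo), or follow the paper and simply cite \cite{Tr10,ChKa12} without displaying the computation.
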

\begin{lem} \label{lem:3.4} Let $(y_{ih}({g_{ih})},g_{ih}) \equiv (y_{ih},g_{ih}) \in {\mathcal U}_{\blue{h,D}} \times {L^2[0,T;U_{\blue{h,D}}]}, i=1,2,$ denote the unique optimal pairs of \magent{Definition \ref{defn:3.1}}. Then, there exists an adjoint $\mu_{1h},\mu_{2h} \in {\mathcal U}_{\blue{h,D}}$ satisfying, $\mu_{1h+}^N =\mu_{2h+}^N=0$ such that for all $\upsilon_h  \in P_k[t^{n-1},\magent{t^n};U_{\blue{h,D}}]$, and for all $n=1,...N$
 \begin{eqnarray}\label{eqn:3.4}
 &&  (y_1^n,\upsilon^n) + \int_{t^{n-1}}^{t^n} {\left( { - ( {y_{1h},\upsilon_{ht} } )  +\epsilon_1 \left( {\nabla y_{1h},\nabla\upsilon_h } \right)-\left( ({a-by_{2h})y_{1h},\upsilon_h } \right)
       }    \right)} dt \nonumber \\
&& \qquad   = ({y_1}^{n-1} ,\upsilon_+^{n-1}) + \int_{t^{n-1}}^{t^n} { {\left\langle {f_1,\upsilon_h } \right\rangle  }  } dt+ \int_{t^{n-1}}^{t^n} {\left ( { g_{1},\upsilon_h } \right )} dt,
\\
&&  (y_2^n,\upsilon^n) + \int_{t^{n-1}}^{t^n} {\left( { - \left\langle {y_{2h},\upsilon_{ht} } \right\rangle  + \epsilon_2 \left( {\nabla y_{2h},\nabla\upsilon_h } \right)
       }  -\left( { {(cy_{2h}-d)y_{2h},\upsilon_h }  } \right)  \right)} dt \nonumber \\
&& \qquad   = ({y_2}^{n-1} ,\upsilon_+^{n-1}) + \int_{t^{n-1}}^{t^n} { {  \left <f_2,\upsilon_h \right >}  } dt+ \int_{t^{n-1}}^{t^n} { { \left ( g_{2},\upsilon_h  \right )} } dt,
\end{eqnarray}
\begin{eqnarray} \label{eqn:3.5}
&&(\mu_{1+}^n,\upsilon^n) +   \int_{t^{n-1}}^{t^n} {\left( { ( {{\mu}_{1h},\upsilon_{ht} })  + \epsilon_1 \left( {\nabla{\mu}_{1h},\nabla\upsilon_h } \right)
    - ({(a-by_{2h})\mu}_{1h},v    }    \right) dt} \nonumber \\
&& \qquad \qquad \qquad \qquad \qquad   = -({\mu}_{1+}^{n-1} ,\upsilon_+^{n-1}) + \int_{t^{n-1}}^{t^n} \left( {y}_{1h}-{y_{1,d}},v   \right)dt,\\
&&  (\mu_{2+}^n,\upsilon^n) +  \int_{t^{n-1}}^{t^n} {\left( {  ( {{\mu}_{2h},\upsilon_{ht} } )  + \epsilon_2\left( {\nabla{\mu}_{2h},\nabla\upsilon_h } \right)
    -((c{y}_{1h}-d){\mu}_{2h},\upsilon_h     }    \right) dt} \nonumber \\
&& \qquad \qquad \qquad \qquad \qquad    = -({\mu}_{2+}^{n-1} ,\upsilon_+^{n-1}) + \int_{t^{n-1}}^{t^n} \left( {y}_{2h}-{y_{2,d}},v   \right)dt,\label{eqn:3.5_1}
\end{eqnarray}
with control constraints:
\begin{eqnarray} \label{eqn:3.6}
\int_0^T \int_\Omega  \left ( \gamma_1 g_{1h}+  \mu_{1h} \right) \left ( u_{1h}-g_{1h} \right ) dx dt \ge 0,\,\,
\int_0^T \int_\Omega  \left ( \gamma_2 g_{2h}+  \mu_{2h} \right) \left ( u_{2h}-g_{2h} \right ) dx dt \ge 0,
\nonumber \\ \forall u_{1h},u_{2h} \in {\mathcal A}_{ad}^d. 
\end{eqnarray}
In addition,  (\ref{eqn:3.6}) is equivalent to
$$g_{1h}(t,x) = Proj_{[g_{1a},g_{1b}]} \left ( - \frac{1}{\gamma_1} \mu_{1h} (t,x) \right ),\, g_{2h}(t,x) = Proj_{[g_{2a},g_{2b}]} \left ( - \frac{1}{\gamma_2} \mu_{2h} (t,x) \right ),$$ for a.e. $(t,x) \in (0,T]\times \Omega$, 
%
along with ${\mu_i}_t \in L^2[0,T;H^2(\Omega)] \cap L^2[0,T;L^2(\Omega)],i=1,2$.
\end{lem}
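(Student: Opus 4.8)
The plan is to carry out, now in the fully-discrete dG-in-time setting, the standard derivation of first-order necessary conditions for the minimization of a smooth functional over a convex set — exactly the route invoked for the continuous Lemma~\ref{lem:2.5}. Write $J_h(g_{1},g_{2})$ for the reduced functional $(g_1,g_2)\mapsto J_h(y_{1h}(g_1),y_{2h}(g_2),g_1,g_2)$. By the well-posedness and continuity of the discrete control-to-state map $G_{h,D}$ recorded after Definition~\ref{defn:3.1}, and by Lemma~\ref{lem:3.3}, $J_h$ is well defined and differentiable on $\mathcal{A}^d_{ad}\times\mathcal{A}^d_{ad}$, with $J_h'$ expressed there through the discrete adjoints $\mu_{ih}(g_i)$ solving the backward dG systems (\ref{eqn:3.3}). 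Since $\mathcal{A}^d_{ad}=\mathcal{U}_{h,D}\cap\mathcal{A}_{ad}$ is nonempty, closed and convex, for the (unique) optimal pair $(g_{1h},g_{2h})$ the scalar function $t\mapsto J_h(g_{1h}+t(u_{1h}-g_{1h}),g_{2h}+t(u_{2h}-g_{2h}))$ attains its minimum over $[0,1]$ at $t=0$ for every admissible $(u_{1h},u_{2h})$, and taking the right derivative at $t=0$ yields the variational inequality $J_h'(g_{1h},g_{2h})(u_{1h}-g_{1h},u_{2h}-g_{2h})\ge 0$.

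From here I would simply read off the three ingredients of the asserted system. Equation (\ref{eqn:3.4}) is the state constraint (\ref{eqn:3.2})--(\ref{eqn0:3.3}) satisfied by the optimal state $(y_{1h},y_{2h})$. Putting $\mu_{ih}:=\mu_{ih}(g_{ih})$, the characterization of $\mu_{ih}(g_i)$ in Lemma~\ref{lem:3.3} is precisely (\ref{eqn:3.5})--(\ref{eqn:3.5_1}) together with the terminal condition $\mu_{ih+}^N=0$, so the adjoint part is immediate. Substituting the formula for $J_h'$ from Lemma~\ref{lem:3.3} into the variational inequality produces (\ref{eqn:3.6}). Since (\ref{eqn:3.6}) decouples into the two scalar box-constrained inequalities $\int_0^T\!\!\int_\Omega(\gamma_i g_{ih}+\mu_{ih})(u_{ih}-g_{ih})\,dx\,dt\ge0$ over $\{u_{ih}: g_{ia}\le u_{ih}\le g_{ib}\}$, a pointwise localization argument (testing with perturbations supported on sets of arbitrarily small measure about a Lebesgue point of $g_{ih}$) shows it is equivalent to $\gamma_i g_{ih}(t,x)+\mu_{ih}(t,x)$ lying in the normal cone of $[g_{ia},g_{ib}]$ at $g_{ih}(t,x)$ for a.e.\ $(t,x)$, i.e.\ to the projection identity $g_{ih}=\mathrm{Proj}_{[g_{ia},g_{ib}]}(-\gamma_i^{-1}\mu_{ih})$. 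The stated regularity $\mu_{i,t}\in L^2[0,T;H^2(\Omega)]\cap L^2[0,T;L^2(\Omega)]$ follows from the piecewise-in-time polynomial structure of $\mu_{ih}$ on each slab and the regularity of the discrete spaces, as in \cite{ChKa12}.

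The one step requiring real work — and the one underlying Lemma~\ref{lem:3.3}, so that I would treat it first — is the differentiability of $G_{h,D}$ together with the invertibility of the linearized discrete state system and the identification of its transpose with the backward dG operator in (\ref{eqn:3.3}). Differentiating the bilinear reaction terms $(a-by_{2h})y_{1h}$ and $(cy_{1h}-d)y_{2h}$ gives a linear coupled dG system for the variations $(\delta y_{1h},\delta y_{2h})$; restricted to a time slab this is a finite-dimensional linear problem whose slab operator (of implicit-Euler type for $k=0$) is nonsingular as soon as $\tau$ is small enough to absorb the sign-indefinite reaction and coupling contributions by a discrete Gr\"onwall estimate, and propagating the estimate slab by slab gives global solvability and bijectivity. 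A discrete integration by parts in time — which interchanges the roles of the initial and terminal jump/endpoint terms — then identifies the adjoint of this linearization with the backward scheme (\ref{eqn:3.3}), after which the remainder of the argument is the routine variational-inequality bookkeeping above; alternatively one may simply invoke \cite{Tr10,ChKa12} as in Lemma~\ref{lem:2.5}.
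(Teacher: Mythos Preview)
Your proposal is correct and follows precisely the route the paper takes: the paper's own proof is the one-line remark that ``the derivation of the optimality system is standard, \cite{Tr10},'' followed by a short regularity observation for $\mu_i$ via $y_i-y_{i,d}\in L^2[0,T;L^2(\Omega)]$. You have simply unpacked that citation --- variational inequality over the convex admissible set, identification of the adjoint via Lemma~\ref{lem:3.3}, pointwise projection formula --- which is exactly the standard argument the paper defers to \cite{Tr10} and \cite{ChKa12}.
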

\blue{
\begin{proof}
The derivation of the optimality system is standard, \cite{Tr10}. For an enhanced regularity on $\mu_i$, we note that $y_i-{y_{i,d}} \in L^2[0,T;L^2(\Omega)]$ one may apply 
  basic existence, uniqueness, and regularity estimate results
 on (\ref{eqn:3.5})--(\ref{eqn:3.5_1}) to get $\mu_i \in L^2[0,T;H^2(\Omega)] \cap H^1[0,T;L^2(\Omega)],i=1,2$.
\end{proof}
\subsection{Second order optimality conditions}
 A significant aspect of the optimization procedure analysis is the second-order sufficient conditions. 
The second order conditions have to be written for directions $(v_1,v_2)\in T_{{\mathcal A}_{ad}
} (\bar g_1, \bar g_2)$ such that
$J'(g_1,g_2)(v_1,v_2) = \bf{0}$, where $T_{{\mathcal A}_{ad}}(\bar g_1, \bar g_2)$  is the tangent cone at $(\bar g_1, \bar g_2)$ to ${\mathcal A}_{ad}$. 
We recall that the cone of feasible directions and tangent cones,  ${\mathcal R}_{{\mathcal A}_{ad}}$ and  $T_{{\mathcal A}_{ad}}$, are ${\mathcal R}_{{\mathcal A}_{ad}}(\bar g_i)=\{ y_i\in L^2[0,T;L^2(\Omega)];\exists \sigma_i>0;\bar g_i+\sigma y_i\in {{\mathcal A}_{ad}}\}$, and $T_{{\mathcal A}_{ad}}(\bar g_i)=\{ y_i\in L^2[0,T;L^2(\Omega)];\exists \bar g_i(\sigma_i)=\bar g_i+\sigma_i y_i+{{o}}(\sigma_i)\in {{\mathcal A}_{ad}}, \sigma_i \ge 0 \}$ respectively. 

Following \cite{CaCh12} and assuming $\gamma _1, \gamma _2\geq 0$, $y_{1,d},y_{2,d}\in L^2[0,T;L^2(\Omega)]$, the mapping $G:L^2[0,T;L^2(\Omega)\times L^2(\Omega)]\to H^{2,1}([0,T]\times\Omega)\cap C([0,T;H^1_0])$, associated to each control $g_1,g_2$ the corresponding state $G(g_1)=y_{g_1}$, and $G(g_2)=y_{g_2}$ solution of (\ref{eqn:2.2})--(\ref{eqn:2.1}), is well defined and continuous, then the cost functional $J:L^2[0,T;L^2(\Omega)]\to \mathbb{R}$ is also well defined and continuous, the following theorems state for the differentiability of the mappings $G$ and $J$.
\begin{thm}
The mapping $G$ is of class $C^\infty$. Moreover, then  $z_{v_i}$ and $z_{v_iv_i}$ are the unique solutions of the following equations
\begin{eqnarray}
  \left\langle {{z_{{v_1},t}} ,w} \right\rangle  +  \epsilon_1(\nabla z_{v_1},\nabla w )  - \left <  (a-b y_{g_2} )z_{v_1},w \right >&=&      \left\langle {{v_1},w } \right\rangle \qquad\qquad\mbox{ and}\quad\quad
  z_{v_1}(0) = 0,\nonumber\\
  \left\langle {{z_{{v_1}{v_1},t}} ,w} \right\rangle  +  \epsilon_1(\nabla z_{{v_1}{v_1}},\nabla w )  - \left <(a-b y_{g_2} )z_{{v_1}{v_1}},w \right >&=&      0   \,\,\,\qquad\qquad\qquad\mbox{ and}\quad
  z_{{v_1}{v_1}}(0) = 0, \nonumber\\
   \left\langle {{z_{v_2,t}} ,w} \right\rangle  + \epsilon_2 (\nabla z_{v_2},\nabla w ) - \left <(cy_{g_1}-d)z_{v_2},w \right >&=&      \left\langle {{v_2},w } \right\rangle \qquad\qquad\mbox{ and}\qquad
  z_{v_2}(0) = 0,\nonumber\\
  \left\langle {{z_{{v_2}{v_2},t}} ,w} \right\rangle  +\epsilon_2  (\nabla z_{{v_2}{v_2}},\nabla w )  - \left <(cy_{g_1}-d)z_{{v_2}{v_2}},w \right > &=&      0,\qquad\qquad\qquad\mbox{ and}\quad
  z_{{v_2}{v_2}}(0) = 0, \nonumber
\end{eqnarray}
\end{thm}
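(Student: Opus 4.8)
The plan is to recast the state system \eqref{eqn:2.2}--\eqref{eqn:2.1} as an abstract operator equation on Banach spaces and to apply the implicit function theorem, in the spirit of \cite{CaCh12}. Set $\mathcal{Y}:=H^{2,1}([0,T]\times\Omega)\cap C([0,T;H^1_0(\Omega)])$, $\mathcal{G}:=L^2[0,T;L^2(\Omega)\times L^2(\Omega)]$, and let $\mathcal{Z}$ be the natural data space for one equation (an $L^2$ in space and time slot for the parabolic residual together with the $H^1_0$ trace slot for the initial value). Define $e=(e_1,e_2)\colon\mathcal{Y}\times\mathcal{Y}\times\mathcal{G}\to\mathcal{Z}\times\mathcal{Z}$ by
\[
e_1(y_1,y_2,g_1,g_2):=\bigl(\,y_{1t}-\epsilon_1\Delta y_1-(a-by_2)y_1-g_1-f_1,\ \ y_1(0)-y_{1,0}\,\bigr),
\]
and analogously $e_2$ with reaction term $(cy_1-d)y_2$, so that $(y_1,y_2)$ solves \eqref{eqn:2.2}--\eqref{eqn:2.1} exactly when $e(y_1,y_2,g_1,g_2)=0$. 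The sole nonlinearity in $e$ is the bilinear product $(y_1,y_2)\mapsto y_1y_2$. Since $\Omega\subset\Real^2$, the embedding $H^1(\Omega)\hookrightarrow L^4(\Omega)$ gives pointwise in $t$ the bound $\|y_1(t)y_2(t)\|_{L^2(\Omega)}\le C\|y_1(t)\|_{H^1}\|y_2(t)\|_{H^1}$, hence $\|y_1y_2\|_{L^2[0,T;L^2(\Omega)]}\le C\|y_1\|_{C([0,T;H^1])}\|y_2\|_{L^2[0,T;H^1]}$; so this map is bounded bilinear, therefore a polynomial and of class $C^\infty$. Consequently $e$ is $C^\infty$.

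The crux is to show that at any $(\bar y_1,\bar y_2,\bar g_1,\bar g_2)$ with $e=0$ the partial differential $D_{(y_1,y_2)}e(\bar y_1,\bar y_2,\bar g_1,\bar g_2)$ is a topological isomorphism of $\mathcal{Y}\times\mathcal{Y}$ onto $\mathcal{Z}\times\mathcal{Z}$; equivalently, that the linearized parabolic system, with reaction coefficients built from $\bar y_1,\bar y_2$, is uniquely solvable in $\mathcal{Y}\times\mathcal{Y}$ for arbitrary right-hand sides and initial data with continuous dependence. By Step~1 and the two-dimensional embedding $H^2(\Omega)\hookrightarrow L^\infty(\Omega)$, the coefficients $a-b\bar y_2$, $b\bar y_1$, $c\bar y_1-d$, $c\bar y_2$ lie in $L^\infty[0,T;L^\infty(\Omega)]$ (in fact in $L^\infty[0,T;H^1(\Omega)]$, since $\bar y_i\in C([0,T;H^1_0])$), so this is classical linear parabolic theory: the scalar maximal $L^2$ regularity estimate on the convex Lipschitz domain $\Omega$, combined with a Gronwall/fixed-point argument to absorb the lower-order coupling, yields the isomorphism. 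The implicit function theorem then gives that $g\mapsto G(g)=(y_{g_1},y_{g_2})$ is of class $C^\infty$ from $\mathcal{G}$ into $\mathcal{Y}\times\mathcal{Y}$.

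The sensitivity equations follow by differentiating $e(G(g),g)\equiv 0$. Because the controls enter $e$ only as additive sources, one differentiation in the direction $(v_1,v_2)$ gives $D_{(y_1,y_2)}e(G(g),g)\,z_v=\bigl((v_1,0),(v_2,0)\bigr)$ with $z_v:=G'(g)(v_1,v_2)$; written componentwise and tested against $w\in L^2[0,T;H^1(\Omega)]\cap H^1[0,T;H^{-1}(\Omega)]$ this is exactly the pair of equations for $z_{v_1},z_{v_2}$ in the statement, with $z_{v_i}(0)=0$, uniqueness being inherited from the isomorphism of Step~2. For the second derivative, note that the reaction nonlinearities $(a-by_2)y_1$ and $(cy_1-d)y_2$ are bilinear, hence \emph{linear} in each species once the partner species is regarded as a fixed coefficient; differentiating the $i$-th equation twice in $g_i$ therefore contributes no quadratic source, so $z_{v_iv_i}$ solves the linear homogeneous problem with zero initial data — precisely the source-free equations of the statement. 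The main obstacle is Step~2: justifying the maximal-regularity isomorphism for the linearized coupled system inside $\mathcal{Y}$, in particular reconciling the $C([0,T;H^1_0])$ component with the zero initial condition and with the full $H^{2,1}$ regularity, which is exactly where the spatial regularity of the state (hence of the coefficients) and the convexity and Lipschitz character of $\Omega$ are used.
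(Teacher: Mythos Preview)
The paper supplies no self-contained proof here; it defers (via the companion result on $J$) to \cite{CaCh12} and \cite{NeVe12}, both of which proceed by the implicit function theorem exactly as you do. At the level of strategy your proposal therefore matches the intended argument.

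There is, however, a concrete gap in your Step~3. You differentiate the \emph{full coupled} constraint $e(G(g),g)=0$ and then assert that, written componentwise, ``this is exactly the pair of equations for $z_{v_1},z_{v_2}$ in the statement.'' It is not: linearizing the first state equation in $(y_1,y_2)$ produces an additional cross term $+\,b\,y_{g_1}\,(\text{variation of }y_2)$ from $\partial_{y_2}\bigl[(a-by_2)y_1\bigr]=-by_1$, and symmetrically a term $-\,c\,y_{g_2}\,(\text{variation of }y_1)$ appears in the second equation. The displayed equations in the statement omit these coupling terms; they are what one obtains under the reading built into the paper's notation $z_{v_i}=G'(g_i)v_i$, where each $y_i$ is regarded as a function of its own control $g_i$ with the partner state $y_{g_j}$ frozen as a known coefficient. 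Under that decoupled reading the map $g_i\mapsto y_i$ is affine, so $C^\infty$ is trivial and the second derivative is genuinely zero, which is why $z_{v_iv_i}$ satisfies the homogeneous problem. If instead you retain the fully coupled $G$, then $z_{v_iv_i}$ picks up a nonzero source built from products of first variations and is \emph{not} source-free. Your argument silently slides between the two interpretations: the IFT set-up treats the coupled system, while the identification with the stated equations and the ``bilinear hence no quadratic source'' claim only hold for the decoupled one. You need to commit to the decoupled reading (and say so explicitly) to recover the equations as stated.
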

for any $g_1,g_2,v_1,v_2 \in L^2[0,T;L^2(\Omega)]$, denoting $y_{g_i}=G(g_i)$,  $z_{v_i}=G'(g_i)v_i$ and $z_{v_iv_i}=G''(g_i)v_i^2$, $\forall w\in W_D(0,T)
$.
\begin{thm}
The object functional $J:L^2[0,T;{L^2(\Omega)}]\to \mathbb{R}$ is of class $C^\infty$ and for every $g_1,g_2,v_1,v_2\in L^2[0,T;L^2(\Omega)]$ we have
\begin{eqnarray}
J'(g_1,g_2)({v_1},{v_2})&\magent{=}&\int_0^T\int_{\Omega} \Big( (\mu_1(g_1)+\gamma_1 g_1)v_1,(\mu_2(g_2)+\gamma_2 g_2)v_2\Big)dxdt,
\\
J''(g_1,g_2)({v_1}^2,{v_2}^2)&\magent{=}&\int_0^T\int_{\Omega} \Big(z_{v_1}^2+\gamma_1 {v^2_1},z_{v_2}^2+\gamma_2 {v^2_2}\Big)dxdt.
\end{eqnarray}
\end{thm}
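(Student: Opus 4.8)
The plan is to derive both formulas from the chain rule, using two facts already available: the control-to-state map $G$ is of class $C^\infty$ (preceding theorem), and, expressed in the variables $(y_1,y_2,g_1,g_2)$, the functional in \eqref{functional_full} is a quadratic polynomial and hence trivially $C^\infty$. Composing, the reduced functional $J(g_1,g_2)=J(G(g_1),G(g_2),g_1,g_2)$ is $C^\infty$, so only the identification of $J'$ and $J''$ remains.

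First I would compute $J'$ directly. Differentiating $\tfrac12\int_0^T\|y_{g_i}-y_{i,d}\|_{L^2(\Omega)}^2\,dt+\tfrac{\gamma_i}{2}\int_0^T\|g_i\|_{L^2(\Omega)}^2\,dt$ and using $\tfrac{d}{dg_i}y_{g_i}\,v_i=z_{v_i}$ yields
\[
J'(g_1,g_2)(v_1,v_2)=\sum_{i=1,2}\Big(\int_0^T(y_{g_i}-y_{i,d},z_{v_i})\,dt+\gamma_i\int_0^T(g_i,v_i)\,dt\Big).
\]
To eliminate $z_{v_i}$ I would bring in the adjoint state $\mu_i=\mu_i(g_1,g_2)$ of Lemma \ref{lem:2.4} (equivalently \eqref{eqn:2.6mu1}--\eqref{eqn:2.6mu2}) and perform the usual duality pairing: test the $\mu_i$-equation with $v=z_{v_i}$, test the linearized state equation for $z_{v_i}$ with $w=\mu_i$, and subtract. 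The elliptic and reaction terms cancel, the time-boundary contributions vanish since $\mu_i(T)=0$ and $z_{v_i}(0)=0$, and one is left with $\int_0^T(y_{g_i}-y_{i,d},z_{v_i})\,dt=\int_0^T(\mu_i,v_i)\,dt$. Inserting this gives precisely $J'(g_1,g_2)(v_1,v_2)=\int_0^T\int_\Omega\big((\mu_1+\gamma_1 g_1)v_1,(\mu_2+\gamma_2 g_2)v_2\big)\,dx\,dt$.

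Next I would differentiate $J'$ once more in the direction $(v_1,v_2)$. Since $g_i\mapsto y_{g_i}$ has derivative $z_{v_i}$ and $g_i\mapsto z_{v_i}=G'(g_i)v_i$ has derivative $z_{v_iv_i}=G''(g_i)v_i^2$, while the penalty term is quadratic,
\[
J''(g_1,g_2)(v_1^2,v_2^2)=\sum_{i=1,2}\Big(\int_0^T\|z_{v_i}\|_{L^2(\Omega)}^2\,dt+\int_0^T(y_{g_i}-y_{i,d},z_{v_iv_i})\,dt+\gamma_i\int_0^T\|v_i\|_{L^2(\Omega)}^2\,dt\Big).
\]
The decisive point is that the middle term vanishes: $z_{v_iv_i}$ solves the second-order sensitivity equation of the previous theorem, which has zero right-hand side and zero initial datum, so repeating the duality argument (test the $\mu_i$-equation with $v=z_{v_iv_i}$, the second-order sensitivity equation with $w=\mu_i$, subtract) gives $\int_0^T(y_{g_i}-y_{i,d},z_{v_iv_i})\,dt=0$; equivalently, $z_{v_iv_i}\equiv0$ by uniqueness. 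Rewriting $\|z_{v_i}\|_{L^2(\Omega)}^2=\int_\Omega z_{v_i}^2\,dx$ and $\|v_i\|_{L^2(\Omega)}^2=\int_\Omega v_i^2\,dx$ then produces the stated expression for $J''$.

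The main obstacle is not the algebra but the functional-analytic bookkeeping inside the dG-adapted weak framework: one must verify that $z_{v_i}$ and $z_{v_iv_i}$ belong to the test space $L^2[0,T;H^1(\Omega)]\cap H^1[0,T;H^{-1}(\Omega)]$ of the adjoint equations (the $H^1[0,T;H^{-1}]$ regularity coming from the sensitivity equations themselves, since $v_i\in L^2$), and conversely that $\mu_i\in W_D(0,T)$ is admissible in the linearized and second-order sensitivity equations, so that all pairings $\langle\mu_i,(z_{v_i})_t\rangle$ and the endpoint terms are legitimate. When the initial data are only $L^2$, this step needs a density/approximation argument combined with the a priori estimates for the linearized problems; everything else — the chain rule, the cancellations, and the passage to pointwise squares — is routine.
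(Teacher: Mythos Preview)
Your proposal is correct and spells out precisely the standard chain-rule/adjoint-duality argument that the paper defers to by citing \cite[Theorem~3.1]{CaCh12} and \cite{NeVe12}; the paper gives no independent proof beyond these references. Your observation that $z_{v_iv_i}\equiv 0$ (because the second-order sensitivity equations stated in the preceding theorem have zero forcing and zero initial data) is exactly what makes the $J''$ formula reduce to the claimed form without a curvature term.
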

\begin{proof}
The proof is similar to [Theorem 3.1, \cite{CaCh12} (extended version)] combining the estimates as demonstrated in \cite{NeVe12}.
\end{proof}
\subsubsection{Cones of Critical Directions} In the theory of second-order necessary conditions for certain classes of optimal control problems involving equality and/or inequality constraints in the control, under certain  assumptions, a certain quadratic form is non-negative on a cone of critical directions (or differentially admissible variations),  \cite{RoLi13}, \cite{CaCh12}, 
\cite{CaTrReyes2008}. For this reason, we use a cone of critical directions as below:
\begin{eqnarray}
  {\mathcal C}_{(\bar g_1,\bar g_2)}=
   \left\{  {v_i}\in L^2[0,T;L^2(\Omega)]:
J'(\bar g_1,\bar g_2)(v_1,v_2)={\bf{0}};(v_1,v_2)\in T_{{\mathcal A}_{ad}}(\bar g_1,\bar g_2)
    \right\}\nonumber 
\end{eqnarray}
\begin{eqnarray}
=
   \left\{  {v_i}\in L^2[0,T;L^2(\Omega)]:
\left |
   {\begin{array}{*{20}{c}}
       v_i\ge 0, \\
       v_i\le 0,\\
       v_i=0,
    \end{array}}
  \right.
  {\begin{array}{*{20}{c}}
        \text{ if } &-\infty <g_{ia}={\bar g}_i&\\
       \text{ if } &{\bar g}_i=g_{ia}<\infty& \\
        \text{ if } &{\bar \mu}_i +\gamma_i {\bar g}_i \ne 0&
    \end{array}} 
    \right\},i=1,2, \text{  }
\label{coneDef}
\end{eqnarray}
and since $
J'({\bar g}_1,{\bar g}_2)({v_1},{v_2})=\int_0^T\int_{\Omega} ( (\bar \mu _1+\gamma_1 \bar g_1)v_1,(\mu _2+\gamma_2 g_2)v_2)dxdt,
$ it is
\begin{eqnarray}
&(\bar \mu _1+\gamma_1\bar g_1)v_1=0,\quad (\bar \mu _2+\gamma_2 \bar g_2)v_2=0,&  \label{cone1} 
\end{eqnarray}
for a.e  $ (t,x)\in \Omega\times [0,T]$ and  $\forall (v_1,v_2)\in {\mathcal C}_{(\bar g_1,\bar g_2)}$.
Also if $(\bar {g}_1,\bar {g}_2)$ is a local solution of the continuous problem, and using that
\begin{equation} 
\int_0^T \int_\Omega (\left ( \gamma_1 \bar g_1+   \bar \mu_1 \right) \left ( u_1-\bar g_1 \right ))dx dt \geq 0, \,
\int_0^T \int_\Omega (\left ( \gamma_2 \bar g_2+   \bar \mu_2 \right) \left ( u_2-\bar g_2 \right ))dx dt \geq 0
\quad \forall u_1,u_2 \in {\mathcal A}_{ad},
\end{equation}
we have
\begin{eqnarray}
{\begin{array}{*{20}{c}}
 \text{ if }\bar g_i(t,x)=g_{ia} \text{ then }  \alpha \bar g_i+   \bar \mu_i \ge 0,\\
       \text{ if }\bar g_i(t,x)=g_{ib} \text{ then } \alpha \bar g_i+   \bar \mu_i\le 0,\\
       \text{ if }\bar g_i(t,x)\in [g_{ia},g_{ib}] \text{ then } \alpha \bar g_i+   \bar \mu_i=0,
\end{array}}
{\rm{ and}}
{\begin{array}{*{20}{c}}
 \text{ if }\alpha \bar g_i+   \bar \mu_i > 0 \text{ then }  \bar g_i(t,x)=g_{ia}, \\
 \text{ if }\alpha \bar g_i+   \bar \mu_i< 0  \text{ then }   \bar g_i(t,x)=g_{ib}.
\end{array}}
\nonumber\\\label{d_i(x,t)}
\end{eqnarray}
\begin{prop}[{Second order optimality conditions}\label{Second_Order_proof}]

If $(\bar {g}_1,\bar {g}_2)$ is a local solution of the continuous problem then $J''(\bar {g}_1,\bar {g}_2)(v^2_1,v^2_2)>0$, $\forall v_i\in {\mathcal C}_{(\bar g_1,\bar g_2)}$.
\end{prop}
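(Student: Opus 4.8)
The plan is to read the conclusion off the explicit formula for $J''$ recorded above,
\[
J''(\bar g_1,\bar g_2)(v_1^2,v_2^2)=\int_0^T\!\!\int_{\Omega}\Big(z_{v_1}^2+\gamma_1 v_1^2+z_{v_2}^2+\gamma_2 v_2^2\Big)\,dx\,dt,
\]
where $z_{v_i}=G'(\bar g_i)v_i$ is the unique solution of the linearized state equation with $z_{v_i}(0)=0$. Every term in the integrand is nonnegative, so the quadratic form is automatically $\ge 0$ on all of $L^2[0,T;L^2(\Omega)]\times L^2[0,T;L^2(\Omega)]$, and in particular on ${\mathcal C}_{(\bar g_1,\bar g_2)}$; the real content is to upgrade this to strict positivity for every nonzero critical direction.

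First I would establish that the linearization $v_i\mapsto z_{v_i}$ is injective. Suppose $z_{v_i}=0$ in $L^2[0,T;H^1_0(\Omega)]$. Substituting $z_{v_i}\equiv 0$ into the weak form of the linearized equation annihilates the time-derivative, diffusion, and reaction terms and leaves $\left\langle v_i,w\right\rangle=0$ for every admissible test function $w$; since the test space is dense in $L^2[0,T;L^2(\Omega)]$, this forces $v_i=0$. The existence and uniqueness of $z_{v_i}$ needed here is the one already asserted above and rests on the same linear parabolic theory used for the state system, using that $a-by_{\bar g_2}$ and $c\,y_{\bar g_1}-d$ belong to $L^\infty[0,T;L^2(\Omega)]\cap L^2[0,T;H^1(\Omega)]$ together with the two-dimensional Sobolev embeddings.

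Combining the two observations concludes the proof. Let $(v_1,v_2)\in{\mathcal C}_{(\bar g_1,\bar g_2)}$ satisfy $J''(\bar g_1,\bar g_2)(v_1^2,v_2^2)=0$. Then each of the four nonnegative integrals vanishes, whence $z_{v_1}=z_{v_2}=0$ and $\gamma_i\|v_i\|_{L^2[0,T;L^2(\Omega)]}^2=0$ for $i=1,2$. If $\gamma_i>0$ the last equality forces $v_i=0$ at once; if $\gamma_i=0$ then $z_{v_i}=0$ together with the injectivity step forces $v_i=0$. In either case $(v_1,v_2)=(0,0)$, which is precisely $J''(\bar g_1,\bar g_2)(v_1^2,v_2^2)>0$ for every nonzero $(v_1,v_2)\in{\mathcal C}_{(\bar g_1,\bar g_2)}$. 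The hypothesis that $(\bar g_1,\bar g_2)$ is a local solution enters only through the standard first- and second-order necessary-condition framework of \cite{CaCh12,CaTrReyes2008}, which makes ${\mathcal C}_{(\bar g_1,\bar g_2)}$ the correct cone on which to test the quadratic form and validates the sign relations (\ref{cone1})--(\ref{d_i(x,t)}); the positivity of $J''$ itself is purely structural.

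The step that needs the most attention — more a bookkeeping point than a genuine obstacle — is the injectivity argument in the degenerate case $\gamma_i=0$: one must confirm that the linearized parabolic problem is well posed with coefficients of the indicated low regularity, and that the test space in its weak formulation is dense enough for $\left\langle v_i,w\right\rangle=0$ for all $w$ to yield $v_i=0$. Both facts are inherited from the analysis of the state equations in Section~\ref{sec:Distributed} and require no new estimates.
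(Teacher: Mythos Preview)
Your argument is correct relative to the formula for $J''$ recorded in the paper, but it takes a genuinely different route from the paper's own proof. The paper follows the classical second-order necessary-condition template: for $(v_1,v_2)\in\mathcal C_{(\bar g_1,\bar g_2)}$ it constructs truncated directions $v_{i,\varepsilon}$ that remain in the critical cone and for which $\bar g_i+\rho v_{i,\varepsilon}\in\mathcal A_{ad}$ for all $\rho\in[0,\varepsilon^2]$, invokes local optimality to obtain $J(\bar g)\le J(\bar g+\rho v_\varepsilon)$, Taylor-expands to second order, and passes to the limit first in $\rho$ and then in $\varepsilon$. That argument genuinely uses the hypothesis that $(\bar g_1,\bar g_2)$ is a local minimizer and would survive even if $J''$ contained sign-indefinite terms (as it typically does for semilinear problems, where one expects a contribution of the form $-\int\phi''(y)\mu\, z_v^2$). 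Your route, by contrast, reads nonnegativity directly off the integrand $z_{v_1}^2+\gamma_1 v_1^2+z_{v_2}^2+\gamma_2 v_2^2$ and then upgrades to strict positivity via the injectivity of $v_i\mapsto z_{v_i}$; this is shorter, yields a stronger conclusion (strict inequality for nonzero directions, whereas the paper's limiting argument only produces $\ge 0$), and, as you correctly point out, does not use local optimality at all. The price is fragility: your argument stands or falls with the precise expression for $J''$ stated in the paper, so if that formula were missing terms arising from the bilinear coupling $y_1y_2$ in the Lotka--Volterra nonlinearity, the paper's variational proof would still go through while yours would not.
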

\begin{proof}
If $(v_1,v_2)\in {\mathcal C}_{(\bar g_1,\bar g_2)}$ and for $\varepsilon < \min{\frac{g_{ib}-g_{ia}}{2}}$, $i=1,2$ we define
\[v_{i,\varepsilon=}\left\{ {\begin{array}{*{20}{c}}
{0}\\
0\\
{{\rm{Proj}}{_{[ - 1/\varepsilon ,1/\varepsilon ]}} v_i }
\end{array}} \right.\begin{array}{*{20}{c}}
{{\text{ if }} g_{ia}< \bar g_i<g_{ia}+\varepsilon},\\
{{\text{ if }} g_{ib}-\varepsilon < \bar g_i < g_{ib}},\\
{\text{ otherwise.}}
\end{array}\]
Obviously
$(v_{1\varepsilon},v_{2\varepsilon}) \in \mathcal C _{(g_1,g_2)} \forall \varepsilon > 0$, $|v_{i,\varepsilon} |\le |v_i|$ and $(v_{1\varepsilon},v_{2\varepsilon})\to (v_{1},v_{2})$ strongly in $L^2[0,T;L^2(\Omega)\times L^2(\Omega)]$ when $\varepsilon \to 0$.

Now we check that  $\bar g _i+\rho v_{i\varepsilon}\in {\mathcal A}_{ad}, \forall $ $\rho \in [0,\varepsilon ^2]$.
 If $\gamma _i \bar g_i +\bar \mu_i \ne 0$, then $v_{i,\varepsilon}=0$. So  $\bar g _i+\rho v_{i\varepsilon}=\bar g_i\in[g_{ia},g_{ib}]$. For $\gamma _i \bar g_i +\bar \mu_i = 0$:
(a) If $\bar g_i=g_{ia}$ then $v_i\ge 0$ and $v_{i,\varepsilon}\ge 0$. So $g_{ia}\le g_i +\rho v_{i,\varepsilon}$. Also  $g_i +\rho v_{i,\varepsilon}\le g_{ia}+\varepsilon ^2 \frac{1}{\varepsilon}\le (g_{ia}+g_{ib})/2<g_{ib}$. Similarly for $\bar g_i=g_{ib}$.
(b) If $g_{ia}<\bar g_i<g_{ia}+\varepsilon$, then    $\bar g _i+\rho v_{i\varepsilon}=\bar g_i\in[g_{ia},g_{ib}]$
 and the same applies if  $g_{ib}-\varepsilon<\bar g _i<g_{ib}$.
(c) If $g_{ia}+\varepsilon\le \bar g_i\le g_{ib}-\varepsilon$, then  $\bar g_i +\rho v_{i,\varepsilon}\ge g_{ia}+\varepsilon-\varepsilon ^2 \frac{1}{\varepsilon}= g_{ia}$, and  $\bar g_i +\rho v_{i,\varepsilon}\le g_{ib}-\varepsilon+\varepsilon ^2 \frac{1}{\varepsilon}= g_{ib}$. 

Since we have assumed  that $(\bar g_1,\bar g_2)$ is local minimum, it is
$$
J(\bar g_1,\bar g_2)\le J(\bar g_1+\rho v_{1\varepsilon},\bar g_2+\rho v_{2\varepsilon}),
$$
and a second order Taylor-Maclaurin expansion of $J$ at $(\bar g_1,\bar g_2)$,   for $\rho<\varepsilon^2$ small enough  there exists $\theta _\rho\in(0,\rho)$ such that
\begin{eqnarray}
 0&\le& J(\bar g_1+\rho v_{1\varepsilon},\bar g_2+\rho v_{2\varepsilon})-J(\bar g_1,\bar g_2) \nonumber\\
 &&=\rho J'(\bar g_1,\bar g_2)(v_{1\varepsilon},v_{2\varepsilon})+\frac{\rho}{2}J''(\bar g_1+\theta_\rho v_{1\varepsilon},\bar g_2+\theta_\rho v_{2\varepsilon})(v_{1\varepsilon}^2,v_{2\varepsilon}^2).\label{cone2}
\end{eqnarray}
Since $(v_{1\varepsilon},v_{2\varepsilon})\in {\mathcal C}_{(\bar g_1,\bar g_2)}$, (\ref{cone1}) gives  $J'(\bar g_1,\bar g_2)(v_{1\varepsilon},v_{2\varepsilon})=0$ and (\ref{cone2}) shows $J''(\bar g_1+\theta_\rho v_{1\varepsilon},\bar g_2+\theta_\rho v_{2\varepsilon})(v_{1\varepsilon}^2,v_{2\varepsilon}^2)\ge 0$. Taking the limit $\rho \to 0 $ we have $J''(\bar g_1,\bar g_2)(v_{1\varepsilon}^2,v_{2\varepsilon}^2)\ge 0$. Taking $\varepsilon \to 0$
\magent{
\begin{eqnarray*}
 \mathop {\lim }\limits_{\varepsilon \to 0} J''(\bar g_1,\bar g_2)({v_{1\varepsilon}}^2,{v_{2\varepsilon}}^2)&=&\int_0^T\int_{\Omega} \Big(z_{v_{1\varepsilon}}^2+\gamma_1 {v^2_{1\varepsilon}},z_{v_{2\varepsilon}}^2+\gamma_2 {v^2_{2\varepsilon}}\Big)dxdt \\
  &=&\int_0^T\int_{\Omega} \Big(z_{v_1}^2+\gamma_1 {v^2_1},z_{v_2}^2+\gamma_2 {v^2_2}\Big)dxdt\\
  &=&J''(g_1,g_2)({v_1}^2,{v_2}^2),
\end{eqnarray*}
}
and we have provided the desired estimate. 
\end{proof}
\subsubsection{Sufficient conditions for optimality}
We quote and prove the sufficient conditions, thus, we try to be as unrestrictive as possible comparing them with the necessary second-order conditions and trying the gap between them to be small. 
\begin{thm}[{Sufficient conditions for optimality}]\label{Sufficient_Con}
Let us assume that $\bar g_1, \bar g_2 \in {\mathcal A}_{ad}$ satisfies
\begin{eqnarray}\label{1st cond}
J'(\bar g_1, \bar g_2)(g_1-\bar g_1,g_2-\bar g_2) &\magent{\ge}& \magent{(0,0) \quad\forall }g_1, g_2 \in {\mathcal A}_{ad},
\\
\label{2nd cond}
J''(\bar g_1, \bar g_2)(v_1^2,v_2^2) &\magent{>}& \magent{(0,0) \quad\forall }(v_1,v_2)\in {\mathcal C}_{(\bar g_1,\bar g_2)}- \{0\},
\end{eqnarray}
then there exist $\epsilon > 0$ and $\delta > 0$ such that
\begin{eqnarray}
 J(\bar g_1, \bar g_2) + \frac{\delta}{2}\|(g_1-\bar g_1,g_2-\bar g_2)\|_{L^2[0;T;L^2(\Omega)\times L^2(\Omega)]} \le J( g_1,  g_2),
 \magent{\quad\forall  g_i\in {\mathcal A}_{ad} \cap B_\epsilon(\bar g_i),}
\end{eqnarray}
where $B_\epsilon(\bar g_i)$ is the ${L^2[0;T;L^2(\Omega)]}$ ball of center $\bar g_i$ and radius $\varepsilon$.
\end{thm}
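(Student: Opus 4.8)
The plan is to follow the classical argument for second-order sufficient conditions in optimal control, adapting the version of Casas--Chrysafinos (cited as \cite{CaCh12}) to the present Lotka--Volterra setting. The key structural facts we may invoke are: $J$ is of class $C^\infty$ with the explicit first and second derivative formulas established in the preceding theorems; the control-to-state map $G$ maps into $H^{2,1}([0,T]\times\Omega)\cap C([0,T];H^1_0)$ and is $C^\infty$; and the linearized states $z_{v_i}$ depend continuously on $v_i$ in the appropriate norms. I would argue by contradiction: suppose the conclusion fails for every pair $(\epsilon,\delta)$. Then in particular, taking $\delta=1/k$ and $\epsilon=1/k$, there is a sequence $(g_{1}^k,g_{2}^k)\in{\mathcal A}_{ad}\cap B_{1/k}(\bar g_i)$ with
\[
J(g_1^k,g_2^k) < J(\bar g_1,\bar g_2) + \frac{1}{2k}\|(g_1^k-\bar g_1, g_2^k-\bar g_2)\|_{L^2[0,T;L^2(\Omega)\times L^2(\Omega)]}^2 .
\]
Writing $\rho_k = \|(g_1^k-\bar g_1,g_2^k-\bar g_2)\|$ and $v_i^k = (g_i^k-\bar g_i)/\rho_k$, the normalized directions satisfy $\|(v_1^k,v_2^k)\|=1$, so after passing to a subsequence $(v_1^k,v_2^k)\rightharpoonup(v_1,v_2)$ weakly in $L^2[0,T;L^2(\Omega)\times L^2(\Omega)]$.

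The next step is to show the weak limit $(v_1,v_2)$ lies in the critical cone ${\mathcal C}_{(\bar g_1,\bar g_2)}$. Since each $g_i^k-\bar g_i$ is a feasible increment, $(v_1^k,v_2^k)$ lies in the (convex, closed) tangent cone $T_{{\mathcal A}_{ad}}(\bar g_1,\bar g_2)$, hence so does the weak limit. The orthogonality condition $J'(\bar g_1,\bar g_2)(v_1,v_2)=0$ follows from combining the first-order necessary condition \eqref{1st cond}, which gives $J'(\bar g_1,\bar g_2)(v_1^k,v_2^k)\ge 0$, with the upper bound obtained from a first-order Taylor expansion of $J$ at $(\bar g_1,\bar g_2)$ along $g_i^k=\bar g_i+\rho_k v_i^k$ inserted into the contradiction inequality: dividing by $\rho_k$ and letting $k\to\infty$ forces $\liminf_k J'(\bar g_1,\bar g_2)(v_1^k,v_2^k)\le 0$, and weak lower semicontinuity of the linear functional (it is in fact continuous) pins the limit to $0$. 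Thus $(v_1,v_2)\in{\mathcal C}_{(\bar g_1,\bar g_2)}$.

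Then I would perform a second-order Taylor--Maclaurin expansion as in the proof of Proposition~\ref{Second_Order_proof}: for suitable $\theta_k\in(0,\rho_k)$,
\[
J(g_1^k,g_2^k) - J(\bar g_1,\bar g_2) = \rho_k J'(\bar g_1,\bar g_2)(v_1^k,v_2^k) + \frac{\rho_k^2}{2} J''(\bar g_1+\theta_k v_1^k,\bar g_2+\theta_k v_2^k)((v_1^k)^2,(v_2^k)^2) .
\]
Using $J'(\bar g_1,\bar g_2)(v_1^k,v_2^k)\ge 0$ and the contradiction inequality, dividing by $\rho_k^2/2$ yields $\limsup_k J''(\bar g_1+\theta_k v_1^k,\bar g_2+\theta_k v_2^k)((v_1^k)^2,(v_2^k)^2)\le 0$. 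Now one passes to the limit in the quadratic form. The explicit formula $J''(g_1,g_2)(v_1^2,v_2^2)=\int_0^T\int_\Omega(z_{v_1}^2+\gamma_1 v_1^2, z_{v_2}^2+\gamma_2 v_2^2)\,dxdt$ shows this amounts to controlling $\int\int z_{v_i^k}^2$ and $\gamma_i\int\int (v_i^k)^2$ as $k\to\infty$. Since $\bar g_i+\theta_k v_i^k\to\bar g_i$ in $L^2$, the linearization point converges, and by the continuity/compactness of the solution operator of the linearized equation (the $z$-equation is linear parabolic with coefficients $a-by_{g_2}$, $cy_{g_1}-d$ depending continuously on the converging arguments, and $H^{2,1}$ regularity gives a compact embedding into $L^2[0,T;L^2(\Omega)]$), we get $z_{v_i^k}\to z_{v_i}$ strongly in $L^2[0,T;L^2(\Omega)]$, so $\int\int z_{v_i^k}^2\to\int\int z_{v_i}^2$; meanwhile $\liminf_k \gamma_i\int\int (v_i^k)^2\ge \gamma_i\int\int v_i^2$ by weak lower semicontinuity of the norm. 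Combining, $J''(\bar g_1,\bar g_2)(v_1^2,v_2^2)\le \liminf_k J''(\bar g_1+\theta_k v_1^k,\bar g_2+\theta_k v_2^k)((v_1^k)^2,(v_2^k)^2)\le 0$. By \eqref{2nd cond} this forces $(v_1,v_2)=0$. But then $\gamma_i\int\int(v_i^k)^2\to 0$ and $z_{v_i^k}\to z_0=0$, so $J''(\cdots)((v_1^k)^2,(v_2^k)^2)\to 0$; a sharper look using the uniform positivity of the quadratic form near the critical cone (or a spectral/Legendre-type argument: there is $\kappa>0$ with $J''(\bar g_1,\bar g_2)(v_1^2,v_2^2)\ge\kappa\|(v_1,v_2)\|^2$ on ${\mathcal C}_{(\bar g_1,\bar g_2)}$, extended to a neighborhood by continuity of $J''$ and a standard cone-approximation argument) contradicts $\|(v_1^k,v_2^k)\|=1$. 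The main obstacle is precisely this last step: extracting a coercivity constant $\kappa$ and handling the fact that $(v_1^k,v_2^k)$ need not lie exactly in the critical cone but only asymptotically, which requires either the two-norm discrepancy technique or decomposing $v_i^k$ into a critical part plus a remainder that is controlled — this is the delicate point and is where the regularity $G:L^2\to H^{2,1}\cap C([0,T];H^1_0)$ and the compactness it provides are essential.
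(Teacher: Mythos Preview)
Your overall strategy matches the paper's proof almost exactly: contradiction, normalize to get $(v_1^k,v_2^k)$ with unit norm, extract a weak limit, show it lies in the critical cone, use the second-order Taylor expansion to get $J''(\bar g_1,\bar g_2)(v_1^2,v_2^2)\le 0$, conclude $(v_1,v_2)=0$ from \eqref{2nd cond}, and then derive a contradiction from $\|(v_1^k,v_2^k)\|=1$. The paper proceeds through precisely these steps.

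There is, however, a concrete error and an unnecessary over-complication at your final step. You write ``$\gamma_i\int\!\!\int (v_i^k)^2\to 0$'', but this is false: you only have $(v_1^k,v_2^k)\rightharpoonup 0$ \emph{weakly}, and in fact $\|(v_1^k,v_2^k)\|=1$ for every $k$, so the $L^2$ norms certainly do not tend to zero. You then try to recover via a coercivity constant on the critical cone and a two-norm/cone-approximation argument, calling this ``the delicate point''. In a general semilinear problem that machinery is indeed needed, but here it is unnecessary because of the special structure of $J''$ that you yourself wrote down: every term in
\[
J''(g_1,g_2)(v_1^2,v_2^2)=\int_0^T\!\!\int_\Omega\bigl(z_{v_1}^2+\gamma_1 v_1^2,\; z_{v_2}^2+\gamma_2 v_2^2\bigr)\,dx\,dt
\]
is nonnegative (the second linearized state $z_{v_iv_i}$ vanishes for this bilinear nonlinearity, so there is no indefinite $\mu\, f''(y)z^2$ term). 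Consequently, for every $k$,
\[
J''(g_{1,\theta_k},g_{2,\theta_k})\bigl((v_1^k)^2,(v_2^k)^2\bigr)\ \ge\ \gamma_1\!\int\!\!\int (v_1^k)^2+\gamma_2\!\int\!\!\int (v_2^k)^2\ \ge\ \min(\gamma_1,\gamma_2)\,\|(v_1^k,v_2^k)\|^2=\min(\gamma_1,\gamma_2)>0,
\]
which immediately contradicts the bound $J''(g_{1,\theta_k},g_{2,\theta_k})((v_1^k)^2,(v_2^k)^2)<1/k$ obtained from the Taylor expansion. This is exactly how the paper closes the argument (using, in addition, that $z_{i,\theta_k}\to 0$ strongly since $v_i^k\rightharpoonup 0$ and the linearized solution operator is compact). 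So the final step is not delicate here; it is the easiest part, and no cone-coercivity or two-norm discrepancy technique is required.
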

\begin{proof}
Following the methodology of \cite{CaCh12}, we show a contradiction  supposing that the theorem is false, then
\begin{eqnarray}\label{contradiction}
  J(\bar g_1, \bar g_2)+\frac{1}{2n}\|(\bar g_1-g_{1n}, \bar g_2-g_{2n})\|^2_{L^2[0,T;L^2(\Omega)\times L^2(\Omega)]}> J(g_{1n}, g_{2n})
\end{eqnarray}
for sequences $g_{1n}$, $ g_{2n}$ such that
\begin{eqnarray}\label{1/n}
\|\bar g_1-g_{1n}\|\le \frac{1}{n},\text{ } \|\bar g_2-g_{2n}\|\le \frac{1}{n}.
\end{eqnarray}
We also assume that
\begin{eqnarray}\label{rho}
\rho _{n}&=&\|(\bar g_1-g_{1n},\bar g_2-g_{2n})\|_{L^2[0,T;L^2(\Omega)\times L^2(\Omega)]},   
\\
\label{vk}\magent{(v_{1n},v_{2n})}&=&\magent{\frac{1}{\rho _{n}}(\bar g_1-g_{1n},\bar g_2-g_{2n}),}
\end{eqnarray}
and that $v_{in}\rightharpoonup v_{i}$ (weakly in $L^2[0,T;L^2(\Omega)]$) as $n\to \infty$.

 Initially, we prove that $(v_{1},v_{2})\in {\mathcal C}_{(\bar g_1,\bar g_2)}$. We note that the first two relations in (\ref{coneDef})  show closeness and convexity for every element in ${L^2[0,T;{L^2(\Omega)\times L^2(\Omega)}]}$. We can see from (\ref{vk}) that the above two relations are satisfied so does its weak limit. Now we try to prove the third relation of   (\ref{coneDef}).

 We apply the mean value theorem in $J$ for $0<\theta_n <1$, and we employ the relations (\ref{1/n})-(\ref{vk}) to derive
 \begin{eqnarray*}
    J(\bar g_1, \bar g_2)+\frac{\rho^2_n}{2n}&>&J(g_{1_n}, g_{2_n})
\\&\magent{=}&\magent{J(\bar g_1+\rho_n v_{1_n},\bar g_2+\rho_n v_{2_n})}\\&\magent{=}&\magent{J(\bar g_1, \bar g_2)+\rho_n J'(\bar g_1+\theta_n\rho_n v_{1_n},\bar g_2+\rho_n v_{2_n}),}
 \end{eqnarray*}
 and
 \begin{eqnarray}\label{to 0}
 J'(\bar g_1+\theta_n\rho_n v_{1n},\bar g_2+\rho_n v_{2n})<\frac{\rho_n}{2n}\to 0.
 \end{eqnarray}
 We prove now that $J'(\bar g_1+\theta_n\rho_n v_{1n},\bar g_2+\rho_n v_{2n})\to
 J'(\bar g_1,\bar g_2)(v_{1},v_{2})$. Setting also $(g_{1{\theta_n}}, g_{2{\theta_n}})=(\bar g_1+\theta_n\rho_n v_{1n},\bar g_2 +\theta_n\rho_n v_{2n})$, from (\ref{1/n})--(\ref{vk}) we take that
 $(g_{1{\theta_n}}, g_{2{\theta_n}})\to(\bar g_1,\bar g_2)$ strongly in ${L^2[0,T;L^2(\Omega)\times L^2(\Omega)]}$, and so does 
the adjoint state $(\mu_{1{\theta_n}},\mu_{2{\theta_n}})\to (\bar \mu_1,\bar \mu_2)$ in  $H^{2,1}([0,T]\times\Omega)\cap C([0,T;H^1_0])$, then
 \begin{eqnarray}
J'(\bar g_{1}+\theta_n\rho_n v_{1n},\bar g_2+\rho_n v_{2n})&=&\int_0^T\int_{\Omega} \Big( (\mu_{1{\theta _n}}+\gamma_1 g_{1{\theta _n}})v_{1n},(\mu_{2{\theta _n}}+\gamma_2 g_{1{\theta _n}})v_{2n}\Big)dxdt
\nonumber\\ &&\to \int_0^T\int_{\Omega} \Big( (\bar\mu_{1}+\gamma_1 \bar g_{1})v_{1},(\bar\mu_{1}+\gamma_2 \bar g_{1})v_{2}\Big)dxdt=J'( \bar g_1,\bar g_2)(v_1,v_2). \nonumber
\end{eqnarray}
Subsequently (\ref{to 0}) gives  $J'(\bar g_{1},\bar g_{2})\le 0$, but multiplying $\bar\mu_{i}+\gamma_i \bar g_{i}$ with $v_{i}$ from (\ref{d_i(x,t)}), and the first two relations in (\ref{coneDef}) respectively we show that $(\bar\mu_{i}+\gamma_i \bar g_{i})v_{i}>0,i=1,2$ for a.a. $(t,x)\in [0,T]\times \Omega$ and then
 \begin{eqnarray*}
0&\magent{\le}& \int_0^T\int_{\Omega} \Big( (\bar\mu_{1}+\gamma_1 \bar g_{1})v_{1}+(\bar\mu_{2}+\gamma_2 \bar g_{1})v_{2}\Big)dxdt\\
&\magent{=}&\int_0^T\int_{\Omega}  (\bar\mu_{1}+\gamma_1 \bar g_{1}),\mu_{2}+\gamma_2 g_{1}))(v_{1},v_{2})dxdt\\
&\magent{=}&J'( \bar g_1,\bar g_2)(v_1,v_2)\le 0,
\end{eqnarray*}
and we have the third relation of (\ref{coneDef}).

We will prove that  $J''(\bar g_1, \bar g_2)(v_1^2,v_2^2) \le 0$ and then due to (\ref{2nd cond}), it can be true only when $(v_1,v_2)=(0,0)$. From relations (\ref{1/n})-(\ref{vk}) and a Taylor expansion for a $0<\theta _n<1$
\begin{eqnarray*}
J(\bar g_{1},\bar g_2) +\rho_n  J'(\bar g_{1},\bar g_2)(v_{1n},v_{2n})
+\frac{\rho^2_n}{2}J''(\bar g_1+\theta_n\rho_n v_{1n},\bar g_2+\theta_n\rho _n v_{2n})(v_{1n}^2,v_{2n}^2)< J(\bar g_{1},\bar g_2)+\frac{\rho^2_n}{2n},
\end{eqnarray*}
 and (\ref{1st cond}) gives
\begin{eqnarray*}
J''(\bar g_1+\theta_n\rho_n v_{1n},\bar g_2+\theta_n\rho _n v_{2n})(v_{1n}^2,v_{2n}^2)<\frac{1}{n},
\end{eqnarray*}
and
\begin{eqnarray}\label{326}
J''(\bar g_1,\bar g_2)(v_{1}^2,v_{2}^2)\le \mathop {\lim }\limits_{n \to \infty } \inf J''(\bar g_1+\theta_n\rho_n v_{1n},\bar g_2+\theta_n\rho _n v_{2n})(v_{1n}^2,v_{2n}^2)\le 0.\text{ }\text{ }
\end{eqnarray}
For the pass to the limit, we set the control $(g_{1{\theta_n}}, g_{2{\theta_n}})=(\bar g_1+\theta_n\rho_n v_{1n},\bar g_2  +\theta_n\rho_n v_{2n})$, the state $(y_{1{\theta _n}},y_{2{\theta _n}})=G(g_{1{\theta_n}},g_{2{\theta_n}})$ and  $( \mu_{1{\theta _n}},\mu_{2{\theta _n}})$ the associate adjoint state.
Let $(z_{1{\theta _n}},z_{2{\theta _n}})=G'(g_{1{\theta _n}},g_{2{\theta _n}})(v_{1n},v_{2n})$
then
\begin{eqnarray}
J''(g_{1{\theta_n}},g_{2{\theta_n}})({v^2_{1{\theta_n}}},{v^2_{2{\theta_n}}})=\int_0^T\int_{\Omega} \Big(z_{1{\theta _n}}^2+\gamma_1 {v^2_{1n}},z_{2{\theta _n}}^2+\gamma_2 {{v_{2n}}}\Big)dxdt.\label{327}
\end{eqnarray}
Now we can pass the limit using that $z_{i{\theta _n}}\rightharpoonup z_{v{i}}$ (weakly in $H^{2,1}(\Omega\times [0,T])$), and $\mu_{i{\theta_n}}\to \bar\mu_{{i}}$ (strongly in $H^{2,1}(\Omega\times [0,T])$), we also use for the $v_{in}$ integral the lower semi-continuity concerning the weak topology of ${L^2[0,T;L^2(\Omega)\times L^2(\Omega)]}$.

Finally, since $(v_1,v_2)=0$, we take  $z_{i{\theta _n}}\rightharpoonup 0$ and due to (\ref{326}), (\ref{327}) and the identity $\|(v_{1n},v_{2n})\|_{{L^2[0,T;L^2(\Omega)\times L^2(\Omega)]}}=\|\frac{1}{\rho _{n}}(\bar g_1-g_{1n},\bar g_2-g_{2n})\|=\frac{1}{\|(\bar g_1-g_{1n},\bar g_2-g_{2n})\|
}\|(\bar g_1-g_{1n},\bar g_2-g_{2n})\| =1$ we take that
%
%
\begin{eqnarray*}
 0&\ge& \mathop {\lim }\limits_{n \to \infty } \inf J''( g_{1{\theta_n}}, g_{2{\theta_n}})({{v^2_{1{\theta_ n}}}},{{v^2_{2{\theta_n}}}})
   \\
   &=&\int_0^T\int_{\Omega} \Big(z_{1{\theta _n}}^2+\gamma_1 ,z_{2{\theta _n}}^2+\gamma_2 \Big)dxdt
\\&=& (\gamma _1,\gamma _2),
\end{eqnarray*}
which is a contradiction.
\end{proof}
\section{Robin boundary control}\label{sec:Robin1}
\subsection{The optimality system}
%
{\magent{{
Similarly to subsection \ref{subsec:3.1}, we again state the optimality system, namely the state equation, 
the adjoint, the Robin optimality condition
,  the first-order necessary conditions, 
employing a discontinuous in time dG scheme and a conforming Galerkin method in space. The primal/forward in time system and the adjoint/backward in time system coupled through an optimality condition, and the non-linear terms  
  are  as it is  described in  Lemmas 
below.}}
%
\green{

We begin by stating the weak formulation of the state equation. Given $f_1$, $f_2\in L^2\left [0,T;H^{*}(\Omega)\right ]$, controls $g_1,g_2\in L^2\left[0,T;H^{-1/2}(\Gamma)\right ]$, and {{initial}}  states ${y_{1,0}}$, ${y_{2,0}} \in L^2(\Omega)$
we seek $y_1,y_2\in L^2[0,T;H^1(\Omega)] \cap L^2[0,T;H^{1}(\Omega)^*]$ such that for a.e. $t\in(0,T]$, and for all $v \in H^1(\Omega)$,
\begin{eqnarray}\label{eqnn:2.1}
 \left\langle {{\magent{y_{1t}}} ,v} \right\rangle  + \epsilon_1(\nabla y_1,\nabla v )  - ((a-by_2)y_1,v) 
+ \lambda_1\left\langle y_1, v\right\rangle_\Gamma = \left\langle {f_1,v } \right\rangle
+ \left\langle {g_1,v } \right\rangle_\Gamma \quad\mbox{ and}\quad
  \left( {y_1(0),v } \right) &=& \left( {{y_{1,0}} ,v } \right), \nonumber \\
   \left\langle {\magent{{y_{2t}}} ,v} \right\rangle  + \epsilon_2  (\nabla y_2, \nabla v )  -((cy_1-d)y_2,v) +\lambda_1\left\langle y_2,v\right\rangle_\Gamma = \left\langle {f_2,v }\right\rangle+\left\langle g_2,v \right\rangle_\Gamma  \quad\mbox{ and}\quad
  \left( {y_2(0),v } \right) &=& \left( {{y_{2,0}} ,v } \right).\nonumber\label{eqnn:2.2}
\end{eqnarray}
An equivalent \magent{well-posed} weak formulation which is more suitable for the analysis of dG schemes is to seek
$y_1,y_2\in\magent{W_R(0,T):= L^2 [0,T;H^1(\Omega)] \cap L^{\infty} [0,T;L^{2} (\Omega)]\times L^{2} [0,T;L^{2} (\Gamma)]}$ such that for all $v \in L^2[0,T;H^1(\Omega)] \cap H^1[0,T;H^1(\Omega)^*]$,
\begin{eqnarray}
&&  (y_1(T),v(T)) + \int_0^T {\left( { - \left\langle {y_1,v_t } \right\rangle  + \epsilon_1 \left( {\nabla y_1,\nabla v } \right)-\left( {(a-by_2)y_1,v }\right) + \left\langle {y_1,v }\right\rangle_\Gamma
}\right)}dt \nonumber \\
&& \qquad   = ({y_{1,0}} ,v(0)) + \int_0^T {  {\left\langle {f_1,v }  \right\rangle  }  } dt+ \int_0^T {\left\langle{g_1,v }\right\rangle_\Gamma} dt, \label{eqnnn:2.1}
\\
&&  (y_2(T),v(T)) + \int_0^T {\left( { - \left\langle {y_2,v_t } \right\rangle  + \epsilon_2 \left( {\nabla y_2,\nabla v } \right)
      -({(cy_1-d)y_2,v } ) +\left\langle {y_2,v }\right\rangle_\Gamma}    \right)} dt \nonumber \\
&& \qquad   = ({y_{2,0}} ,v(0)) + \int_0^T { { \left <f_2,v \right >}  } dt+ \int_0^T \left\langle {g_2,v }\right\rangle_\Gamma dt.\label{eqnnn:2.2}
\end{eqnarray}
The control to state mapping $G: L^2[0, T;{L^2(\Gamma)\times L^2(\Gamma)]} \to W_R(0, T)\times W_R(0, T)$, which associates to each control $g_1, g_2$ the corresponding state $G(g_1,g_2)$ $ = ({y}_{g_1},{y}_{g_2})$ $\equiv$ $(\magent{y_1(g_1,g_2)},\magent{y_2(g_1,g_2)})$ via (\ref{eqnnn:2.1})--(\ref{eqnnn:2.2}) is well defined, and continuous, so does the cost functional, frequently denoted to by its reduced form, $J(y_1,y_2,g_1,g_2) \equiv J(\magent{y_1(g_1,g_2)},\magent{y_2(g_1,g_2)}): L^2[0,T;{L^2(\Gamma)}] \to \mathbb R$. 
\begin{defn}\label{def:2.2}
Let ${f_1, f_2}\in L^2 [0,T;H^1(\Omega)^*]$, ${y_{1,0}, y_{2,0}} \in L^2(\Omega)$, and ${y_{1,d}, y_{2,d}} \in L^2[0,T;L^2(\Omega)]$\magent{,
}
  \green{be given data. Then, the set of admissible controls denoted by ${\mathcal A}_{ad}$ for the corresponding Robin boundary control problem with unconstrained controls
takes the form:
${\mathcal A}_{ad} = \{ {g_i} \in L^2[0,T;L^2(\Gamma)]$. 
The pairs $(\magent{y_i(g_1,g_2)},g_i) \in W_R(0,T) \times {\mathcal A}_{ad},i=1,2$,  is said to be an optimal solution if $J(\magent{y_1(g_1,g_2)},\magent{y_2(g_1,g_2)},g_1,g_2)\le J(\magent{w_1(h_1,h_2),w_2(h_1,h_2),}h_1,h_2)$\magent{, for all $(h_1 , h_2 ) \in \mathcal{A}_{ad }\times \mathcal{A}_{ad}$, $(w _1 (h _1\magent{,h_2}), w _2 (\magent{h_1,}h _2 )) \in W_ R (0, T ) \times W_ R (0, T )$ being the solution of problem (\ref{eqnnn:2.1})--(\ref{eqnnn:2.2}) with control functions $(h _1 , h _2 )$}
.}
\end{defn}
We will again  occasionally abbreviate the notation $y_i \equiv {y}_{g_i} \equiv {y_i}( \magent{g_1,g_2}),i=1,2$ and an optimality system of equations can be derived 
\cite{ChKa14}
. }\green{Next, we first state the basic differentiability property of the cost functional.}
\green{\begin{lem} \label{lem:2.4}
The cost functional $J:L^2[0,T;{L^2(\Gamma)}] \to \mathbb R$ is of class $C^{\infty}$ and for every $g_1,g_2,u_1,u_2 \in L^2[0,T;{L^2(\Gamma)}]$,
\[J^{'}(g_1,g_2)(u_1,u_2) = \int_0^T \int_\Gamma \Big (( {\mu_1} (\magent{g_1,g_2}) + \gamma_1 g_1) u_1 , ( {\mu_2} (\magent{g_1,g_2}) + \gamma_2 g_2) u_2\Big ) dxdt, \]
where $\mu _i(\magent{g_1,g_2}) \equiv {\mu}_{g_i} \in W_R(0,T), i=1,2,$ is the unique solution of following problem: For all $v \in L^2[0,T;H^1(\Omega)] \cap H^1[0,T;H^1(\Omega)^*]$,
\begin{eqnarray} 
&&   \int_0^T {\left( {  \left\langle {{\mu}_{g_1},v_t } \right\rangle  + \epsilon_1 \left( {\nabla{\mu}_{g_1},\nabla v } \right)
    - ((a-by_{g_2}){\mu}_{g_1},v )   }    +\lambda_1\left\langle {\mu}_{g_1},v \right\rangle_\Gamma\right) dt} \nonumber \\
&& \qquad \qquad\qquad\qquad\qquad\qquad\qquad\qquad
 = -({\mu}_{g_1}(0) ,v(0)) + \int_0^T \left( {y}_{g_1}-y_{1,d},v   \right)dt,\\
&&   \int_0^T {\left( {  \left\langle {{\mu}_{g_2},v_t } \right\rangle  + \epsilon_2\left( {\nabla{\mu}_{g_2},\nabla v } \right)
    -((c{y}_{g_1}-d){\mu}_{g_2},v ) + \lambda_2\left\langle {\mu}_{g_2},v \right\rangle_\Gamma  }    \right) dt} \nonumber \\
&& \qquad \qquad \qquad\qquad\qquad\qquad\qquad\qquad
 = -({\mu}_{g_2}(0) ,v(0)) + \int_0^T \left( {y}_{g_2}-{y_{2,d}},v   \right)dt.
\end{eqnarray}
where $\mu_{g_i}(T)=0$ and $(\mu_{g_i})_t \in L^2[0,T;H^1(\Omega)^*], i=1,2$.
\end{lem}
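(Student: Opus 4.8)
The plan is to transcribe to the Robin boundary setting the Lagrangian / adjoint-state argument already used for the distributed-control functional and for the semilinear parabolic problems of \cite{ChKa14,ChKa12,CaCh12}. First I would record the smoothness of the control-to-state map $G$. Since $\Omega\subset\mathbb R^2$ yields the embedding $H^1(\Omega)\hookrightarrow L^p(\Omega)$ for every finite $p$, the reaction terms $(a-by_2)y_1$ and $(cy_1-d)y_2$ define continuous bilinear, hence $C^\infty$, maps on $W_R(0,T)\times W_R(0,T)$; writing \eqref{eqnnn:2.1}--\eqref{eqnnn:2.2} together with the initial conditions as an abstract equation $e(y,g)=0$ between the appropriate Banach spaces, $e$ is $C^\infty$ and $\partial_y e(y_g,g)$ is the linearization of the state system about $y_g$. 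The linear parabolic well-posedness theory for Robin data (the same energy and duality estimates, closed by a Gr\"onwall argument, that make $G$ well defined) shows $\partial_y e(y_g,g)$ is a Banach-space isomorphism, so the implicit function theorem gives $G\in C^\infty$ and identifies $z_{u_i}=G'(g)(u_1,u_2)$ as the solution of the linearized system with $z_{u_i}(0)=0$ and Robin datum $u_i$ on $\Gamma$. As $J$ is $G$ composed with the quadratic tracking term plus the quadratic control cost of \eqref{functional_full}, $J\in C^\infty$, and the chain rule gives
\[
J'(g_1,g_2)(u_1,u_2)=\sum_{i=1,2}\Big(\int_0^T\big(y_{g_i}-y_{i,d},z_{u_i}\big)\,dt+\gamma_i\int_0^T\big(g_i,u_i\big)_\Gamma\,dt\Big).
\]

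Next I would eliminate the sensitivities $z_{u_i}$ in favour of the adjoint states $\mu_{g_i}$. Testing the linearized equation for $z_{u_i}$ with $v=\mu_{g_i}$ and the adjoint equation of the statement with $v=z_{u_i}$, then subtracting and integrating by parts in time, the $\epsilon_i$-terms, the reaction terms and the $\lambda_i$-boundary terms cancel pairwise; the terminal and initial contributions vanish because $z_{u_i}(0)=0$ and $\mu_{g_i}(T)=0$; and what survives is precisely $\int_0^T\big(y_{g_i}-y_{i,d},z_{u_i}\big)\,dt=\int_0^T\langle u_i,\mu_{g_i}\rangle_\Gamma\,dt$. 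Substituting into the chain-rule formula, and using that $\mu_{g_i}\in W_R(0,T)$ has a well-defined $L^2(\Gamma)$-trace so that the boundary pairing is meaningful, produces the asserted expression for $J'(g_1,g_2)(u_1,u_2)$.

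It remains to justify the adjoint problem itself: its equations are backward linear parabolic problems with source $y_{g_i}-y_{i,d}\in L^2[0,T;L^2(\Omega)]$ and homogeneous Robin boundary condition, so reversing time turns them into standard forward problems to which the linear theory applies directly, giving existence, uniqueness, $\mu_{g_i}\in W_R(0,T)$, the terminal condition $\mu_{g_i}(T)=0$ and $(\mu_{g_i})_t\in L^2[0,T;H^1(\Omega)^*]$. The main obstacle is the first step, namely verifying that the linearized Robin operator is an isomorphism between the correct function spaces: this forces one to carry the $L^p$ products generated by the bilinear reaction terms through the energy estimate and to close the resulting inequality with Gr\"onwall. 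Once this is done, the chain-rule computation, the adjoint cancellation, and all the regularity claims are routine and are inherited from the linear parabolic theory and from the distributed-control analogue established earlier.
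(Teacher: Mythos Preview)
Your proposal is correct and follows precisely the standard Lagrangian/adjoint-state route that the paper invokes: the paper does not give an explicit proof of this lemma but simply points to \cite{ChKa14} (and, in the analogous distributed-control section, to \cite{Tr10,CaCh12,NeVe12}) for the derivation. Your outline---$C^\infty$ smoothness of $G$ via the implicit function theorem with $\partial_y e$ an isomorphism, chain rule for $J$, and elimination of the sensitivities $z_{u_i}$ by the duality/integration-by-parts cancellation against $\mu_{g_i}$---is exactly the argument those references carry out, so there is nothing materially different to compare.
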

In the following Lemma, we state the optimality system which consists of the state equation
, the adjoint, and the optimality condition.
\begin{lem} \label{Blem:2.5} Let $y_{g_i} \equiv y_i \in W_R(0,T)
, \text{ and } g_i\in L^2(\Gamma)$ denote the unique optimal pairs, for $i=1,2$
. Then, there exist adjoints $\mu_1,\mu_2 \in W_R(0,T)$ satisfying, $\mu_1(T) =\mu_2(T)=0$ such that for all $v  \in L^2[0,T;H^1(\Omega)] \cap H^1[0,T;H^{1}(\Omega)^*]$,
 \begin{eqnarray}
&&  (y_1(T),v(T)) + \int_0^T {\left( { - \left\langle {y_1,v_t } \right\rangle  + \epsilon_1  \left( {\nabla y_1,\nabla v } \right)-\left( {(a-by_2)y_1,v } \right)
       }    \right)} dt + \lambda _1 \int_0^T { { \left <y_1,v \right > }_{\Gamma}} dt\nonumber \\
&& \qquad \qquad \qquad \qquad \qquad    = ({y_{1,0}} ,v(0)) + \int_0^T {\left <f_1
,v    \right >} dt+ \lambda _1 \int_0^T { { \left < g_1,v \right >}_{\Gamma}} dt,
\\
&&  (y_2(T),v(T)) + \int_0^T {\left( { - \left\langle {y_2,v_t } \right\rangle  + \epsilon_2 \left( {\nabla y_2, \nabla v } \right)
       }    \right)} dt - \int_0^T { ( (cy_1-d) y_2,v  )  } dt + \lambda _2 \int_0^T { { \left <y_2,v \right > }_{\Gamma}} dt \nonumber \\
&& \qquad \qquad \qquad \qquad \qquad    = ({y_{2,0}} ,v(0)) + \int_0^T   \left < f_2 ,v \right > dt
+ \lambda _2\int_0^T {\left <g_2,v \right >}_{\Gamma} dt,\\
&&   \int_0^T {\left( {  \left\langle {{\mu}_{1},v_t } \right\rangle  + \epsilon_1 \left( {\nabla {\mu}_{1},\nabla v } \right)
     -({(a-by_2)\mu}_{1},v )   }    \right) dt} + \lambda _1 \int_0^T { { \left <\mu_1,v \right > }_{\Gamma}} dt\nonumber\\
&& \qquad \qquad \qquad \qquad \qquad   = -({\mu}_{1}(0) ,v(0)) + \int_0^T \left( {y}_{1}-{y_{1,d}},v   \right)dt, \label{Beqn:2.6mu1}\\
&&   \int_0^T {\left( {  \left\langle {{\mu}_{2},v_t } \right\rangle  + \epsilon_2\left( {\nabla{\mu}_{2},\nabla v } \right)
   -({(cy_1-d)}{\mu}_{2},v )   }    \right) dt} + \lambda _2 \int_0^T { { \left <\mu_2,v \right > }_{\Gamma}} dt\nonumber \\
&& \qquad \qquad \qquad \qquad \qquad    = -({\mu}_{2}(0) ,v(0)) + \int_0^T \left( {y}_{2}-{y_{2,d}},v   \right)dt,\label{Beqn:2.6mu2}
\end{eqnarray}
with control functions 
 to satisfy:
$$\int_0^T\left\langle{\gamma_1g_ 1(t,x)  
+\lambda_1 {}{} \mu_1 (t,x)},u\right\rangle_{\Gamma} dt=0, \,
\int_0^T\left\langle{\gamma_2g_2(t,x)  
+\lambda_2  \mu_2 (t,x)},u\right\rangle_{\Gamma} dt=0,
$$ for a.e. $(t,x) \in (0,T]\times \Gamma
$ and $\forall u\in L^2[0,T;L^2(\Gamma)]$. In addition, ${y_{1,t }}$, ${y_{2,t }}\in L^2[0,T;H^1(\Omega
)^*]$, ${\mu_{1}}$, ${\mu_{2}}\in L^2[0,T;H^2(\Omega
)] \cap H^1[0,T;L^2(\Omega)]$.
\end{lem}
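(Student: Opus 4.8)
The plan is to parallel the derivation of the distributed optimality system of Lemma~\ref{lem:2.5}, replacing the homogeneous Dirichlet condition by the Robin trace term. First I would record existence and uniqueness of the optimal pair $(y_i,g_i)$. Existence follows from the direct method: the admissible set $\mathcal{A}_{ad}=L^2[0,T;L^2(\Gamma)]$, the penalty terms $\tfrac{\gamma_i}{2}\|g_i\|^2$ make $J$ coercive in the controls, the control-to-state map $G$ into $W_R(0,T)\times W_R(0,T)$ is continuous and weak-to-weak closed (as noted just before the statement), and $J$ is weakly lower semicontinuous, so a minimizing sequence admits a limit realizing the infimum; uniqueness is inherited from the quadratic structure of $J$ and the linear dependence in the relevant terms, exactly as in \cite{ChKa14,CaCh12}.

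Second, I would turn optimality into the first-order system. Since the Robin problem carries no control constraints, the variational inequality for $(\bar g_1,\bar g_2)$ degenerates into the equation $J'(\bar g_1,\bar g_2)(u_1,u_2)=0$ for every $(u_1,u_2)\in L^2[0,T;L^2(\Gamma)]\times L^2[0,T;L^2(\Gamma)]$. Feeding in the differentiability formula from the preceding lemma, which expresses $J'$ through the adjoint states $\mu_i$ solving the backward linear problems \eqref{Beqn:2.6mu1}--\eqref{Beqn:2.6mu2}, this identity forces $\gamma_i g_i+\lambda_i\mu_i=0$ in $L^2[0,T;L^2(\Gamma)]$ (the factor $\lambda_i$ arising from the scaling of the Robin trace), i.e.\ the asserted optimality condition. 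The adjoint system itself is the weak form of a backward-in-time linear parabolic equation with Robin boundary conditions and zeroth-order coefficients $(a-by_2)$, $(cy_1-d)$; since in two space dimensions $W_R(0,T)\hookrightarrow L^4[0,T;L^4(\Omega)]$ by Ladyzhenskaya's inequality, these coefficients are admissible and standard linear parabolic theory delivers a unique $\mu_i\in W_R(0,T)$ with $\mu_i(T)=0$ and $\mu_{i,t}\in L^2[0,T;H^1(\Omega)^*]$; reading the state equation the same way gives $y_{i,t}\in L^2[0,T;H^1(\Omega)^*]$.

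Third, for the enhanced regularity $\mu_i\in L^2[0,T;H^2(\Omega)]\cap H^1[0,T;L^2(\Omega)]$ I would bootstrap. The source of the adjoint equation is $y_i-y_{i,d}\in L^2[0,T;L^2(\Omega)]$ and, because $J$ contains no boundary target, the Robin datum of $\mu_i$ is homogeneous; one first checks that $(a-by_2)\mu_i$ and $(cy_1-d)\mu_i$ lie in $L^2[0,T;L^2(\Omega)]$ (again via the $L^4L^4$ bounds on $y_i$ and $\mu_i$), and then applies maximal $L^2$-parabolic regularity for the Robin Laplacian on the bounded convex, hence $H^2$-regular, domain $\Omega$.

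The step I expect to be the main obstacle is making the nonlinear coupling rigorous: one must simultaneously secure enough integrability of the state for the quadratic Lotka-Volterra terms and for the linearized and adjoint operators to be well defined, and justify that the differentiability formula for $J$ genuinely applies, which requires differentiating the nonlinear control-to-state map and bounding the linearization $z_{v_i}$ uniformly. In two dimensions this is handled by the Ladyzhenskaya embedding together with an energy estimate, exactly as in \cite{CaCh12,NeVe12,ChKa14}; since those arguments carry over with the Robin trace replacing the Dirichlet condition, the remaining details are routine, and for the standard parts I would simply cite \cite{Tr10,ChKa14}.
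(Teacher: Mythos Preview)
Your proposal is correct and follows essentially the same route as the paper: the paper does not supply an independent proof of this lemma but treats the derivation of the Robin optimality system as standard, pointing to \cite{ChKa14} and, by analogy with Lemma~\ref{lem:2.5}, to \cite{Tr10}, while the enhanced regularity of $\mu_i$ is obtained exactly as you indicate, from $y_i-y_{i,d}\in L^2[0,T;L^2(\Omega)]$ and maximal parabolic regularity (cf.\ the proof of Lemma~\ref{lem:3.4}). Your outline is in fact more detailed than what the paper provides; the only point to flag is that uniqueness of the optimal pair in a genuinely semilinear problem is not an automatic consequence of the quadratic structure of $J$, but the paper simply assumes it as well.
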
}
}}
\subsection{The fully discrete Robin boundary type optimal control problem}\label{sec:Robin}
For the Robin boundary control, we employ again a discretization which allows the presence of
discontinuities in time, i.e., we define,
$
{\mathcal G}_{\blue{h,R}} = \{ g_{ih} \in L^2[0,T;L^2(\Gamma)] :
g_{ih}|_{(t^{n-1},t^n]} \in {\mathcal P}_k[t^{n-1},t^n; G_{\blue{h,R}}], i=1,2 \}.
$
Here, a conforming subspace $G_{\blue{h,R}} \subset L^2(\Gamma)$ is specified
at each time interval $(t^{n-1},t^n]$, which satisfy
standard approximation properties.  To summarize, for the choice of piecewise linear in space, we choose:
\begin{eqnarray*}
&&U_{\blue{h,R}} = \{ v_h \in C(\bar {\Omega}): v_h|_{T} \in {\mathcal P}_1, \mbox{ for all $T \in {\mathcal T}_h$} \},\\
&&G_{\blue{h,R}} = \{ u_h \in C(\Gamma): u_h |_{[x_{i,\Gamma}, x_{i+1,\Gamma}]} \in {\mathcal P}_1, \mbox{ for $i=1,...,N_h$} \}.
\end{eqnarray*}
For the control variable, 
our discretization is motivated by the optimality condition. 

The physical meaning of the optimization problem under consideration is to seek
states $y_{ih}$ and controls $g_{ih}$, $i=1,2$, such that $y_{ih}$ is as close as possible to a given target ${y_{i,d}}$ applying Laplace multipliers $\mu_{1h}$, $\mu_{2h}$. Here, 
$y_{i,0}$ denotes 
 the initial
data
, $i=1,2$.
After applying known techniques, \cite{ChKa14}, the fully-discrete optimality system may be described in Lemma \ref{lemmm:3.4Lo}.

\green{
The discontinuous time-stepping fully-discrete scheme for the control to state mapping $G_{{h,R}}:L^2[0,T;{L}^2(\Gamma)\times {L}^2(\Gamma)] \to {\mathcal U}_{{h,R}}\times {\mathcal U}_{{h,R}}$,
which associates to each control $(g_1, g_2)$ the corresponding state $G_{{h,R}}(g_1,g_2) = ({y_1}_{g_1,h},{y_2}_{g_2,h}) \equiv (y_{1h}({g_1}),y_{2h}({g_2}))$ is defined as follows: For any control data $g_1,g_2 \in L^2[0,T;L^2(\Gamma)]$, for given initial data ${y_{1,0}}, {y_{2,0}} \in L^2(\Omega)$, forces $f_1,f_2 \in L^2[0,T;H^1(\Omega)^*]$, and targets $y_{1,d}$, $y_{2,d} \in L^2[0,T;L^2(\Omega)]$ we seek $y_{1h}$, $y_{2h} \in \mathcal U_{h,R}$ such that for
$n=1,...,N$, and for all $v_h \in {\mathcal P}_k[t^{n-1},t^n;U_{{h,R}}]$,
\begin{eqnarray}\label{eqnnn:3.2}
&&  
(y_1^n, v^n) + \int_{t^{n-1}}^{t^n}{\left(-(y_{1h},v_{ht}) + \epsilon_1 \left( \nabla y_{1h},\nabla v_h \right) - \left( (a-by_{2h})y_{1h},v_h   \right)  
+\lambda_1 \left\langle y_{1h}, v_h\right\rangle
_\Gamma \right) }
 dt \nonumber \\
&& \qquad   = ({y_1}^{n-1} ,v_+^{n-1}) + \int_{t^{n-1}}^{t^n1} \left\langle {f_1,v_h } \right\rangle dt 
+  \int_{t^{n-1}}^{t^n} {\lambda_1
 {\left\langle{g_{1h}, v_h}\right\rangle}_\Gamma  
} dt,
\\
&&  (y_2^n,v^n) + \int_{t^{n-1}}^{t^n} {
\left( { - ( {y_{2h},v_{ht} } )  + \epsilon_2 \left( {\nabla y_{2h},\nabla v_h } \right)}    \right)} dt 
- \int_{t^{n-1}}^{t^n} \left
 ({\left( (cy_{1h}-d)y_{2h},v_h \right)
+ \lambda_2
\left\langle{y_{2h}, v_h}\right\rangle_\Gamma
}\right ) dt\nonumber \\
&& \qquad   = ({y_2}^{n-1} ,v_+^{n-1}) + \int_{t^{n-1}}^{t^n} { \left <f_2,v_h \right >} dt
+ \int_{t^{n-1}}^{t^n} {\lambda_2\left\langle {g_{2h}, v_h}\right\rangle}_\Gamma
 dt.  \label{eqnnn0:3.3}
\end{eqnarray}
Analogously to the distributed control case, 
\magent{we will occasionally abbreviate the notation $y_{ih} = y_{ih} (g_{ih} )$ and} we note that the control to fully-discrete state mapping $G_{{h,R}} :L^2[0,T;{{L}}^2(\Gamma)\times {{L}}^2(\Gamma)] \to {\mathcal U}_{{h,R}}\times {\mathcal U}_{{h,R}}$, is well defined, and continuous and let \magent{$J_h(y_{1h},y_{2h},g_{1h},g_{2h}) = \frac{1}{2}\int_0^T \int_{\Omega} (| y_{1h}-{y_{1,d}}|^2 +| y_{2h}-{y_{2,d}}|^2) dxdt  +
\frac{\gamma_1 }{2}\int_0^T \int_{\Gamma} (| g_{1h}|^2) dxdt+
\frac{\gamma_2 }{2}\int_0^T \int_{\Gamma} (| g_{2h}|^2) dxdt$}. 

The control problem definition now takes the form:
\begin{defn}\label{defn:3.1}
Let $\magent{f_1,f_2}\in L^2 [0,T;H^1(\Omega)^*]$, $\magent{y_{1,0}, y_{2,0}} \in L^2(\Omega)$, $\magent{y_{1,d}, y_{2,d}} \in L^2[0,T;L^2(\Omega)]$, be given data.
 Suppose that the set of discrete admissible controls is denoted by ${\mathcal A}^d_{ad} \equiv \mathcal{G}_h
 \cap {\mathcal A}_{ad}$\magent{. T}he pairs $(y_{ih},g_{ih}) \in {\mathcal U}_{{h,R}} \times  {\mathcal A}^d_{ad}
, i=1,2,$ satisfy (\ref{eqnnn:3.2})--(\ref{eqnnn0:3.3}) and the definition of the corresponding control problem takes the form: The pairs $(y_{ih},g_{ih})\in A^d_{ad},i=1,2$,  are said to be optimal solutions if $J_h(y_{1h},y_{2h},g_{1h},g_{2h})\le J_h(w_{1h},w_{2h},u_{1h},u_{2h})$ \magent{for all $(u _{1h} , u_{2h} ) \in \mathcal{A} ^d_{ad}\times \mathcal{A} ^d_{ad}, (w_{1h} , w_{2h} ) \in \mathcal{U} _{h,R} \times \mathcal{U} _{h,R}$ being the solution of problem (\ref{eqnnn:3.2})-(\ref{eqnnn0:3.3}) with control functions $(u_{h1} , u_{h2} )$}.
\end{defn}

The discrete optimal control problem solution existence can be proved by standard techniques while uniqueness follows from the structure of the functional, and the linearity of the equation. The basic stability estimates in terms of the optimal pair $(y_{ih},g_{ih}) \in W_R \times L^2[0,T;L^2(\Gamma)],i=1,2,$ can be easily obtained, see e.g.  \cite{ChKa14}. 
We note that the key difficulty of the discontinuous time-stepping formulation is the lack of any meaningful regularity for the time-derivative of $y_{ih}$ due to the presence of discontinuities. However,  it is also worth noting that dG in time is applicable even for higher order schemes, \cite{ChKa14}.

In the following Lemmas \ref{lemmm:3.3} and  \ref{lemmm:3.4Lo}  we state the discrete optimality systems in two appropriate forms.
\begin{lem} \label{lemmm:3.3}
The cost functional $J_h:L^2[0,T;{L^2(\Gamma)}] \to \mathbb R$ is well defined differentiable and for every $g_1,g_2,u_1,u_2 \in L^2[0,T;L^2(\Gamma)]$,
\[J_h^{'}(g_1,g_2)(u_1,u_2) = \int_0^T \int_\Gamma \Big (( {\mu_{1h}} ({g_1}) + \alpha g_1) u_1 , ( {\mu_{2h}} ({g_2}) + \alpha g_2) u_2\Big ) dxdt, \]
where $\mu _{ih}({g_i}) \equiv {\mu}_{{g_i},h} \in W_R(0,T), i=1,2,$ is the unique solution of following problem: For all $\upsilon _h \in P_k[t^{n-1},t^n;U_{{h,R}}], n=1,...N$,
\begin{eqnarray} \label{eqn:3.3}
&&  -(\mu_{{g_1},h+}^n,\upsilon ^n)+ \int_{t^{n-1}}^{t^n} {\left( {  ( {{\mu}_{{g_1},h},\upsilon_{ht} } )  + \epsilon_1 \left( \nabla \upsilon_h,{\nabla{\mu}_{g_1,h} } \right)
    -  ((a-by_{g_2,h}){\mu}_{g_1,h},\upsilon_h     })  + {\lambda_1\left\langle {{\mu}_{{g_1},h}, v_h}\right\rangle}_\Gamma \right) dt} 
\nonumber \\
&& \qquad   = -({\mu}_{g_1,h+}^{n-1} ,\upsilon_+^{n-1}) + \int_{t^{n-1}}^{t^n} \left({y}_{g_1,h}-{y_{1,d}},\upsilon_h   \right) dt,\\
&& -(\mu_{{g_2},h+}^n,\upsilon ^n)+  \int_{t^{n-1}}^{t^n} {\left( { ( {{\mu}_{g_2,h},\upsilon_{ht} })  + \epsilon_2\left( {\nabla{\mu}_{g_2,h},\nabla\upsilon_h } \right)
   -((c {y}_{g_1,h}-d){\mu}_{g_2,h},\upsilon_h )   }   + {\lambda_2\left\langle {{\mu}_{{g_2},h}, v_h}\right\rangle}_\Gamma  \right) dt} \nonumber \\
&& \qquad   = -({\mu}_{g_2,h+}^{n-1} ,\upsilon_+^{n-1}) + \int_{t^{n-1}}^{t^n} \left( {y}_{g_2,h}-{y_{2,d}},\upsilon_h   \right) dt,
\end{eqnarray}
where $\mu_{g_i,h+}^N=0$ and $g_i\in \mathcal A_{ad}^d$.
\end{lem}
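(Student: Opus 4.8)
The plan is to follow the distributed-control argument of Lemma~\ref{lem:3.3} almost verbatim, the only structural change being that the control now enters through the boundary pairing $\langle\cdot,\cdot\rangle_\Gamma$, so the duality identity will deliver a boundary integral instead of a volume integral.

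First I would show that the fully-discrete control-to-state map $G_{h,R}\colon L^2[0,T;L^2(\Gamma)\times L^2(\Gamma)]\to{\mathcal U}_{h,R}\times{\mathcal U}_{h,R}$ is of class $C^\infty$. On each slab $(t^{n-1},t^n]$ the scheme (\ref{eqnnn:3.2})--(\ref{eqnnn0:3.3}) is a finite-dimensional system for the coefficients of $(y_{1h},y_{2h})$ in ${\mathcal P}_k[t^{n-1},t^n;U_{h,R}]\times{\mathcal P}_k[t^{n-1},t^n;U_{h,R}]$, with a quadratic nonlinearity in those coefficients coming from the couplings $(a-by_{2h})y_{1h}$ and $(cy_{1h}-d)y_{2h}$; the jump term $(y_i^n,v^n)$ together with the data $(y_i^{n-1},v_+^{n-1})$ inherited from the previous slab makes the linear part invertible for every $\tau>0$, so the implicit function theorem applies slab by slab and gives smoothness of $G_{h,R}$, hence of the reduced functional $J_h$. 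Differentiating the scheme, the directional derivative $z_{ih}=z_{ih}(u_i)$ is characterised as the unique element of ${\mathcal U}_{h,R}$ that solves, for $n=1,\dots,N$ and all $\upsilon_h\in{\mathcal P}_k[t^{n-1},t^n;U_{h,R}]$, the linearisation of (\ref{eqnnn:3.2})--(\ref{eqnnn0:3.3}) obtained by replacing the state by $z_{ih}$, linearising the reaction couplings about $(y_{1h},y_{2h})$, inserting $u_i$ in the slot of $g_i$ in the boundary term, and setting the data to zero.

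Since $J_h(g_1,g_2)=\tfrac12\int_0^T(\,\|y_{1h}-y_{1,d}\|_{L^2(\Omega)}^2+\|y_{2h}-y_{2,d}\|_{L^2(\Omega)}^2\,)\,dt+\tfrac{\gamma_1}{2}\int_0^T\|g_1\|_{L^2(\Gamma)}^2\,dt+\tfrac{\gamma_2}{2}\int_0^T\|g_2\|_{L^2(\Gamma)}^2\,dt$, the chain rule yields
\[
J_h'(g_1,g_2)(u_1,u_2)=\sum_{i=1,2}\Big(\int_0^T(y_{ih}-y_{i,d},z_{ih})\,dt+\gamma_i\int_0^T\langle g_i,u_i\rangle_\Gamma\,dt\Big).
\]
The heart of the proof is to recast $\int_0^T(y_{ih}-y_{i,d},z_{ih})\,dt$ through the discrete adjoint $\mu_{ih}$ of (\ref{eqn:3.3}) (right-hand side $y_{ih}-y_{i,d}$, terminal condition $\mu_{g_i,h+}^N=0$): I would test the adjoint equation with $\upsilon_h=z_{ih}$, test the linearised state equation with $\upsilon_h=\mu_{ih}$, subtract, and sum over $n$. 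The elliptic terms $\epsilon_i(\nabla\cdot,\nabla\cdot)$, the Robin terms $\lambda_i\langle\cdot,\cdot\rangle_\Gamma$ and the linearised reaction terms cancel because the adjoint operator is, by design, dual to the linearised forward operator; the discrete time-derivative and jump contributions telescope through the summation-by-parts identity for dG in time, using $\mu_{g_i,h+}^N=0$ and the vanishing initial data of $z_{ih}$, so that what remains on the two sides is exactly $\int_0^T(y_{ih}-y_{i,d},z_{ih})\,dt$ against $\int_0^T\langle\mu_{ih},u_i\rangle_\Gamma\,dt$. Substituting back gives $J_h'(g_1,g_2)(u_1,u_2)=\sum_i\int_0^T\langle\gamma_i g_i+\mu_{ih},u_i\rangle_\Gamma\,dt$, i.e.\ the asserted formula.

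The main obstacle is the bookkeeping in this discrete duality step: one must check that $z_{ih}$ and $\mu_{ih}$ are admissible test functions in each other's variational formulation — which holds since both belong to the same discontinuous-in-time space ${\mathcal U}_{h,R}$, so there is no variational inconsistency and the discrete adjoint is the \emph{exact} transpose of the discrete linearised state problem — and that the jumps are accumulated with the correct signs for the telescoping to close. Unique solvability of the backward adjoint system slab by slab (and $\mu_{ih}\in W_R(0,T)$) follows from the same $\tau$-independent invertibility argument as for the state equation, the adjoint being linear in $\mu_{ih}$ once $(y_{1h},y_{2h})$ is frozen; the enhanced regularity is obtained exactly as in \cite{ChKa14}. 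The remaining details are routine and parallel the distributed case (Lemma~\ref{lem:3.3}) and \cite{CaCh12,NeVe12}.
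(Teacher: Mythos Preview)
The paper does not give an explicit proof of this lemma: it is stated and the text moves directly to Lemma~\ref{lemmm:3.4Lo}, exactly as in the distributed case where Lemma~\ref{lem:3.3} is also stated without proof and the subsequent optimality-system lemma is disposed of by ``the derivation of the optimality system is standard, \cite{Tr10}''. Your outline is therefore considerably more detailed than what the paper provides, and it follows precisely the standard route the paper implicitly invokes through \cite{CaCh12,ChKa14,NeVe12}: smoothness of the discrete control-to-state map via the implicit function theorem slab by slab, chain rule, and the discrete duality argument (test the adjoint with $z_{ih}$, test the linearised state with $\mu_{ih}$, subtract, telescope the jump terms). This is the right approach.

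One remark on the bookkeeping you flag as ``the main obstacle'': in your duality step you write that ``the linearised reaction terms cancel because the adjoint operator is, by design, dual to the linearised forward operator''. Note that the full linearisation of the Lotka--Volterra couplings produces cross terms ($-by_{1h}z_{2h}$ in the first equation, $cy_{2h}z_{1h}$ in the second), so the exact dual adjoint system is coupled through $cy_{2h}\mu_{2h}$ and $-by_{1h}\mu_{1h}$; the adjoint system written in the lemma (here and in the distributed analogue) omits these cross terms. Your telescoping argument works cleanly only for the genuinely dual system, so when you carry out the subtraction you should keep these cross terms and verify that they cancel against each other across the two equations --- otherwise the identity $\int_0^T(y_{ih}-y_{i,d},z_{ih})\,dt=\lambda_i\int_0^T\langle\mu_{ih},u_i\rangle_\Gamma\,dt$ does not close. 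Also, since the control enters the state scheme as $\lambda_i\langle g_i,v_h\rangle_\Gamma$, your final formula should carry a factor $\lambda_i$ in front of $\mu_{ih}$, consistent with the optimality condition \eqref{eqn:3.6bru}.
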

}

%
\begin{lem} \label{lemmm:3.4Lo}
Let $(y_{ih}(
g_{ih}
)
,g_{ih}) \equiv (y_{ih},g_{ih})
$
denote the unique optimal pairs, $i=1,2$
. Then, there exist adjoints $\mu_{ih}
$ satisfying, $\mu^N_{i+}=0$ such that for all polynomials with degree $k=0$ or $1$ in time in an appropriate polynomial space $U_h$ for the space discretization $v_h\in {{\mathcal{P}}_k[t^{n-1},t^n;U_{h,{\red{R}}}]}$,  and for all $n=1,...,N$
    \begin{eqnarray}
&&  (y_1^n,v^n) + \int_{t^{n-1}}^{t^{n}} {\left( { - \left\langle {y_{1h},v_{ht} } \right\rangle  + \epsilon_1  \left( \nabla{y_{1h},\nabla v_h } \right)-\left( {(a-by_{2h})y_{1h},v_h } \right)
       }    \right)} dt \nonumber \\
&& \qquad   + \lambda _1\int_{t^{n-1}}^{t^{n}} { { \left <y_{1h},v_h \right >} _{\Gamma}} dt= ({y_{1}^{n-1}} ,v^{n-1}_+) + \int_{t^{n-1}}^{t^{n}} {\left < { {f_1
,v_h }   } \right >} dt+ \lambda _1 \int_{t^{n-1}}^{t^{n}} { { \left <g_{1h},v_h \right >}_{\Gamma}} dt,\label{full_discrete_y1}
\\
&&  (y_2^n,v^n) + \int_{t^{n-1}}^{t^{n}} {\left( { - \left\langle {y_{2h},v_{ht} } \right\rangle  + \epsilon_2 \left( \nabla{y_{2h},\nabla v_h } \right)
       }    \right)} dt - \int_{t^{n-1}}^{t^{n}} { {( {(cy_{1h}-d) y_{2h},v_h } ) } } dt\nonumber \\
&& \qquad   + \lambda _2\int_{t^{n-1}}^{t^{n}} { { \left <y_{2h},v_h \right >} _{\Gamma}} dt= ({y_{2}^{n-1}} ,v^{n-1}_+) + \int_{t^{n-1}}^{t^{n}} { { \left <f_2
,v_h \right >} } dt+ \lambda _2\int_{t^{n-1}}^{t^{n}} { { \left <g_{2h},v_h \right >} _{\Gamma}} dt,\label{full_discrete_y2}
\end{eqnarray}
\begin{eqnarray}
&&   -(\mu_{{1}+}^n,\upsilon ^n)+ \int_{t^{n-1}}^{t^{n}} {\left( {  \left\langle {{\mu}_{1h},v_{ht} } \right\rangle  + \epsilon_1 \left( {\nabla{\mu}_{1h},\nabla v_h } \right)
    - ({(a-by_{2h})\mu}_{1h},v_h )   }    \right) dt} + \lambda _1\int_{t^{n-1}}^{t^{n}} { { \left <\mu_{1h},v_h \right >} _{\Gamma}} dt\nonumber\\
&& \qquad\qquad    = -({\mu}_{1}^{n-1} ,v^{n-1}) + \int_{t^{n-1}}^{t^{n}} \left( {y}_{1h}-{y_{1,d}},v_h   \right)dt, \label{Beqn:2.6mu1bru}\\
&& -(\mu_{{2}+}^n,\upsilon ^n)+   \int_{t^{n-1}}^{t^{n}} {\left( {  \left\langle {{\mu}_{2h},v_{ht} } \right\rangle  + \epsilon_2\left( {\nabla{\mu}_{2h},\nabla v_h } \right)
    -((c{y_{1h}-d}){\mu}_{2h},v_h )   }    \right) dt} + \lambda _2\int_{t^{n-1}}^{t^{n}} { { \left <\magent{\mu}_{2h},v_h \right >} _{\Gamma}} dt\nonumber \\
&& \qquad \qquad    = -({\mu}_{2}^{n-1} ,v^{n-1}) + \int_{t^{n-1}}^{t^{n}} \left( {y}_{2h}-{y_{2,d}},v_h   \right)dt,\label{Beqn:2.6mu2bru}
\end{eqnarray}
 and the following optimality condition holds: For all $u_h \in L^2[0,T;L^2(\Gamma)]
 $,
\begin{equation} \label{eqn:3.6bru}
 \int_0^T \left\langle\gamma _1 g_{1h}  + \lambda_1 \mu _{1h} ,u _h \right\rangle_{\Gamma} dt  = 0,\,\, \int_0^T \left\langle\gamma _2 g_{2h}  + \lambda_2 \mu _{2h} ,u _h \right\rangle_{\Gamma} dt  = 0.
\end{equation}
\end{lem}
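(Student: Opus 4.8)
The plan is to obtain the system by assembling four ingredients: the defining relations of the discontinuous-in-time control-to-state map, existence and uniqueness of a discrete optimal pair, the differentiability of the reduced cost functional recorded in Lemma~\ref{lemmm:3.3}, and the first order necessary condition for minimisation over the \emph{unconstrained} admissible set $L^2[0,T;L^2(\Gamma)]\times L^2[0,T;L^2(\Gamma)]$. Equations (\ref{full_discrete_y1})--(\ref{full_discrete_y2}) are then nothing but (\ref{eqnnn:3.2})--(\ref{eqnnn0:3.3}) evaluated at the optimal control after renaming the test function, so no argument is needed there beyond recalling that $(y_{1h},y_{2h})=G_{h,R}(g_{1h},g_{2h})$. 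Existence of an optimiser follows by the direct method: a minimising sequence of controls is bounded in $L^2[0,T;L^2(\Gamma)]$ thanks to the coercive penalisation $\tfrac{\gamma_i}{2}\|g_{ih}\|^2$, the map $G_{h,R}$ is continuous, the slabs are finite dimensional in space and polynomial in time, and $J_h$ is weakly lower semicontinuous; uniqueness is the strict convexity already noted after the definition of the discrete control problem.

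The core of the derivation is the adjoint representation of $J_h'$. Let $z_{ih}=G_{h,R}'(g_{1h},g_{2h})(u_1,u_2)_i$ denote the discrete linearised state, which solves, on each slab $(t^{n-1},t^n]$, the linearisation of (\ref{eqnnn:3.2})--(\ref{eqnnn0:3.3}) with the cross coefficients $y_{2h}$, $y_{1h}$ frozen (as in the linearised systems introduced earlier), homogeneous initial value $z_{ih}^0=0$, and boundary source $\lambda_i\langle u_i,v_h\rangle_\Gamma$. By the chain rule $J_h'(g_{1h},g_{2h})(u_1,u_2)=\sum_{i=1,2}\big(\int_0^T(y_{ih}-y_{i,d},z_{ih})\,dt+\gamma_i\int_0^T(g_{ih},u_i)_\Gamma\,dt\big)$. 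Introducing $\mu_{ih}$ as the solution of the backward-in-time discrete adjoint problem of Lemma~\ref{lemmm:3.3} --- which is precisely (\ref{Beqn:2.6mu1bru})--(\ref{Beqn:2.6mu2bru}) with the terminal condition $\mu_{i+}^N=0$ --- I would test the linearised equation on slab $n$ with $\mu_{ih}$ and the adjoint equation on slab $n$ with $z_{ih}$, subtract, and sum over $n=1,\dots,N$. The diffusion, reaction and Robin mass terms cancel pairwise; the slab integrals of the time derivatives combine with the dG upwind/downwind jump terms into a telescoping sum that collapses because $z_{ih}^0=0$ and $\mu_{i+}^N=0$, leaving the discrete Green identity $\int_0^T(y_{ih}-y_{i,d},z_{ih})\,dt=\lambda_i\int_0^T(\mu_{ih},u_i)_\Gamma\,dt$. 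Substituting gives $J_h'(g_{1h},g_{2h})(u_1,u_2)=\sum_{i=1,2}\int_0^T(\gamma_i g_{ih}+\lambda_i\mu_{ih},u_i)_\Gamma\,dt$, the weights $\lambda_i$ being inherited from the fact that the control enters the state weak form only through the boundary pairing $\lambda_i\langle g_i,v\rangle_\Gamma$. Since the admissible set carries no pointwise constraint, optimality forces this linear form to vanish for every $(u_1,u_2)\in L^2[0,T;L^2(\Gamma)]$, which is exactly (\ref{eqn:3.6bru}).

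The step I expect to be the main obstacle is the discrete-in-time duality identity used above: one must keep track of the left and right limits of the piecewise polynomials at every node $t^n$, verify that the dG ``upwinding'' of the forward state equation and the ``downwinding'' of the backward adjoint equation are mutually transposed so that the jump contributions telescope, and check that freezing the coupling coefficients really makes the discrete adjoint operator the exact transpose of the discrete linearised one. This is routine for the lowest order ($k=0$) and linear ($k=1$) dG-in-time schemes considered here and mirrors the derivation indicated for the distributed case in Lemma~\ref{lem:3.4}; if in addition one wants $\mu_{ih}\in L^2[0,T;H^2(\Omega)]\cap H^1[0,T;L^2(\Omega)]$, this is recovered exactly as there, by reading (\ref{Beqn:2.6mu1bru})--(\ref{Beqn:2.6mu2bru}) as a Robin parabolic problem with right-hand side $y_{ih}-y_{i,d}\in L^2[0,T;L^2(\Omega)]$ and invoking elliptic regularity on the convex polygonal domain $\Omega$.
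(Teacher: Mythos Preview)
Your proposal is correct and follows exactly the ``known techniques'' route the paper invokes: the paper does not give a self-contained proof of Lemma~\ref{lemmm:3.4Lo} but simply refers to \cite{ChKa14} (and, for the analogous distributed Lemma~\ref{lem:3.4}, to \cite{Tr10}) for the standard Lagrangian/duality derivation of the discrete optimality system. Your write-up supplies the details the paper omits---the discrete Green identity obtained by testing the linearised state with $\mu_{ih}$ and the adjoint with $z_{ih}$, the telescoping of the dG jump terms under $z_{ih}^0=0$, $\mu_{i+}^N=0$, and the passage from $J_h'=0$ on the unconstrained set to \eqref{eqn:3.6bru}---and these are precisely the computations underlying the cited references.
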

In particular, given target functions ${y_{1,d}}(t,x_1, x_2)$ and
${y_{2,d}}(t,x_1, x_2)$,  we seek state variables $y_1(t,x_1, x_2)$, $y_2(t,x_1, x_2)$ and Robin boundary control variables $g_1(t,x_1, x_2)$, $g_2(t,x_1, x_2)$.
\section{Numerical Experiments}\label{sec:experiments}
The numerical examples are tested using the FreeFem++  software package Version 3.32, compiled in Linux OS, in a time interval $[0,T]$=$[0,0.1]$ and space $\Omega=[0,1]\times [0,1]$.
%
The state $y_1(t,x_1,x_2)$ represents the prey population density at time $t$, and $y_2(t,x_1,x_2)$ represents the predator population density both at time $t$ and position $(x_1,x_2)$.
We focus onto driving the state solution to desired targets $({y_{1,d}},{y_{2,d}})$ minimizing the quadratic functional $J(y_1,y_2,g_1,g_2)$ as it is described in (\ref{functional_full}) for $S=\Omega$ or $\Gamma$
subject to the constraints (\ref{LoD1})--(\ref{LoD2}) or (\ref{LoR1})--(\ref{LoR2}) respectively. Smooth initial conditions will be considered, 
\begin{figure}
   \centering
\scalebox{0.06}
    \centering
        \includegraphics[width=0.22\textwidth,trim={4cm 6cm 1.5cm  0cm},clip]{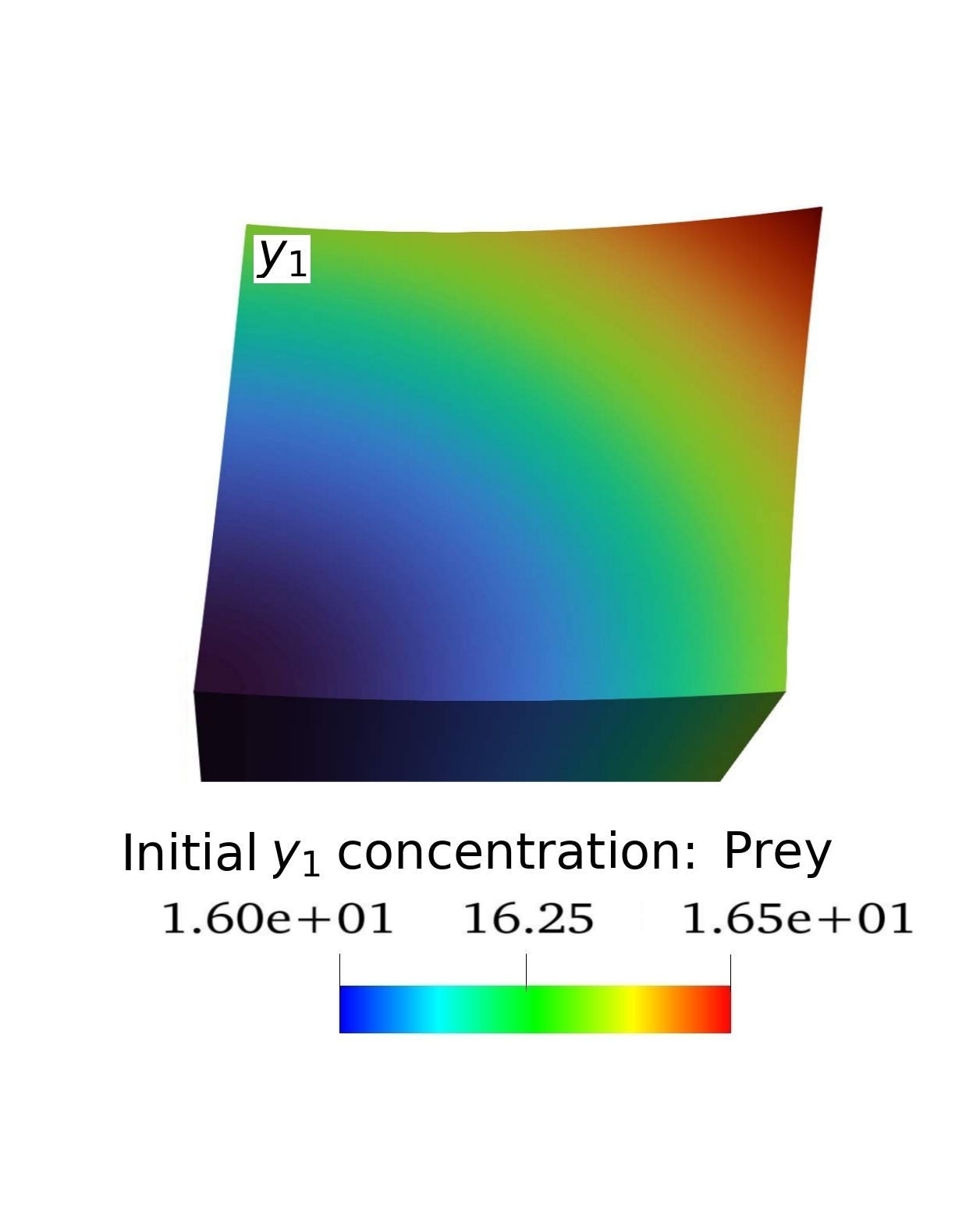}~
    \caption{\magent{The $y_1(0,x_1,x_2)$ smooth initial data, while $y_2(0,x_1,x_2)=25 $ everywhere in $\Omega$.}}\label{Figure_Lotka_smooth}
\end{figure}
$
 y_1(0, x_1, x_2) =  {y_{1,0}} = 16+0.25(x_1^2+x_2^2)
\, \text{ and }\, y_2(0,x) = {y_{2,0}} = 25 
{\text{ in }} \Omega,
$
see  Figure \ref{Figure_Lotka_smooth} for the prey initial concentration visualization, as well as, for some cases the nonsmooth/discontinuous initial conditions
%
%
%
\begin{figure}
   \centering
\includegraphics[width=0.22\textwidth,trim={4cm 6cm 1.5cm 7cm},clip,=]{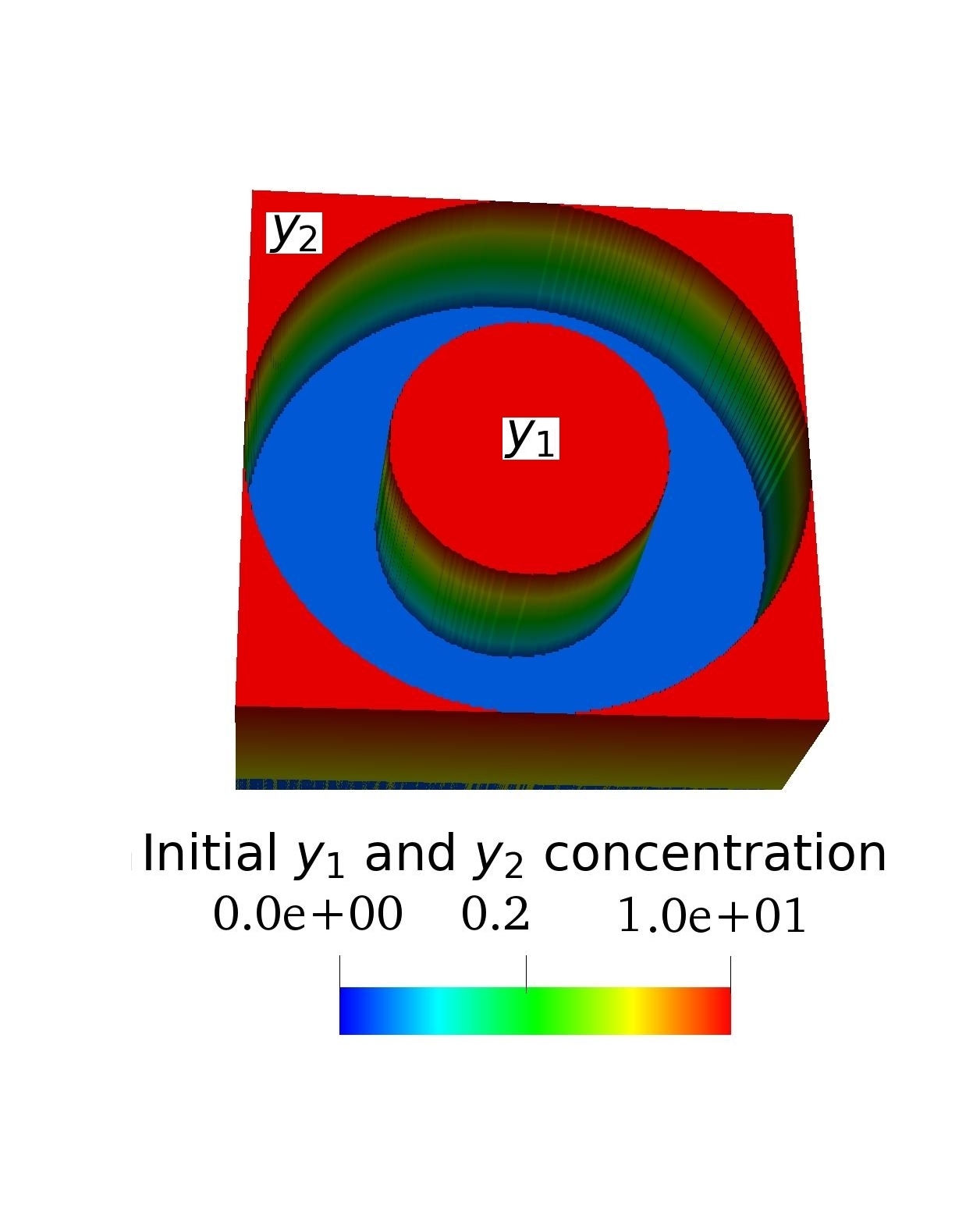}~
\includegraphics[width=0.22\textwidth,trim={4cm 6cm 1.5cm  7cm},clip]{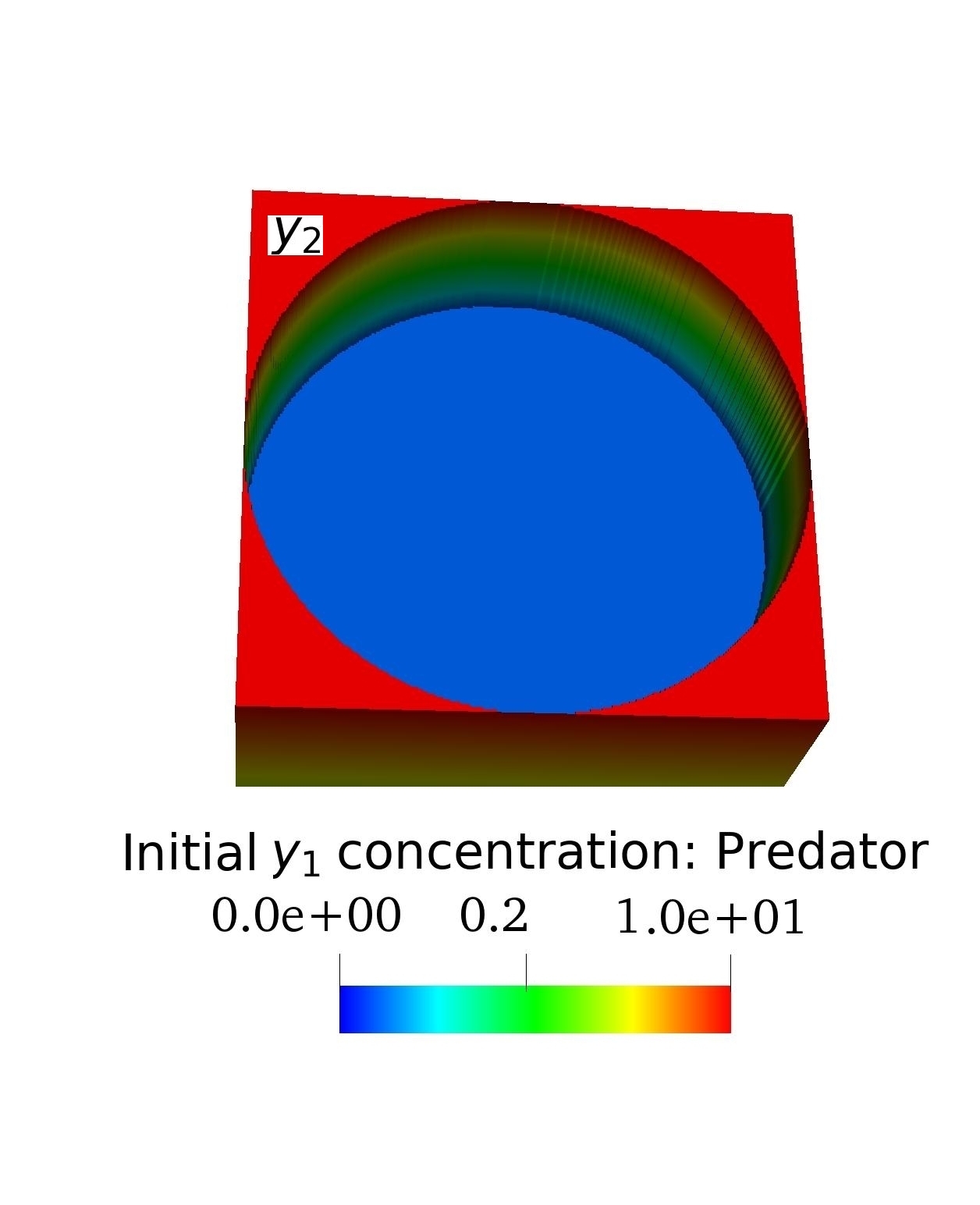}~
\includegraphics[width=0.22\textwidth,trim={4cm 6cm 1.5cm 7cm},clip]{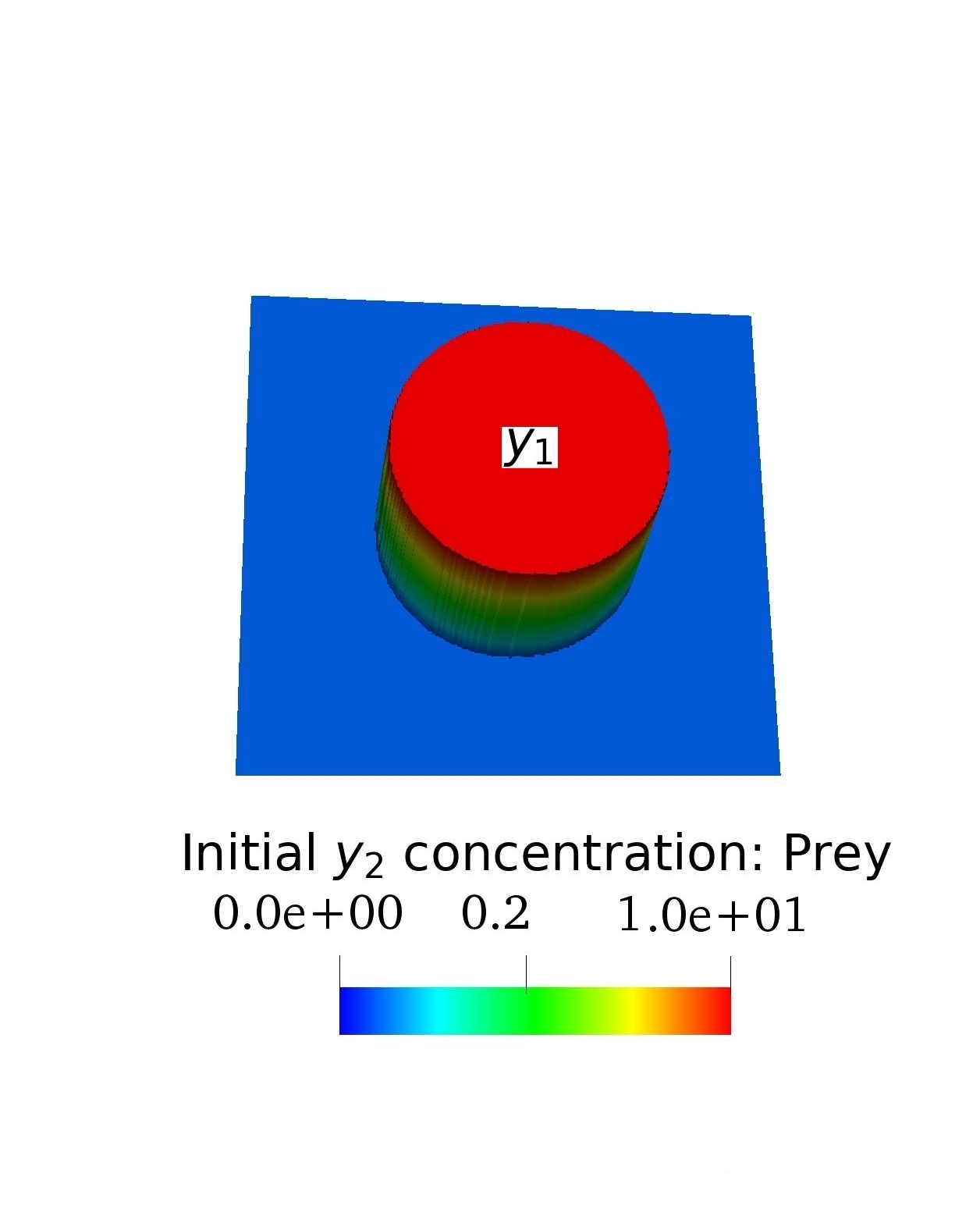}
\caption{\magent{The $y_1(0,x_1,x_2)$ and $y_2(0,x_1,x_2)$ rough initial data.}}\label{Figure_Lotka}
   \label{S_continouity2}
\end{figure}
%
\begin{eqnarray}\label{initialData:nonsmouth}
\begin{array}{l}
y_{1,0} = \left\{ {\begin{array}{*{20}{c}}
{10,\quad{\rm{ if }}{{(x_1 - 0.5)}^2} + {{(x_2 - 0.5)}^2} \le \frac{1}{16}}\\
1, \quad {\rm{otherwise}}
\end{array}}\right.,
\,\, y_{2,0} = \left\{ {\begin{array}{*{20}{c}}
{10,\quad{\rm{ if }}{{(x_1 - 0.5)}^2} + {{(x_2 - 0.5)}^2} \ge \frac{1}{4}}\\
1, \quad {\rm{otherwise}}
\end{array}} \right.
\end{array}
\end{eqnarray}
as seen in Figure \ref{S_continouity2}, 
\magent{ and with right hand forces
\begin{eqnarray*}
&&f_1=-(b\cos(t)\sin(t)\sin(\pi x)y^2sin(\pi y)+b \cos(t)\sin(t)x^2sin(\pi x) sin(\pi y)+64 b \sin(t)sin(\pi x) sin(\pi y)
\\
&&\qquad\, +\sin(t)y^2 -25 b\cos(t)y^2+64 b \sin(t)sin(\pi x) sin(\pi y)+\sin(t)y^2-25 b\cos(t)y^2 + a \cos(t) y^2
\\&&\qquad\,+\sin(t) x^2-25 b \cos(t) x^2+a \cos(t) x^2+4 \epsilon_1 \cos(t)-1600 b+64 a)/4,
\\
&&f_2=(c \cos(t) \sin(t) \sin(\pi x) y^2 \sin(\pi y)+c \cos(t) \sin(t) x^2 \sin(\pi x) \sin(\pi y) +64 c \sin(t) \sin(\pi x) \sin(\pi y)
\\
&&\qquad -8 \pi^2 \epsilon_1 \sin(t)  
\sin(\pi x) \sin(\pi y)-4 d \sin(t)\sin(pi*x)\sin(\pi y)-4 \cos(t)\sin(\pi x) \sin(\pi y)-25 c \cos(t) y^2
\\&&\qquad\,-25 c \cos(t) x^2-1600 c+100 d)/4.
\end{eqnarray*}}
Further, we enforce the target functions $(y_{1,d},y_{2,d})=(0,20)$ which physically means that we want the prey to vanish and the predator to dominate with a specific number. This makes sense e.g. in the case where the growth rate of $y_1$
is sluggish, 
the rate of $y_2$ killing $y_1$ is 
high, and the death rate of $y_1$ is high 
too with a sluggish death rate of 
$y_2$. 
We highlight that the uncontrolled system indicates that both species survive with such a cycle of survival as we will see later in the first snapshot of Figure \ref{phaseplanes}, see also  \cite[p.~160]{Ha97}, confirming the correctness of the target functions choice.
\newline
We assume the parameters $\epsilon_1=0.1$ (prey diffusion), $\epsilon_2=0.01$ (predator diffusion), $a=0.47$ (growth rate of prey), $b=0.024$ (searching efficiency/attack rate), $c=0.023$ (predator's attack rate and efficiency at turning food into offspring-conversion efficiency), $d=0.76$ (predator mortality rate). The diffusion parameter values signify that the prey diffuses faster than the predator. 
The aforementioned parameter values were obtained from a related ordinary differential equation model for the classic Hudson Bay Company Hare-Lynx example.
%
 \newline
The predator or the prey enters or exits in the domain $\Omega$ or the boundary $\Gamma$
in a controlled way, indicated by the distributed control or Robin boundary control  functions $g_i$, $i=1,2$. Especially for the nonsmooth initial data case, the predator density at the starting point is concentrated near the corners of the domain, while the prey density is intensive around the center, see Figure \ref{S_continouity2}. We note that the initial and boundary conditions may be difficult to pinpoint. 
Finally, we report that a  nonlinear gradient method with strong Wolfe-Powel conditions and Fletcher-Reeves directions is employed for determining the optimal control functions. 
\magent{The values of $\gamma_1$, $\gamma_2$ have been tested in the interval $(10^{-1}, 10^{-5})$, and best selection approved to be $\gamma_1=\gamma_2=0.01$, see also \cite[p.~218
]{Ka15} and references therein.}
%
\subsection{Distributed control case.}
This paragraph is dedicated to validating numerically that even if we force strict control constraints employing low order time polynomials --higher-order schemes are not applicable due to the irregularity that is caused by the control constraints-- either unconstrained control for low and higher-order time polynomials, the results and the solutions' distance from the target functions become efficiently small. In particular, 
we consider distributed control with and without control constraints, in cases with low and higher order time dG schemes respectively, and zero Dirichlet boundary conditions. Additionally, we report a qualitative study and the nullclines of the examined system. 
%
%

%
\subsubsection{Experiment A: control constraints $g_{ia}$, $g_{ib}$ and $k=0$, $l=1$.} We employ constant polynomials in time and linear in space. It is known that higher-order schemes are not applicable due to the irregularity that the control constraints cause to our system. We apply the control constraints $g_{ia}=0$ and $g_{ib}=0.1$. This choice is very strict considering that using the same data but with no control constraints we noticed that the control takes values in the interval $(0.9, 3.55)$. The related results are demonstrated in Table \ref{tau=h^2LoConstrained_g} showing clearly that the state variables $y_1$, $y_2$ are driven efficiently to the desired targets $y_{1,d}$, $y_{2,d}$.
\begin{table}
\caption{Distributed control: Distance from the target  for the 2d solution with $k=0$, $l=1$ ($\tau=\mathcal{O} (h^2) $) with smooth data and control constraints.}
\begin{center}
{\begin{tabular}{|c|c|c|
c|}
   \hline
   \multicolumn{1}{|c|}{Discretization} & \multicolumn{2
    }{|c|}{Distance from the target}&\multicolumn{1}{|c|}{Functional}    \\ \hline
     $\tau=h^2/8$  &${\left\| y_1-{y_{1,d}} \right\|}_{{L^2[0,T;{ L}^2 (\Omega )] }}$
     &$\left\| y_2-{y_{2,d}} \right\|_{{{L^2[0,T;{ L}^2  (\Omega )] }}}$
     &$J(y,g)$
     \\ \hline
    $h=0.23570$ &   {0.05672514712}&    {0.031267953860}&    {0.08799310282}
     \\ \hline
    $h=0.13888$ &   {0.01417057646}&    {0.004847915446}&    {0.01901849221}
    \\ \hline
    $h=0.05892$ &   {0.00354747615}&    {0.000908289532}&    {0.00445576574}
    \\ \hline
    $h=0.02946$ &   {0.00088619624}&    {0.000204350006}&    {0.00109054626}
   \\ \hline
 \end{tabular}}
\end{center}
\label{tau=h^2LoConstrained_g}
\normalsize
\end{table}
It is worth noting that we take similar results if we implement the code for the same data but with no control constraints and using $k=0$,  and $l=1$ polynomial degrees in time and in space respectively, although we omit these results for brevity. 
\subsubsection{Experiment B: unconstrained control and $k=1$,  $l=2$.}
In this experiment, in Table \ref{tau=h^2_g_uncon_k=1_l=2} the results for linear in time and quadratic in space polynomials have been reported.
\magent{We note that the
comparison of Table 1 ($k=0$, $l=1$) and Table 2 ($k=1$, $l=2$), indicate
%
%
%
%
%
%
%
%
%
%
%
 that it is not easy to further reduce significantly the functional even if we increase the order of polynomials in time and/or in space. 
We highlight that these difficulties appear in our opinion since the specific system consideration has to be controlled through both state variables $y_1$ and $y_2$, and the integration errors are accumulating especially in the cases of  higher order dG in time,  since e.g. for $k=1$ where the already quadruple optimal control system actually becomes eightfold. In our opinion, it is important though that the minimization functional is reduced even slightly. Similar behavior has been also depicted  in the next Robin boundary related Experiment E
 after comparing Tables 3 ($k=0$, $l=1$), Table 4 ($k=1$, $l=1$) and Table 5 ($k=1$, $l=2$) in paqragraph 5.2.2, see also \cite{ChKa14}.
 %
}
%
\begin{table}
\caption{Distributed control: Distance from the target  for the 2d solution with $k=1$, $l=2$ ($\tau=\mathcal{O} (h^2) $) with smooth data and without control constraints.}
\begin{center}
{\begin{tabular}{|c|c|c|
c|}
   \hline
   \multicolumn{1}{|c|}{Discretization} & \multicolumn{2
    }{|c|}{Distance from the target}&\multicolumn{1}{|c|}{Functional}    \\ \hline
     $\tau=h^2/8$  &${\left\| y_1-{y_{1,d}} \right\|}_{{L^2[0,T;{ L}^2 (\Omega )] }}$
     &$\left\| y_2-{y_{2,d}} \right\|_{{{L^2[0,T;{ L}^2  (\Omega )] }}}$
     &$J(y,g)$
     \\ \hline
    $h=0.23570$ &   {0.05914777260}&    {0.017063455880}&    {0.076212590230}
     \\ \hline
    $h=0.13888$ &   {0.01419612697}&    {0.003323664495}&    {0.017520155410}
    \\ \hline
    $h=0.05892$ &   {0.00351236326}&    {0.000785876982}&    {0.004298333324}
    \\ \hline
    $h=0.02946$ &   {0.00087479208}&    {0.000195581819}&    {0.001070397159}
   \\ \hline
 \end{tabular}}
\end{center}
\label{tau=h^2_g_uncon_k=1_l=2}
\normalsize
\end{table}
Concerning the distance between the state solution and the target function, we also notice that the predator concentration $y_2$ \red{is driven} closer to the target $y_{2,d}$ comparing it with the distance between $y_1$ and the target function $y_{1,d}$ and similar phenomenon we noticed for Experiment A.  Also, It is observed that the lower-density starting meshes have good results for the $y_1$ state. This is natural since the prey diffusion $\epsilon_1 = 0.1$ is much bigger than the predator diffusion $\epsilon_2 = 0.01$. In order to achieve 
better results, an idea could be to use a multi-scaling approach,  which would be investigated in future work. 
\subsubsection{Experiment C: Qualitative study and nullclines investigation.}
For a deeper understanding of the solutions' behavior, we demonstrate the system's nullclines and the phase planes, which also exhibit \magent{the way}  the control functions affect the state solutions. Furthermore, the latter indicates the appropriate choice of parameters for the numerical simulation of optimal solutions. It is informative we add the phase planes of the system to a graph and essential to find the confined set and draw the nullclines of the system, that is, the curves in the phase plane (the curves on which one of the variables is stationary), see e.g. \cite[p.~46-47,78]{Mu03} or \cite{GaTre07}. In the following, we demonstrate a 4-step study to find the fixed points, the stability of the fixed points, the nodes, spirals, vortices, or possible saddles. Particularly, we name the nonlinear terms
$$\phi_1 (y_1,y_2)=(a-by_2)y_1$$
 $$\phi_2 (y_1,y_2)=(cy_1-d)y_2,$$
 and we follow the procedure as described in the next steps: 

STEP 1: We solve the equations $\phi_1 (y_1,y_2)=0$ and $\phi_2 (y_1,y_2)=0$ and we see that the fixed points are $FP1=(0,0)$ and $FP2=(d/c,a/b)$.

STEP 2: For the fixed points stability,  we examine the matrix
 $$\Phi(y_1,y_2)=
\left[
  \begin{array}{cc}
    \frac{\partial \phi _1}{\partial y_1}  & \frac{\partial \phi _1}{\partial y_2} \\
    \frac{\partial \phi _2}{\partial y_1}  & \frac{\partial \phi _2}{\partial y_2}  \\
  \end{array}
\right]
=
\left[
  \begin{array}{cc}
    a-by_2  & -by_1 \\
    cy_2  &cy_1-d \\
  \end{array}
\right],
$$
with determinant ${\it\Delta}=(c y_1-d)(a-b y_2)+b c y_1 y_2$
and trace $T= -by_2+cy_1-d+a$.

For  $a=0.47$, $ b =0.024 $,
$c =0.023 $, $d =0.76 $,   we take for the $FP1$
$$\Phi(0,0)=
\left[
  \begin{array}{cc}
    0.47  & 0 \\
    0  &-0.76 \\
  \end{array}
\right],
$$
and for the $FP2$ 
$$\Phi(d/c,a/b)=\Phi(33.043,19.583)=
\left[
  \begin{array}{cc}
    0.0  & -0.793 \\
    0.450  &-1.110\times10^-16 \\
  \end{array}
\right].
$$
It is easy to see that the $FP2$ is a stable fixed point since
 the determinant of $\Phi(d/c,a/b)$ is positive and the trace of $\Phi(d/c,a/b)$ is negative, but  $FP1$ is unstable.

 STEP 3:  In this step, we study the nature of the fixed points. Let $\it \Delta$ be the determinant of $\Phi$ and
$T$ be the trace. The discriminant is
\begin{eqnarray*}
D &=& T^2 - 4{\it\Delta}\\
&=& b^2y_2^2+(-2bcy_1-2bd-2ab)y_2+c^2y_1^2\\
 & &+(-2cd-2ac)y_1+d^2+2ad+a^2.
\end{eqnarray*}
After some computations, we notice that the $FP2$ is a stable spiral fixed point since we observe two complex eigenvalues with negative real parts and occurs when $D < 0$. Solutions spiral into  that fixed
point are depicted in Figure \ref{phaseplanes}.
Moreover, the $FP1$ is a saddle point since $\it \Delta<0$ and
 consists of one positive and one negative eigenvalue thus it is unstable.

Hence, in Figure  \ref{phaseplanes},
\begin{figure}
\includegraphics[trim={0cm 0 1.15cm 0},clip,width=0.52\textwidth
]{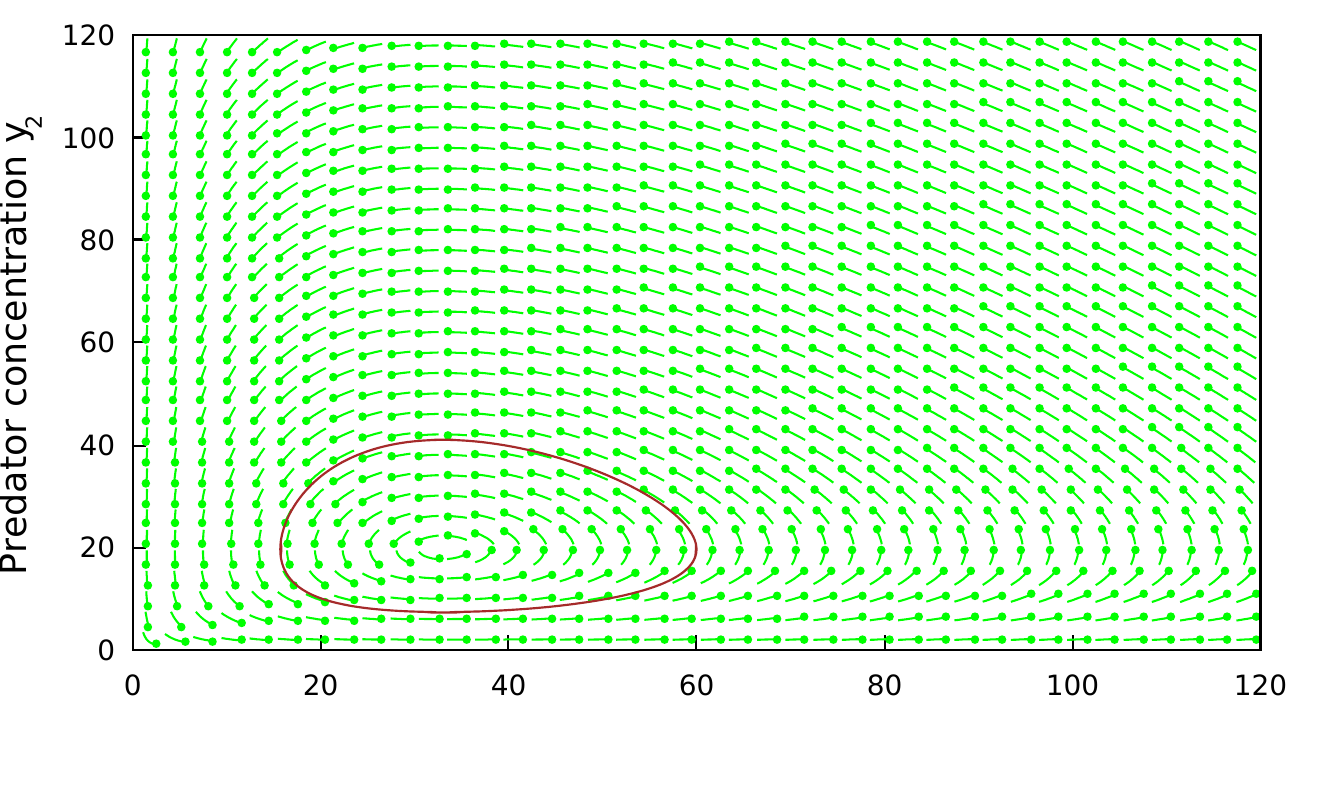}~\includegraphics[trim={0cm 0 1.15cm 0},clip,width=0.52\textwidth]{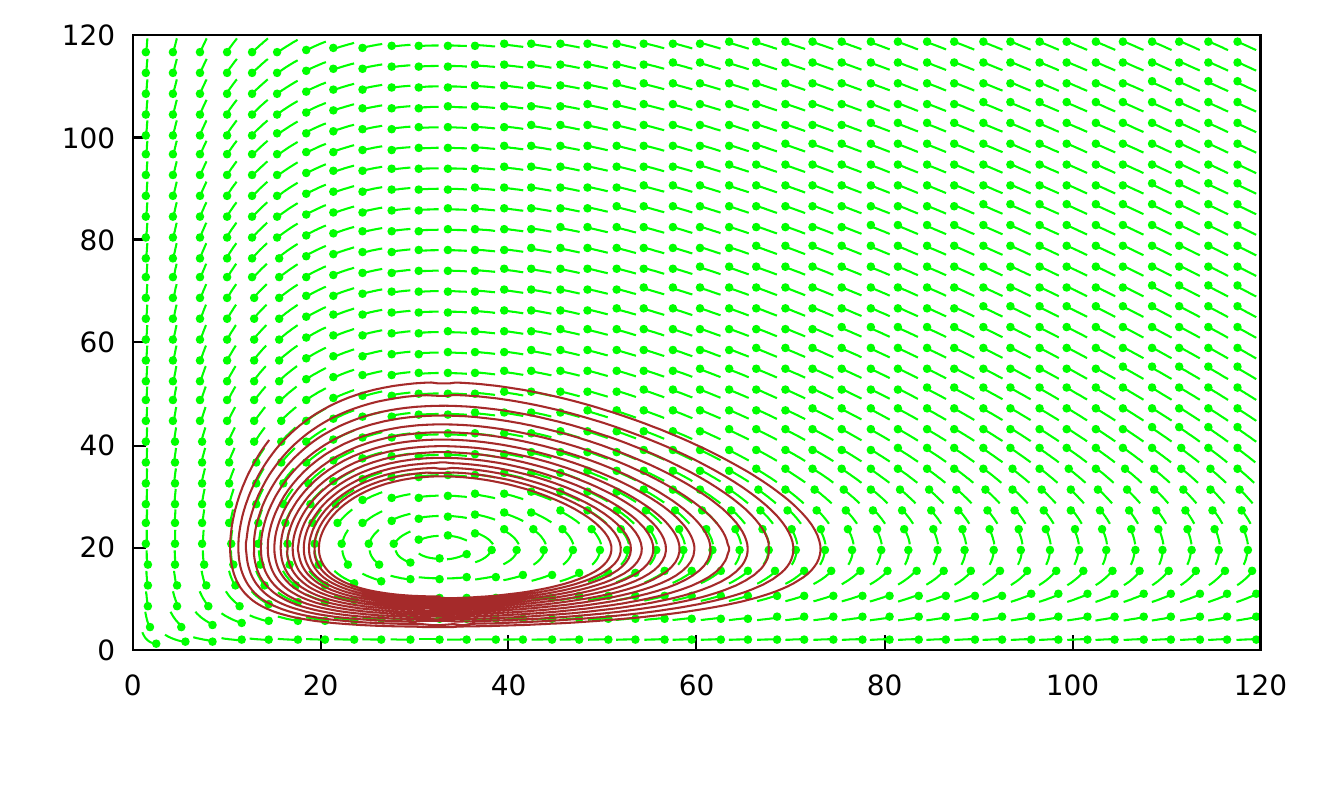}
\\
\includegraphics[trim={0cm 0 1.15cm 0},clip,width=0.52\textwidth]{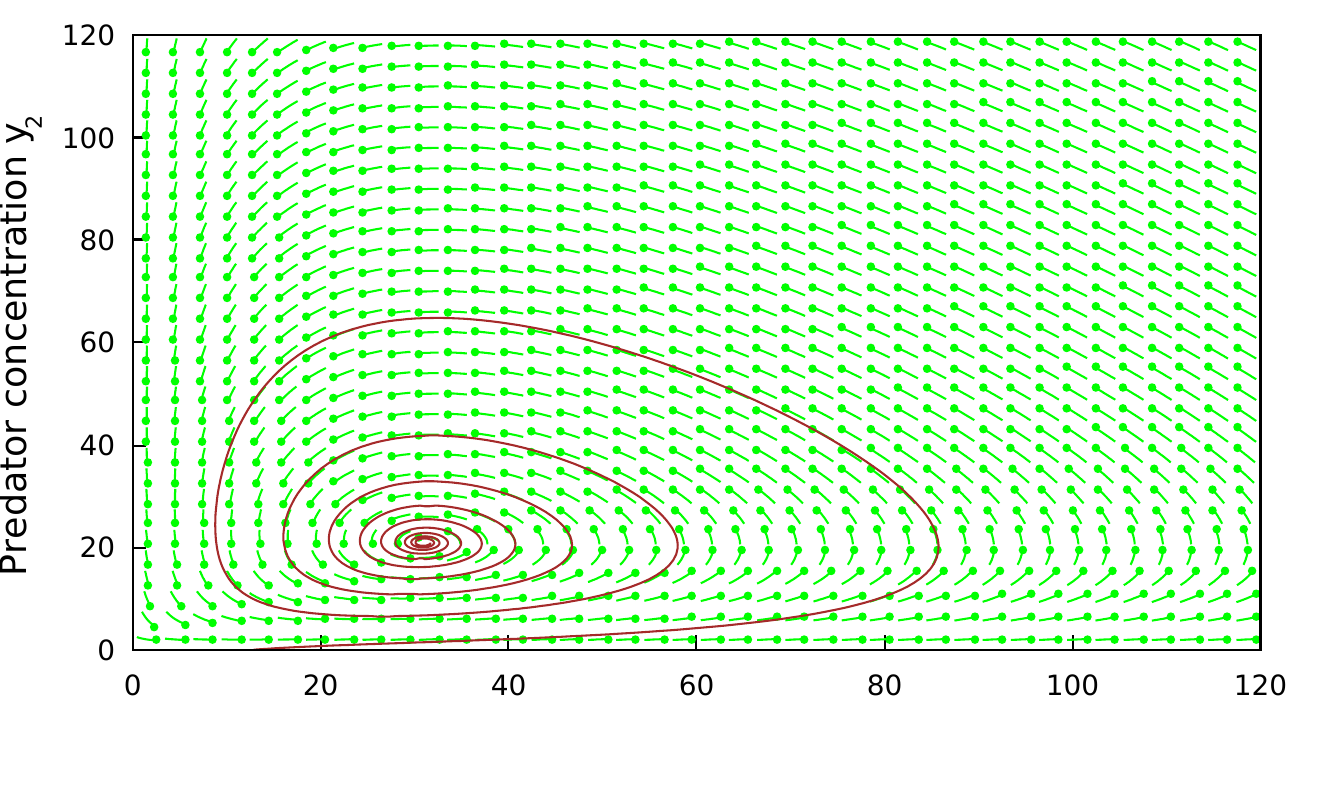}~\includegraphics[trim={0cm 0 1.15cm 0},clip,width=0.52\textwidth]{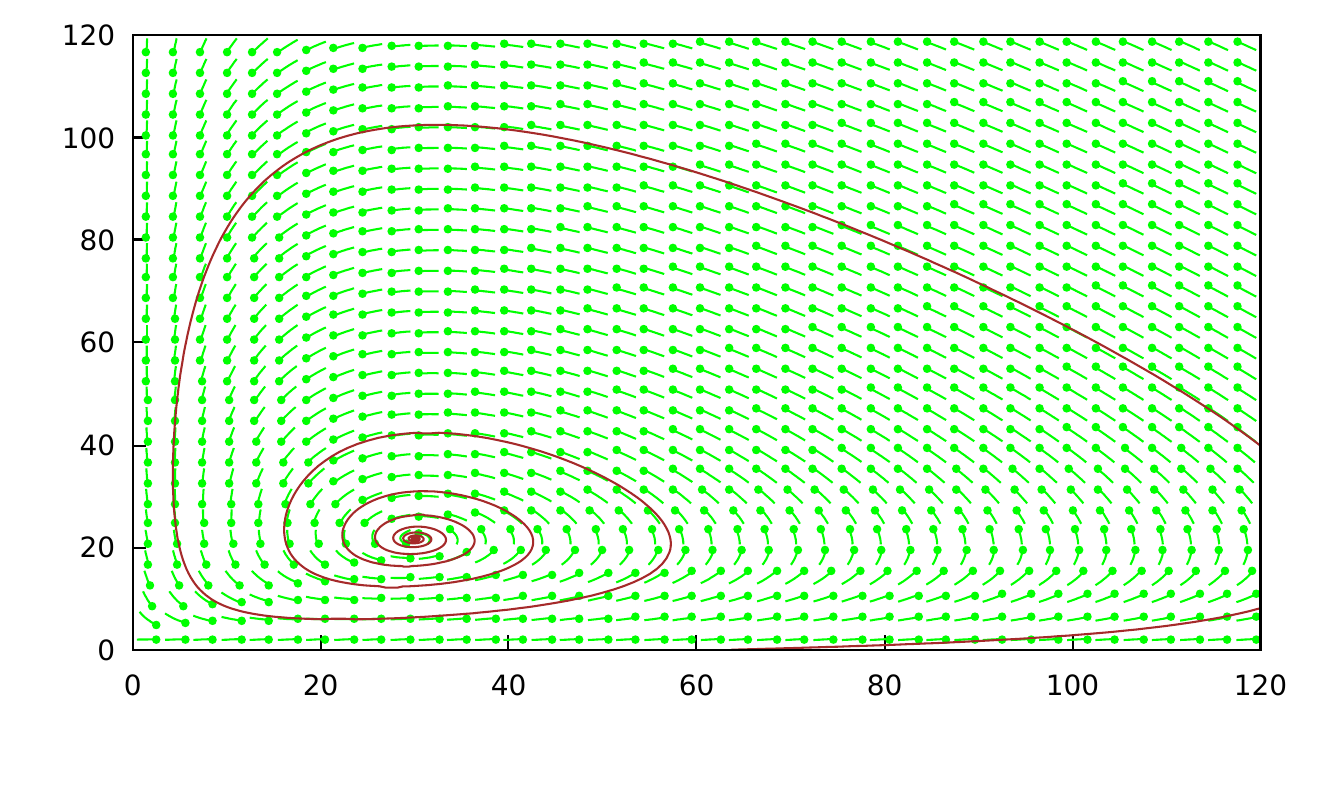}~
\\
\includegraphics[trim={0cm 0 1.15cm 0},clip,width=0.52\textwidth]{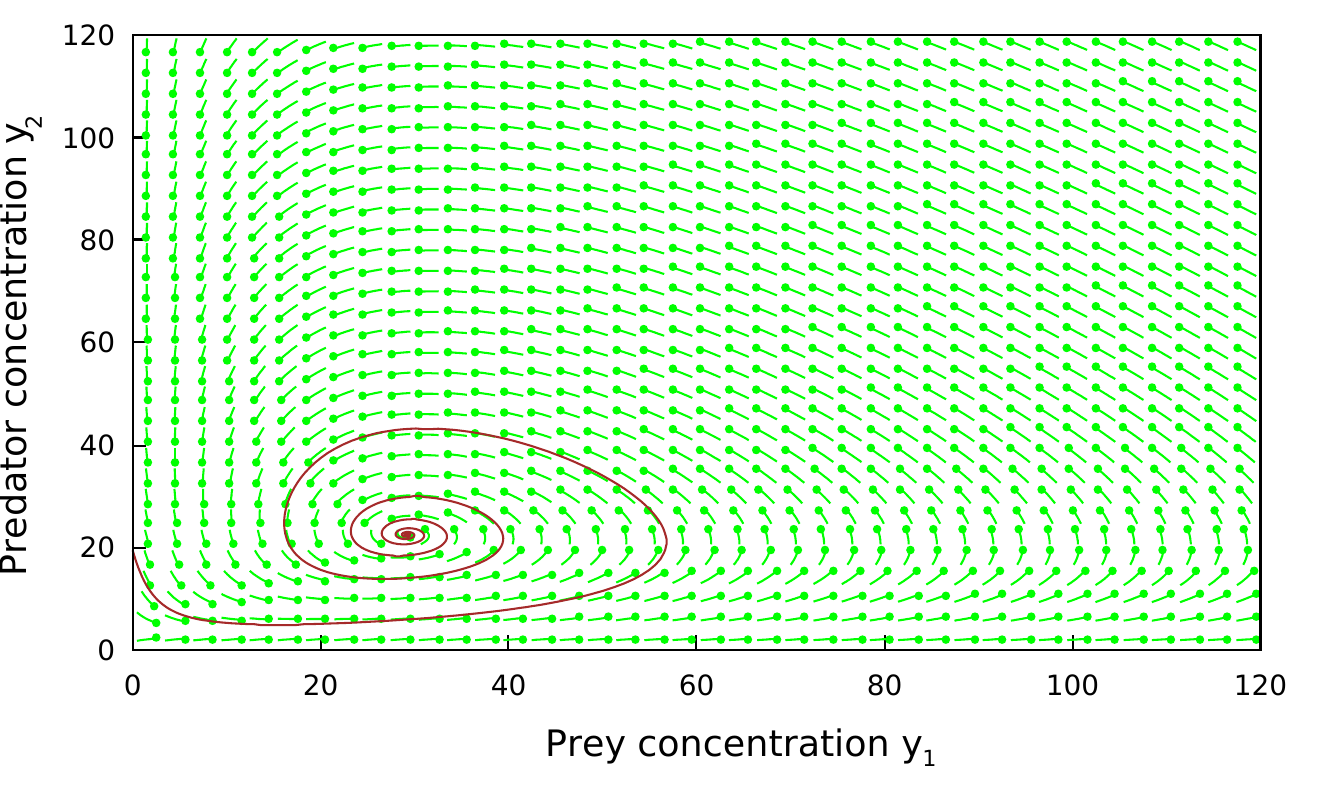}~
\includegraphics[trim={0cm 0 1.15cm 0},clip,width=0.52\textwidth]{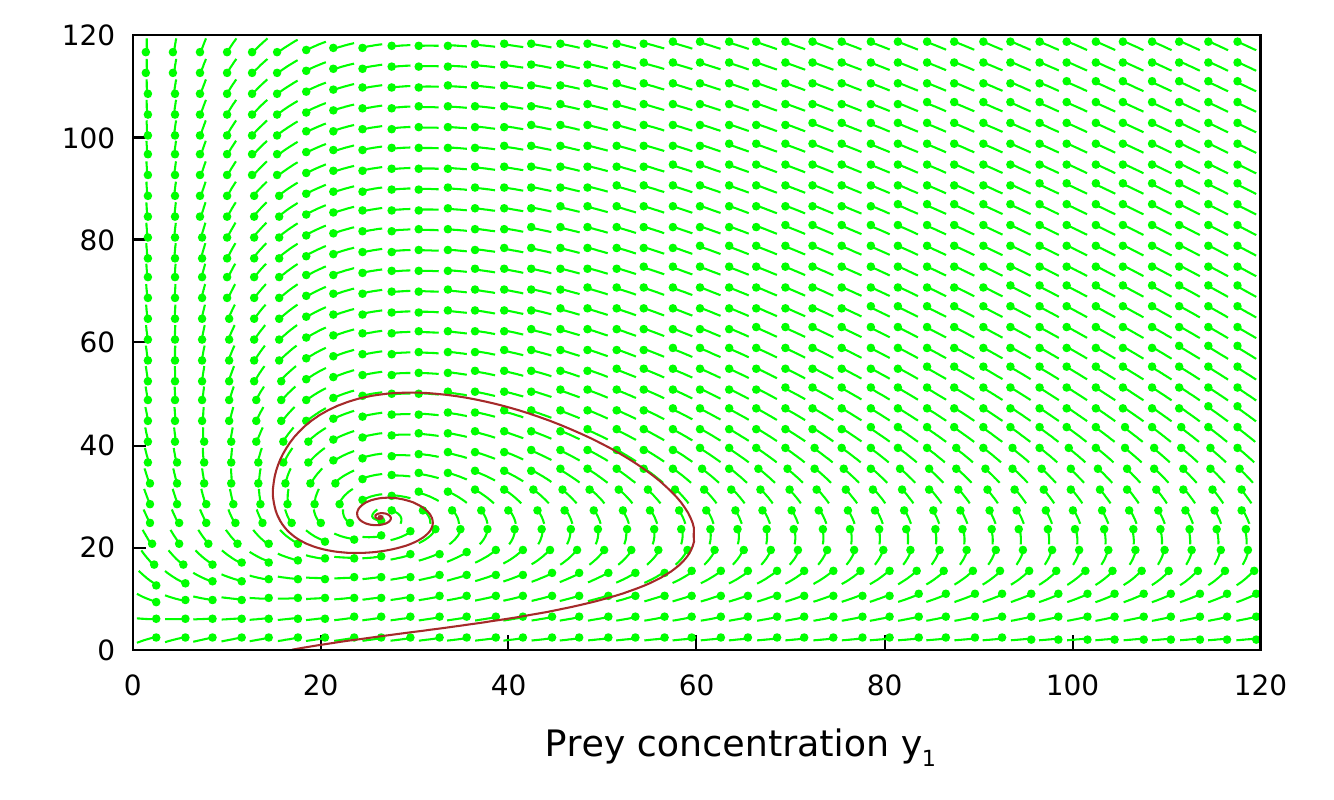}
    \caption{Phaseplanes for the $y_1(t,x,y)$ and $y_2(t,x,y)$ applying control $(g_1,g_2)=(0,0)$, $(0.1,0.1)$, $(1,1)$, $(1.5,1.5)
    $, $(2.5,2.5)$, $(4,4)$ and for parameters values: prey diffusion $\epsilon_1=0.1$, predator diffusion $\epsilon_2=0.01$, growth rate of prey $a=0.47$, searching efficiency/attack rate $b=0.024$, predator's attack rate and efficiency at turning food into offspring-conversion efficiency $c=0.023$, predator mortality rate $d=0.76$.}\label{phaseplanes}
\end{figure}
we demonstrate the direction of the $(y_1, y_2)$ trajectory for 
 control variable example values, and we draw the dynamics of the local kinetics for the point $(y_1,y_2)= (16.125,24)$, associated with the initial data for
$(x_1, x_2)=
(\frac{1}{2},\frac{1}{2})$, 
It is worth noting that we had similar trajectories for the test points $(y_1,y_2)= (16,25)$, $(16.125,24)$, $(16.25,25)$ and $(16.5,25)$.

The first visualization in Figure \ref{phaseplanes} corresponds to the uncontrolled case and is in agreement also to that in \cite[p.~114]{Jo09}. Following the second to sixth visualization in the aforementioned figure, we demonstrate some examples of how the control affects the dynamics of the system.
 \begin{figure}
    \centering
        \includegraphics[width=0.60\textwidth]{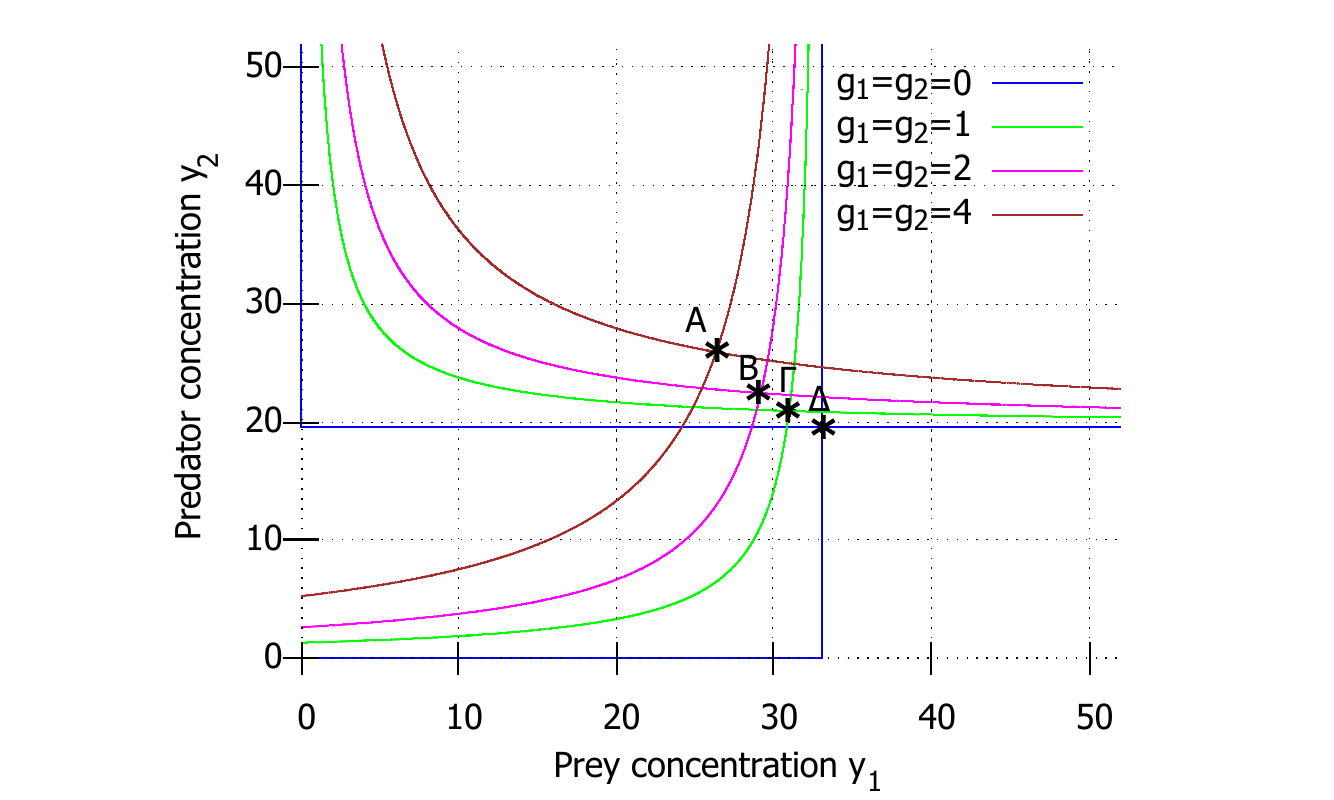}
   \caption{Nullclines for the $y_1(t,x,y)$ and $y_2(t,x,y)$ applying controls $(g_1,g_2)=(0,0)$, $(1,1)$, $(2,2)$, $(4,4)$  and for parameters: prey diffusion $\epsilon_1=0.1$, predator diffusion $\epsilon_2=0.01$, the growth rate of prey $a=0.47$, searching efficiency/attack rate $b=0.024$, predator's attack rate and efficiency on turning food into offspring-conversion efficiency $c=0.023$, predator mortality rate $d=0.76$.}\label{null_cline_Zoom}
\end{figure}
 Figure \ref{null_cline_Zoom} visualizes how the fixed point --an intersection of the nullclines for specific control values/intersection of lines with the same color-- varies in some cases where we apply control. It also depicts how the control function changes the system's dynamic. \magent{Moreover}, the FP2 fixed point, from the initial value (without control) $(33.043,19.583)$ changes \magent{positions}, namely to $(30.966,20.928)$, $(29.168,22.440)$, $(26.331,25.912)$ after applying some specific control values. Someone may also observe in the first column third snapshot, a typical example of how the control may drive the solution to the desired state $(0,20)$. 
\subsection{{{Boundary Robin control case.}}}\label{Experiments_Robin}
Related to this model problem,  we try to drive the solution to a desired target $({y_{1,d}},{y_{2,d}})$ minimizing the quadratic functional $J(y_1,y_2,g_1,g_2)$ described in (\ref{functional_full}) for $S=\Gamma$
subject to the constrains (\ref{LoR1})-(\ref{LoR2}), and the initial conditions $
 y_1(0,x)=  {y_{1,0}}
\quad y_2(0,x)=  {y_{2,0}} {\text{ in }} \Omega .
$
Again, the state  $y_1(t,x_1,x_2)$ represents the prey population density at time $t$ and $y_2(t,x_1,x_2)$ represents the
predator population density both at time $t$ and position $(x_1,x_2)$. 
In the case of the Robin boundary control of the Lotka-Volterra system, numerical tests indicate that it is more difficult to achieve reliable numerical experimental results compared with the zero Dirichlet boundary conditions and distributed control. Similar issues have been noticed in \cite{ChKa14, ChKa12}. 
In our opinion, this is caused by the outwards normal derivative on the boundary,  and even worse whether we apply boundary control. Subsequently, this phenomenon in the boundary affects also the solutions inside the domain and depends on the different diffusion properties. 
For the above reason, we need to use special tools for solving the optimal control problem. We employ the non-linear conjugate gradient methods for better results and best convergence properties and the Fletcher-Reeves directions together with the strong Wolfe-Powel conditions. These techniques have advantages such as being fast and with strong global convergence properties,  \cite{Dai-Yuan99}, \cite{DuaZhangMa16}, but they also have some disadvantages that we will be concerned about in the following.

\subsubsection{Experiment D: Constant in time ($k=0$), linear in space polynomials ($l=1$) and rough initial data.}\label{R1}
In this example, we demonstrate the test case of the discontinuous initial conditions $y_{1}(0,x_1,x_2)$, $y_{2}(0,x_1,x_2)$ as they have been visualized in Figure \ref{Figure_Lotka}.
%
%
For the fully discrete scheme, we use constant in time polynomials combined with linear in space. This is an appropriate choice in cases with nonsmooth data, see e.g. \cite{ChKa14}.
%
%
\magent{We apply discontinuous initial data as in (\ref{initialData:nonsmouth}),}  
 for  target functions $(y_{1,d},y_{2,d})=(0,20),$
that is the predator tends to dominate and the prey to vanish. 
We assume the parameters $\epsilon_1=0.1$, 
$\epsilon_2=0.01$, 
$a=0.47$, 
$b=0.024$, 
$c=0.023$, $d=0.76$ as in Experiments A, B, \magent{and for parameter values $\lambda_1=\lambda_2=1$}, while our problem evolves in the time interval $[0,T]=[0,0.1]$ in a square $\Omega=[0,1]\times[0,1]$. 
%
%
%
The results reported in Table \ref{Exp_tau=h^2Lo,k=0},
\begin{table}[ht]
\caption{
Robin control: Distance from the target applying $k=0$, $l=1$ ($\tau=\mathcal{O} (h^2) $) and rough data.}
\begin{center}
{\begin{tabular}{|c|c|c|
c|}
   \hline
   \multicolumn{1}{|c|}{Discretization} & \multicolumn{2
    }{|c|}{Distance from the target}&\multicolumn{1}{|c|}{Functional}    \\ \hline
     $\tau=h^2/2$  &${\left\| y_1-{y_{1,d}} \right\|}_{{L^2[0,T;{ L}^2 (\Omega )] }}$
     &$\left\| y_2-{y_{2,d}} \right\|_{{{L^2[0,T;{ L}^2  (\Omega )] }}}$
     &$J(y,g)$
     \\ \hline
    $h=0.23570$ &   {0.04265266201}&    {0.57806942650}&    {0.6335395815}
     \\ \hline
    $h=0.13888$ &   {0.00534571082}&    {0.10386568510}&    {0.1093306993}
    \\ \hline
    $h=0.05892$ &   {0.00130612821}&    {0.02562171016}&    {0.0269393861}
    \\ \hline
    $h=0.02946$ &   {0.00031866772}&    {0.00648286241}&    {0.0068040695}
   \\ \hline
 \end{tabular}}
\end{center}
\label{Exp_tau=h^2Lo,k=0}
\normalsize
\end{table}
show that the manipulation is effective and the related distance size from the target may be reduced spectacularly. 
As expected, the control functions make the system more efficient as the time-space mesh becomes denser.
\subsubsection{Experiment E: Linear in time ($k=1$), linear in space polynomials ($l=1$),  smooth data.} \label{R2}
 We switch, and we introduce higher order polynomials in time, enforcing the smooth initial conditions    $y_{1,0}=16+0.25(x_1^2+x_2^2),\,y_{2,0} = 25 
$ and the same as the previous experiment's target functions, considering Robin boundary control.
   The results are shown in i) Table \ref{Exp_tau=h^2Lo,k=1_l=1} using linear in time and space polynomials and ii) for linear in time and quadratic in space in Table \ref{Exp_tau=h^2Lo,k=1_l=2}  \magent{for parameter values $\lambda_1=\lambda_2=1$}. 
   \begin{table}[ht]
\caption{
Robin control:  Distance from the target  applying  $k=1$, $l=1$ ($\tau=\mathcal{O} (h^2) $) and smooth data.}
\begin{center}
{\begin{tabular}{|c|c|c|
c|}
   \hline
   \multicolumn{1}{|c|}{Discretization} & \multicolumn{2
    }{|c|}{Distance from the target}&\multicolumn{1}{|c|}{Functional}    \\ \hline
     $\tau=h^2/2$  &${\left\| y_1-{y_{1,d}} \right\|}_{{L^2[0,T;{ L}^2 (\Omega )] }}$
     &$\left\| y_2-{y_{2,d}} \right\|_{{{L^2[0,T;{ L}^2  (\Omega )] }}}$
     &$J(y,g)$
     \\ \hline
    $h=0.23570$ &   {0.4691621648}&    {0.06976235262}&    {0.5392965415}
\\ \hline
    $h=0.13888$ &   {0.0770592135}&    {0.01100635051}&    {0.1112083441}
    \\ \hline
    $h=0.05892$ &   {0.0182574659}&    {0.00242353097}&    {0.0206912036}
    \\ \hline
    $h=0.02946$ &   {0.0045402776}&    {0.00057613379}&    {0.0051184802} 
   \\ \hline
 \end{tabular}}
\end{center}
\label{Exp_tau=h^2Lo,k=1_l=1}
\normalsize
\end{table}
   \begin{table}[ht]
\caption{
Robin control: Distance from the target  applying  $k=1$, $l=2$ ($\tau=\mathcal{O} (h^2) $) and smooth data.}
\begin{center}
{\begin{tabular}{|c|c|c|
c|}
   \hline
   \multicolumn{1}{|c|}{Discretization} & \multicolumn{2
    }{|c|}{Distance from the target}&\multicolumn{1}{|c|}{Functional}    \\ \hline
     $\tau=h^2/2$  &${\left\| y_1-{y_{1,d}} \right\|}_{{L^2[0,T;{ L}^2 (\Omega )] }}$
     &$\left\| y_2-{y_{2,d}} \right\|_{{{L^2[0,T;{ L}^2  (\Omega )] }}}$
     &$J(y,g)$
     \\ \hline
    $h=0.23570$ &   {0.7186089436}&    {0.06769868694}&    {1.367070152}
\\ \hline
    $h=0.13888$ &   {0.0753016087}&    {0.00951411931}&    {0.084852089}
    \\ \hline
    $h=0.05892$ &   {0.0182633483}&    {0.00225891391}&    {0.020530010}
    \\ \hline
    $h=0.02946$ &   {0.0045402412}&    {0.00056394557}&   {0.005106144}
   \\ \hline
 \end{tabular}}
\end{center}
\label{Exp_tau=h^2Lo,k=1_l=2}
\normalsize
\end{table}
This approach allows \magent{slightly} 
better results regarding the distance from the target functions. 
\magent{We note that t}hese boundary conditions restrict the regularity in the boundary, so it is strongly not recommended to employ higher-order polynomials in space even if we have a smooth initial \magent{data}  due to the limited regularity near the boundary caused by the Robin boundary conditions. The associated results are reported in Table \ref{Exp_tau=h^2Lo,k=1_l=2}. One also should expect that the experiment with $k=1$, $l=2$ would fail, and almost does in the coarse stepping mesh. Furthermore, the functional obviously has almost three times larger values than in the case with $k=1$, $l=1$ in Table \ref{Exp_tau=h^2Lo,k=1_l=1} focusing on $h=0.23570$. Nevertheless, the parabolic regularity appears as the algorithm execution progresses and thus it is revealed beneficial. 
%
%

\magent{In \ref{Algo:Opt_Procedure}, we introduce some basic aspects of the implementation and the algorithm we used.}
\subsection{Conclusion}
We examined the Robin boundary and distributed control for a non-linear population model and we introduced the basic concepts of the first and second-order necessary conditions also in cases with control constraints. We demonstrated the related forward in time state and backward in time adjoint system, and a non-linear conjugate gradient method numerically verified that in each case we managed to manipulate the system to desired targets even in cases with rough initial data. We additionally demonstrated non-linear terms of the system qualitative analysis, extending and enriching the literature with the way that a control function changes the system's quality, as well as, indicating the proper physical parameter value choices. We conclude that both the distributed and Robin boundary control for Lotka-Volterra system cases drive efficiently the state variables to desired targets. Nevertheless, the first case seems more stable and with a more robust minimization of the functional, fact justified from the lower regularity in the Robin boundary consideration. These minimization convergence issues in the second case motivated us to employ a non-linear gradient method while its implementation confirmed the aforementioned aspects. As future work, we are planning to extend and investigate in an optimal control framework state-of-the-art methods like the arbitrarily shaped finite element dG method \cite{Ka23}, and/or unfitted mesh finite element methods possibly evolutionary in time and proper time discretization, \cite{ChKa14, KaKaTra23, ArKa22, ArKaKa22}. 
\section*{Acknowledgments}
This project has received funding from 
``First Call for H.F.R.I. Research Projects to support Faculty members and Researchers and the procurement of high-cost research equipment'' grant 3270 and was supported by computational time granted from the National Infrastructures for Research and Technology S.A. (GRNET S.A.) in the National HPC facility - ARIS - under project ID pa190902. 
We would like also to thank the contributors of the FreeFem++, \cite{He12}. 
\appendix
%
\magent{
\section{{Optimization procedure}}\label{Algo:Opt_Procedure}
{The algorithm is based on the steepest--descent/projected gradient method combined with Strong  Wolfe-Powel conditions and Fletcher-Reeves conjugate directions to ensure good convergence properties. Specifically, we consider the  Fletcher-Reeves conjugate direction as the search direction to compute the step for this direction. The step  $\epsilon _n$ is derived from a suitable linear search procedure. 
This approach makes us waste more computational resources in memory, but we can reduce significantly the number of iterations of the double iteration loop of the gradient method.
We highlight that we noticed similar behavior with the results of a simple gradient method, but we gain faster convergence. Next, we introduce  the strong Wolfe-Powel conditions with  $0<\rho\le \sigma <1$ and $d_{k+1}=-J'_{k+1}+\beta _{k+1} d_k$,   $d_0=-J'_k$:
 \begin{enumerate}
   \item $J(y_{k+1},g_{k+1})\le J(y_{k},g_{k})+\sigma \epsilon _k {J'^T}_k d_k$ ({Armijo rule}),
   \item $|{J'}_{k+1} d_k|\le -\rho {J'}_{k}d_k$,
 \end{enumerate}
with  $0<\rho\le \sigma <1$ and $d_{k+1}=-J'_{k+1}+\beta _{k+1} d_k$,   $d_0=-J'_k$
and choosing the Fletcher-Reeves conjugate directions
 : $\beta_k=\frac{J'^T_k J'_k} {\|J'_{k-1}\|^2}$. 
\newline
The pseudocode we used is:
\newline
    \begin{algorithm}[H]         \label{BruAlgorithm}     
{\DontPrintSemicolon
\floatname{algorithm}{\magent{Procedure}}
\caption*{\magent{Optimization approach with preconditioned CG/nonlinear gradient method with strong Wolfe-Powel conditions and Fletcher-Reeves directions.}}
       \magent{ \KwIn{$n=0$,  $\epsilon=1$,  $tol=10^{-5}$,  ${{g^0_i}}_\Gamma=1$, $y_i|_{t=0}$, $y_{id}$, $\sigma =0.1$, $\rho =0.9$, }
        \KwOut{Optimal triple: $(y_i, \mu_i, {g_i}_\Gamma)$}
        Solve: State problem for  ${{{g^0_i}}_\Gamma}$\\
        Compute $J^0$\\
        {{{\%}Main iteration}} \\
        \Repeat{$J(y^{n+1}_i,{{g^{n+1}_{i\,\,\Gamma}}})> J(y^{n}_i,{g^{n}_i}_\Gamma)+\sigma \epsilon  {{J'}^n}^T d^n_i$  
        {{{\%}{Armijo rule}}} }{
          Solve: Conjugate problem for  $y^{n-1}_i$\\
          Compute  $d^0_i=-J'^0=-\gamma_i {g^0_i}_\Gamma+{\mu^0_i}_\Gamma $\\
          {{{\%}Optimization iteration}} \\
          \Repeat{$|{{J'}^{n+1}}^T d^n|> -\rho {{J'}^{n}}^Td^n_i$} {
          {{{{\%}}{Fletcher--Reeves	conjugate directions}}}\\
            $\beta^n=\frac{{{J'}^n}^T J'^n} {\|J'^{n-1}\|^2}$  
            {{\%}$
{J'({g_i}_\Gamma)}^n=\gamma _i{g^n_i}_\Gamma+{\mu ^n_i}_\Gamma$}\\
            $d^{n+1}_i=-J'^{n+1}+\beta ^{n+1}_i d^n_i$  \\
            ${g^n_i}_\Gamma={g^{n-1}_{i\,\,\Gamma}}+\epsilon d^{n+1}_i$\\
            Solve: State problem for ${g^n_i}_\Gamma$\\
            $\epsilon=0.5*\epsilon$
        }
        \If{$|J^n - J^{n-1}|/J^n \le tol$}{
        Break
            }
         $\epsilon=1.5*\epsilon$
        }
}}
    \end{algorithm}
Where $i=1,2$ refers to the first and second species respectively and the projection step $\epsilon _n\equiv \epsilon$ is necessary since the direction term, it might not be admissible, see e.g. \cite{HeKu10}. 
%
As we can see from the results in Tables \ref{Exp_tau=h^2Lo,k=0}, \ref{Exp_tau=h^2Lo,k=1_l=1}, \ref{Exp_tau=h^2Lo,k=1_l=2} the algorithm is efficient and we can notice how the state solution distance from the target function for each variable and the quadratic functional reduces. Looking carefully at the results, the case with rough initial data is more demanding, while the quadratic functional values are larger in this case. 
%
}
}
%
%
%
\bibliographystyle{amsplain}


\end{document}